%

\documentclass{amsart}

\usepackage{amsmath,amsxtra,amssymb,amsfonts,latexsym, mathrsfs, dsfont,bm,mathtools}
\usepackage{latexsym}
\usepackage{bbm}
\usepackage[usenames]{color}
\usepackage[hyperindex]{hyperref}
\usepackage[initials]{amsrefs}
\usepackage{tikz}
\usepackage{comment}
\usepackage{color}
\usepackage{graphics,epsf}         
\usepackage{enumerate}

\newtheorem{theorem}{Theorem}[section]
\newtheorem{lemma}[theorem]{Lemma}

\newtheorem{definition}{Definition}[section]


\renewcommand{\q}{\quad}





\newcommand{\f}{\frac}

\newcommand{\p}{\partial}

\newcommand{\Id}{{\bf 1}}

\def\supp{{\text{\rm supp }}}

\newcommand{\bsb}{\boldsymbol}

\begin{document}

\title[The $L^p$ estimates]{Endpoint estimates for one-dimensional oscillator integral operators}

 \subjclass[2010]{Primary 42B20}
 \keywords{Newton Polygon, Oscillatory integral operator, resolution of singularities, van der Corput Lemma }
\date{\today}
\author{Lechao Xiao}
\address{ Department of Mathematics, University of Pennsylvania, Philadelphia, PA 19104, USA}
\email{xle@math.upenn.edu}
\begin{abstract}
The one-dimensional oscillatory integral operator associated to a real analytic phase $S$ is given by 
$$
T_\lambda f(x) =\int_{-\infty}^\infty e^{i\lambda S(x,y)} \chi(x,y) f(y) dy.
$$
In this paper, we obtain a complete characterization for the mapping properties of $T_\lambda $ on $L^p(\mathbb R)$ spaces, namely we 
prove that $\|T_\lambda\|_p \lesssim |\lambda|^{-\alpha}\|f\|_p$ for some $\alpha>0$ if and only if the point $(\f 1 {\alpha p} , \f 1 {\alpha p'})$ lies in the
reduced Newton polygon of $S$, and this estimate is sharp if and only if it lies on the reduced Newton diagram.  
\end{abstract}

\maketitle

\section{Introduction}\label{intro}
The very well-known and extremely useful tool in one-dimensional analysis is the following van der Corput lemma (see \cites{CCW99, ST93}):
\begin{lemma}\label{VD9}
For any real-valued function $u \in C^k(I) $ on some closed interval $I \subset \mathbb R$, if $|u^{(k)}(x)| \neq 0$ on $I$ $($assuming $u'$ is monotone when $k = 1$$)$, 
$$
\left|\int_I e^{i\lambda u(x)}dx\right | \leq C_k|\lambda|^{-\f 1 k}, \q \forall \lambda\,\,\in\mathbb R\,.
$$
\end{lemma}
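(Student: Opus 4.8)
The plan is to argue by induction on the order $k$, in the classical way: an integration by parts disposes of the case $k=1$, and a dyadic split around the (near-)zero of $u^{(k-1)}$ reduces order $k$ to order $k-1$. To make the induction close cleanly I would first replace the hypothesis ``$|u^{(k)}|\neq 0$ on $I$'' by the quantitative version ``$|u^{(k)}(x)|\geq\mu$ on $I$'' for a constant $\mu>0$, and correspondingly aim for the bound $|\int_I e^{i\lambda u}\,dx|\leq C_k(\mu|\lambda|)^{-1/k}$ with $C_k$ depending on $k$ alone. Since $u^{(k)}$ is continuous and nonvanishing on the compact interval $I$, such a $\mu$ exists, so this recovers Lemma~\ref{VD9}.

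\emph{Base case $k=1$.} Writing $e^{i\lambda u}=(i\lambda u')^{-1}\frac{d}{dx}e^{i\lambda u}$ and integrating by parts, the boundary term is at most $2(\mu|\lambda|)^{-1}$ because $|u'|\geq\mu$, while the remaining term equals $|\lambda|^{-1}\int_I|(1/u')'|\,dx$. Here the hypothesis that $u'$ is monotone enters: it forces $1/u'$ to be monotone and of constant sign with $|1/u'|\leq\mu^{-1}$, so $\int_I|(1/u')'|\,dx=|1/u'(b)-1/u'(a)|\leq\mu^{-1}$. Thus $C_1=3$ works.

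\emph{Inductive step.} Assume the claim for order $k-1\geq 1$ and suppose $|u^{(k)}|\geq\mu$ on $I$. By continuity $u^{(k)}$ has constant sign, hence $u^{(k-1)}$ is strictly monotone and vanishes at most once on $I$. Let $c\in I$ be a point at which $|u^{(k-1)}|$ is smallest --- a zero of $u^{(k-1)}$ if one exists, an endpoint of $I$ otherwise --- and, for a parameter $\delta>0$ to be chosen, split $I$ into $I\cap(c-\delta,c+\delta)$ and its complement. The first piece contributes at most its length $2\delta$. The complement is a union of at most two subintervals on each of which $|u^{(k-1)}(x)|\geq\mu\delta$; this follows from the fundamental theorem of calculus applied from $c$ (using $u^{(k-1)}(c)=0$ when $c$ is a zero, and the constant sign of $u^{(k-1)}$ when $c$ is an endpoint). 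On each such subinterval the induction hypothesis applies --- when $k-1=1$ the required monotonicity of $u'$ is automatic, since $u''=u^{(k)}$ has constant sign --- and contributes $2C_{k-1}(\mu\delta|\lambda|)^{-1/(k-1)}$. Adding the two contributions and choosing $\delta=(\mu|\lambda|)^{-1/k}$ to balance them gives the asserted bound with $C_k=2+2C_{k-1}$, which closes the induction.

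\emph{On the difficulty.} This is a textbook lemma, so there is no deep obstacle; the points that need care are: (i) propagating the quantitative lower bound faithfully through each reduction, in particular the degenerate case where $u^{(k-1)}$ does not vanish in $I$, in which $c$ must be taken to be the correct endpoint and the sign of $u^{(k-1)}$ tracked; (ii) verifying that the $k=1$ monotonicity hypothesis reappears automatically at the bottom of the recursion; and (iii) carrying out the $\delta$-optimization so that the final constant depends on $k$ only, and not on $|I|$ or $\lambda$. One should also keep in mind that the estimate is informative only when $|\lambda|$ is large enough that $(\mu|\lambda|)^{-1/k}$ falls below the trivial bound $|I|$.
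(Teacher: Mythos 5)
The paper does not prove Lemma~\ref{VD9}; it is stated as a well-known result with references to \cite{ST93} (Stein's book) and \cite{CCW99}. Your argument is correct and is essentially the classical proof that appears in \cite{ST93}: an integration by parts handles $k=1$ (with the monotonicity of $u'$ controlling the total variation of $1/u'$), and the inductive step uses the strict monotonicity of $u^{(k-1)}$ to excise a $\delta$-neighborhood of its minimizer, estimate the complement by the order-$(k-1)$ bound with effective lower bound $\mu\delta$, and then optimize $\delta=(\mu|\lambda|)^{-1/k}$ to close the recursion $C_k=2+2C_{k-1}$. All the points you flag as needing care --- tracking the quantitative bound, the degenerate case where $u^{(k-1)}$ never vanishes and $c$ is an endpoint, and the automatic recovery of the $k=1$ monotonicity hypothesis from the constant sign of $u^{(k)}$ --- are handled correctly, and the constant depends only on $k$ as required. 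There is nothing to compare against in the paper beyond the citation, so nothing further to say.
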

This estimate shares the following two remarkable features: sharpness (the decay is best possible) and uniformity/stability   
(the constant $C_k$ depends only on the lower bound of $|u^{(k)}|$ in $I$ but not on other assumptions concerning $u$).  
In several variables (we mainly focus on two variables here) one has the following (see \cite{ST93}):
suppose $\chi\in C_0^{\infty}(\mathbb R^2)$ and $S$ is a real-valued function so that for some
$(k,l)\in \mathbb N^2$ not equal to $(0,0)$, 
\begin{align}\label{UI1}
\left|\f{\partial^{k+l}S}{\p_x^k \p_y^l}(x,y)\right| \neq 0 \q{\rm for\,\, all } \q (x,y)\in\supp \chi. 
\end{align} 
Then 
\begin{align}\label{VD01}
\left |\iint_{\mathbb R^2} e^{i\lambda S(x,y)}\chi(x,y) \, dxdy \right | \leq C(S) \cdot |\lambda|^{-\frac 1 {k+l}}.  
\end{align}
However, this two dimensional analogue is less satisfying for the estimate is neither sharp in general 
 (consider $S(x,y)= x^k y^l$, for example) nor uniform ($C(S)$ depends on higher derivatives of the phase).   
 There has been significant interest in the harmonic analysis literature to develop a robust
and general theory of high-dimensional oscillatory integrals that shares the above two features.  
However, progress on this problem has been slow, 
because, among many other reasons, the singularities of the phases involved may themselves be substantially more complicated.
Much progress has been made in the feature of uniformity/stability. 
Results in this category include \cites{CCW99, CW02, PSS01, PSS99, CLTT05, GR8, GR11,GRB11, KA86}. 
The goal of the present paper is trying to understand the other feature, namely the feature of sharpness. More precisely, 
we are interested in an oscillatory integral model that is intrinsically associated to \eqref{UI1}, 
in the sense its sharp decay rate estimates are almost equivalent to the assumption \eqref{UI1}, 
but restrict ourself to analytic phases.

Through out the rest of this paper, $\chi$ denotes a function belonging to $C_0^{\infty}(\mathbb R^2)$ and the phase $S$ is 
 real analytic in $\supp \chi$, in the sense $S$ is locally equal to its Taylor expansion in $\supp \chi$.  
The subjects under consideration are the $1+1$ dimensions oscillatory integral operator 
\begin{align}\label{mo}
T_\lambda f(x) =\int_{-\infty}^{\infty} e^{i\lambda S(x,y)} \chi(x,y) f(y) dy.
\end{align}
We are interested in decay rates (as $\lambda\to \pm \infty$ ) for the norm of $T_\lambda$ as an operator that maps $L^p(\mathbb R)$ into itself for all $p\geq 1$.  
For convenience, we use $\|T_\lambda\|_p$ to denote this norm and use $p'$ to denote the conjugate exponent of $p$, 
that is $\f 1 {p'} =1 -\f 1 p$.  
The $k=0$ (or $l=0$) cases are uninteresting, for $S(x,y)$ can be 
a function of $y$ alone ($x$ alone res.) and
decay estimates for $\|T_\lambda\|_p$ may not exist for any $p$.  
Henceforth, through out the rest of this paper, we assume $k$ and $l$ are both in $\mathbb N_* = \mathbb N\backslash \{0\}$.  
Our first result states that the assumption (\ref{UI1}) is indeed sufficient to 
obtain a sharp $L^p$-estimate for $T_\lambda f$.    
\begin{theorem}\label{M1}
If \eqref{UI1} holds for some $(k, l)\in\mathbb N_*^2$,
then there is a constant $C$ independent of $\lambda$ such that 
\begin{align}\label{KL0}
\|T_\lambda\|_{\frac{k+l}{k}}  \leq C |\lambda|^{-\f 1 {k+l}}. 
\end{align}
\end{theorem}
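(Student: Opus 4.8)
The plan is to reduce, by compactness and a finite partition of unity, to the situation where $\chi$ is supported in an arbitrarily small square $Q$ on which $\partial_x^k\partial_y^l S\geq c>0$; since $\|T_\lambda\|_{p\to p}=\|T_\lambda^*\|_{p'\to p'}$ and $T_\lambda^*$ is an operator of the same form with the roles of $(x,k)$ and $(y,l)$ exchanged, we may also assume $l\geq k$, so that the target exponent $p=\frac{k+l}{k}$ satisfies $p\geq 2$. The object that controls everything is the mixed Hessian $H:=\partial_x\partial_y S$: because $\partial_x^{k-1}\partial_y^{l-1}H=\partial_x^k\partial_y^l S\geq c$, the analytic function $H$ vanishes to order at most $k+l-2$ at every point of $Q$, so its zero set is a finite union of analytic arcs, and after a resolution of that zero set $Q$ is covered by finitely many families of (possibly curved) dyadic boxes $R$, of dimensions $a_R\times b_R$, on each of which $|H|\sim\mu_R$ for a dyadic number $\mu_R$ and, say, $H$ is monotone in $x$.

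On each box I would record three estimates for the localized piece $T_R$: the trivial $\|T_R\|_{1\to1}\lesssim a_R$ and $\|T_R\|_{\infty\to\infty}\lesssim b_R$ obtained by integrating the kernel, and the $L^2$ bound $\|T_R\|_{2\to2}\lesssim(|\lambda|\mu_R)^{-1/2}$ from H\"ormander's nondegenerate phase estimate in its uniform form (its constant depending only on the lower bound $\mu_R$ for $|H|$ and on the monotonicity arranged above); when $|\lambda|\mu_R\lesssim 1$ one simply keeps the trivial bounds. Interpolating the $L^2$ bound against the $L^\infty$ bound gives, for $p=\frac{k+l}{k}$, a clean per-box estimate $\|T_R\|_{p\to p}\lesssim(|\lambda|\mu_R)^{-1/p}b_R^{\,1-2/p}$ (and $\|T_R\|_{p\to p}\lesssim a_R^{1/p}b_R^{1/p'}$ on the non-oscillatory boxes). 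The remaining task is to sum $T_\lambda=\sum_R T_R$ over the $\sim\log|\lambda|$ relevant Hessian-scales.

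This last step is the heart of the matter and the main obstacle, and it is exactly why $p=\frac{k+l}{k}$ is an \emph{endpoint}: adding the per-box bounds by the triangle inequality over all scales produces the borderline sum $\sum_R 1$ and hence a spurious factor of $\log|\lambda|$. To remove it one must use orthogonality. Boxes $R$ at a fixed Hessian-scale have pairwise disjoint $x$-projections, so their contributions add in $\ell^p$; boxes at different scales but over the same $x$-interval produce outputs oscillating at dyadically separated $x$-frequencies, so a Littlewood--Paley decomposition lets the corresponding pieces be combined in $\ell^2$ rather than in $\ell^1$. Feeding the per-box $L^p$ estimates into this $\ell^p$-over-space, $\ell^2$-over-frequency bookkeeping, splitting off the non-oscillatory range at $|\lambda|\mu\sim1$, and summing the resulting (now genuinely convergent) geometric series, the arithmetic closes precisely at $p=\frac{k+l}{k}$ and yields $\|T_\lambda\|_{p\to p}\lesssim|\lambda|^{-1/(k+l)}$ with no logarithmic loss. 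Getting the orthogonality and the scale bookkeeping to conspire so that the endpoint sum converges is the one genuinely delicate point; the reductions and the per-box estimates are routine.
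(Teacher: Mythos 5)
Your setup — localize, take $l\geq k$ so $p=\frac{k+l}{k}\geq 2$, resolve the zero set of $H=S''_{xy}$ into curved dyadic boxes on which $|H|\sim\mu_R$, record $L^1$, $L^\infty$ and Hörmander $L^2$ bounds per box, and interpolate — mirrors the paper's reduction and handles what the paper calls the \emph{minor case}. The gap is in the step you correctly flag as the heart of the matter, and the mechanism you propose there does not work. You claim that boxes at different Hessian scales over the same $x$-interval produce outputs at dyadically separated $x$-frequencies, so that a Littlewood--Paley decomposition upgrades the $\ell^1$ sum over scales to $\ell^2$. First, this frequency separation is not present for a general analytic phase: the outputs $T_R f(x)$ carry oscillation governed by $\partial_x S(x,y)$ evaluated on the $y$-support of the box, and nothing forces these ranges to be dyadically disjoint as the scale varies. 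Second, and more fatally, even if you had perfect orthogonality it would not close the endpoint. The paper works through the model $S''_{xy}=y^{l-1}$ (its ``first major case'', $k=1$, $n=0$): dividing $\supp\chi$ into $y\sim 2^{-j/(l-1)}$ and interpolating the oscillation estimate with the trivial bound at $p=l+1=\frac{k+l}{k}$ gives a bound $|\lambda|^{-1/(l+1)}$ \emph{for every $j$}, with no geometric decay whatsoever. Summing $\sim\log|\lambda|$ equal-size contributions in $\ell^2$ still yields a $\sqrt{\log|\lambda|}$ loss; orthogonality alone cannot rescue this.

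The paper's point is precisely that at the endpoint one must abandon the dyadic-in-$|H|$ decomposition in the two exceptional (``major'') configurations it identifies from the resolution algorithm: (i) $k=1$ with $S''_{xy}$ vanishing to highest order along the $x$-axis, and (ii) $k=1$ with $S''_{xy}\sim(y-\gamma(x))^{l-1}$ along an analytic arc $\gamma$. For (i) it substitutes Zygmund's lifting trick (Lemma~\ref{INT001}) to transfer an $L^2$ bound — proved for a \emph{truncated} nondegenerate operator via a $TT^*$ argument controlled by the Hardy--Littlewood maximal function, following Phong--Stein--Sturm, rather than a naive Schur test — to the target $L^{(l+1)/l}$ exponent. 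For (ii) it runs a complex interpolation between Phong--Stein's damped $L^2$ estimates and an $H^1_E\to L^1$ estimate built on an atomic decomposition adapted to the phase. These two devices, not a frequency-orthogonality argument, are what remove the logarithm; your proposal is missing them and would at best reproduce the $\log$-lossy estimates already in \cite{PSS01}.
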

Given the natural essence of this estimate and the fact its scalar analogue \eqref{VD01} is well-known,     
it is somewhat surprising that (\ref{KL0}) hasn't been obtained before. 
Indeed, \eqref{KL0} is significantly deeper than (\ref{UI1})
and its proof is highly non-trivial, involving various ideas and techniques from many previous works. 
The sharpest results in light of \eqref{KL0} to date, to the author's best knowledge, are due to Phong, Stein and Sturm \cite{PSS01}, 
where they obtained almost sharp estimates (with a power of $\log |\lambda| $ loss) for polynomial phases; see the discussion below.   
In addition, 
the estimate \eqref{KL0} also strictly improves upon Phong and Stein's seminal work \cite{PS97}, concerning sharp $L^2$-estimates, in the sense
\eqref{KL0} is the endpoint/extreme point estimates that can be used to 
interpolating their results; see Theorem \ref{COM}. While Varchenko's estimate \cite{VAR76}, 
namely the scalar analogue of Phong and Stein, can not be obtained from (\ref{UI1}) due to the fact that (\ref{UI1}) is not sharp in general.   
Moreover, the results in Theorem \ref{M1} are complete, in the sense they 
provides all possible estimates for $\|T_\lambda\|_p$ that one can obtain basing on assumptions like (\ref{UI1}).
To illustrate this point, notice first there are two trivial estimates $\|T_\lambda\|_\infty \leq C$ and $\|T_\lambda\|_1 \leq C$. 
Interpolating among (\ref{KL0}) and them yields estimates (\ref{KL0})
for all pairs of real numbers $(k',l')\geq^* (k,l)$. Here the partial order $(A,B)\geq^* (a,b)$ indicates $A\geq a$ and $B\geq b$ simultaneously.  If we set 
$\mathcal A =\{(k,l)\in\mathbb N_*^2\,\, :\,\, (\ref{UI1}) \}$, then by interpolation, the estimates (\ref{KL0}) hold for all pairs of real numbers $(k',l')$ belonging to the convex hull
of the union of all positive quadrants attached to each element in $\mathcal A$. 
This convex hull is essentially the largest set one can establish estimates like \eqref{KL0}. That is to say the ``inverse'' of Theorem \ref{M1} is almost true. 
However, it is a subtle issue 
when certain mix-derivatives of $S$
vanish at some point(s) on the boundary of $\supp \chi$. To avoid this subtlety,  
it is more appropriate to phrase our results locally.  

For each $(x_0,y_0)\in \supp \chi$, $S(x,y)$ is locally equal to its Taylor expansion, i.e.  
there is a neighborhood of $(x_0,y_0)$ on which 
\begin{align}\label{phase01}
S(x,y) = \sum_{p,\, q\geq 0}c_{p,q}(x-x_0)^p(y-y_0)^q.
\end{align}
Let $\phi\in C_0^{\infty}(\mathbb R^2)$ be supported in a sufficiently small neighborhood $U$ of $(x_0,y_0)$
which is non-vanishing at $(x_0,y_0)$, 
 and let $T_{0}$ denote the corresponding localization of $T_\lambda$, i.e. the operator given in (\ref{mo}) with the cut-off $\chi$ replaced by $\phi$. 
We use $\mathcal N^*(S)$ to denote the {\it reduced} Newton polygon associated to $S$ at $(x_0,y_0)$, i.e. the convex hull of the union of all quadrants $[p,\infty)\times [q,\infty)$ with $c_{p,q}\neq 0$ and $(p,q)\in\mathbb N_*^2$. 
The pairs $(p,0)$ and $(0,q)$ are irrelevant to any  $\|T_0\|_p$ as mentioned above.  
The boundary of $\mathcal N^*(S)$, denoted by $\mathcal D^*(S)$, is called the reduced Newton diagram of $S$ at $(x_0,y_0)$. 
The natural connection between the operator $T_0$ and the 
Newton polygon of the phase is captured by the following theorem. 
\begin{theorem}\label{COM}
Assume $\alpha>0$. 
Under the above setting, 
$
\|T_0\|_p \leq  C|\lambda|^{- \alpha}
$
 iff $(\f 1 {p\alpha}, \f 1 {p'\alpha})\in \mathcal N^*(S)$ 
 and this estimate is sharp iff $(\f 1 {p\alpha}, \f 1 {p'\alpha})\in \mathcal D^*(S)$.
\end{theorem}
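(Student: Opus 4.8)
The strategy is to reduce the two-sided characterization in Theorem \ref{COM} to the sufficiency statement of Theorem \ref{M1} together with a matching collection of lower bounds. First I would establish the "if" direction. The reduced Newton polygon $\mathcal N^*(S)$ is, by definition, the convex hull of the positive quadrants attached to the vertices $(p,q)$ with $c_{p,q}\neq 0$ and $(p,q)\in\mathbb N_*^2$. After a resolution-of-singularities decomposition of the support of $\phi$ into finitely many curved dyadic (or ``horn-shaped'') regions adapted to the Newton polygon — the standard Phong--Stein--Sturm machinery — on each piece the phase behaves, after a monomial change of variables and a further localization, like a model phase to which \eqref{UI1} applies for an appropriate pair $(k,l)$ lying on or above a face of $\mathcal D^*(S)$. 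Theorem \ref{M1} then gives $\|T_{\text{piece}}\|_{(k+l)/k}\lesssim |\lambda|^{-1/(k+l)}$, i.e. the estimate $\|T_{\text{piece}}\|_p\lesssim|\lambda|^{-\alpha}$ with $(\tfrac1{p\alpha},\tfrac1{p'\alpha})=(k,l)$. Interpolating each piece's estimate against the trivial $L^1$ and $L^\infty$ bounds, and then summing the (geometrically decaying, by the choice of decomposition) contributions, yields $\|T_0\|_p\lesssim|\lambda|^{-\alpha}$ for every $(\tfrac1{p\alpha},\tfrac1{p'\alpha})$ in the convex hull, that is, in $\mathcal N^*(S)$.

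For the ``only if'' direction I would argue by contraposition using explicit test functions. If $(\tfrac1{p\alpha},\tfrac1{p'\alpha})\notin\mathcal N^*(S)$, then there is a supporting line of the polygon separating it from this point; the line corresponds to a quasi-homogeneous weight $(a,b)$ under which some monomial $c_{p_0,q_0}x^{p_0}y^{q_0}$ with $(p_0,q_0)\in\mathbb N_*^2$ is dominant on a sub-region. Choosing $f$ to be (a smooth bump times) the indicator of a rescaled box $[0,\lambda^{-a}]$ in $y$ and testing $T_0 f$ against the indicator of a dual box $[0,\lambda^{-b}]$ in $x$, on which the phase $\lambda S(x,y)$ is $O(1)$ so there is no cancellation, produces a lower bound $\|T_0\|_p\gtrsim|\lambda|^{-\alpha_0}$ with $\alpha_0$ strictly smaller than $\alpha$ (precisely, $(\tfrac1{p\alpha_0},\tfrac1{p'\alpha_0})$ sits on the relevant face); hence $\|T_0\|_p\leq C|\lambda|^{-\alpha}$ fails. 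The same construction, applied when the point lies \emph{on} $\mathcal D^*(S)$, shows the estimate cannot be improved, giving the sharpness equivalence: if $(\tfrac1{p\alpha},\tfrac1{p'\alpha})$ is interior to $\mathcal N^*(S)$ one can slightly enlarge $\alpha$ (still staying in the polygon) and invoke the ``if'' direction, so the estimate is \emph{not} sharp; if it lies on $\mathcal D^*(S)$ the test-function lower bound matches the exponent.

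The main obstacle is the ``if'' direction, specifically making the resolution-of-singularities decomposition genuinely uniform and ensuring that the local model on each piece really does reduce to a situation governed by \eqref{UI1} with the right $(k,l)$. Two points require care: (i) the subtlety flagged in the introduction, that certain mixed derivatives of $S$ may vanish on the boundary of the support — this is why the statement is phrased locally and why $\phi$ must be supported in a sufficiently small neighborhood, so that on each resolved piece a single face of the Newton diagram dominates and the nondegeneracy \eqref{UI1} holds throughout the (slightly shrunk) support; (ii) controlling the number of pieces and the off-diagonal/error terms generated by the change of variables, so that summing over the decomposition loses at most a constant (or an absolutely summable geometric factor), not a power of $\log|\lambda|$ — this is exactly where the present analytic-phase argument must improve on the polynomial-phase result of \cite{PSS01}. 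I expect the analyticity of $S$ to be used precisely here, to guarantee that the Newton data stabilizes after finitely many blow-ups and that the remainders are honestly negligible.
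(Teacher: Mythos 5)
There is a genuine gap. Your ``if'' direction invokes Theorem~\ref{M1} on each resolved piece, but in the paper the logical dependence runs the other way: Theorem~\ref{M1} is \emph{derived} from Theorem~\ref{COM} by a partition of unity, so using it here is circular. More substantively, your key summation step --- ``interpolating each piece's estimate against the trivial $L^1$ and $L^\infty$ bounds, and then summing the (geometrically decaying, by the choice of decomposition) contributions'' --- is exactly the step that fails at the endpoints. The paper's whole point is that there are two ``major'' exceptional situations in which this geometric decay is absent, and they cannot be wished away by refining the resolution of singularities. The first occurs when the relevant Newton vertex is $(0,l-1)$ (i.e.\ $k=1$, the good region at stage $n=0$); the model $S''_{xy}=y^{l-1}$ shows that after interpolating the size bound and the oscillation bound at the critical exponent $p=l+1$, each dyadic shell $\{y\sim 2^{-j/(l-1)}\}$ contributes a bound $\sim|\lambda|^{-1/(l+1)}$ \emph{with no $j$-dependence}, so summing over $j$ produces a $\log|\lambda|$ factor, exactly as in \cite{PSS01}. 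The second occurs at higher stages ($n\ge1$, $\nu=0$) where $S''_{xy}\sim(y-\gamma(x))^{l-1}$ along an analytic curve $\gamma$, with the same loss. Analyticity ensures the resolution algorithm terminates and the number of good regions is finite, which controls everything \emph{except} this lack of decay in $j$; so your attribution of the log-removal to analyticity-driven stabilization of Newton data misdiagnoses the obstruction.

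The paper closes this gap with two entirely different mechanisms, neither of which appears in your sketch. For the first major case it uses Zygmund's lifting trick (Lemma~\ref{INT001}) to reduce the endpoint $L^{l+1}$ bound to a sharp $L^2$ bound for a \emph{truncated} nondegenerate oscillatory integral operator, which is then proved by a $TT^*$ argument whose kernel is dominated via the Hardy--Littlewood maximal function rather than a Schur test --- this is precisely what eliminates the would-be logarithm. For the second major case it embeds $T_0$ in an analytic family $U_z$ with a damping/weighting factor $(|u-y|+|\lambda|^{-1/(l+1)})^z$ and interpolates between a damped $L^2$ estimate at $\mathrm{Re}\,z=\tfrac{l-1}{2}$ (Phong--Stein damped operators) and an $H^1_E\to L^1$ estimate at $\mathrm{Re}\,z=-1$ built from phase-adapted atoms. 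Your ``only if'' and sharpness arguments via test functions supported on bi-dyadic boxes dual to supporting lines of the Newton polygon do match the paper's Section~\ref{Red}, so that part of the proposal is sound.
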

The content of this theorem can be depicted by the graph below. The reduced Newton diagram (the thickened line segments) 
is the set of critical points that one has sharp decay estimates. 
 Residing on its right (the gray area) is the set of points $(\f 1 {p\alpha}, \f 1 {p'\alpha})$
for non-sharp estimates of  $\|T_0\|_p \leq  C|\lambda|^{-\f 1 \alpha}$. 
On its left (the white area) 
is the set of points $(\f 1 {p\alpha}, \f 1 {p'\alpha})$ that the estimate $\|T_0\|_p \leq  C|\lambda|^{-\f 1 \alpha}$ cannot hold.  
\begin{center}
\begin{tikzpicture}[scale=0.8]
\draw [<->,thick] (0,9) node (yaxis) [above] {$y$}
        |- (9,0) node (xaxis) [right] {$x$};
     \fill[gray!15] (2,9)--(2,5)--(3,2)--(5,1)--(9,1)--(9,9);
       \fill (5,1)   circle (2pt) (3,2) circle (2pt) (2,5)  circle (2pt);
        \draw [ultra thick]	(9,1)--
        			(5,1) --
			(3,2) --
			(2,5) -- 
			(2,9);
	\node at (6,5) {All possible Estimates};

\end{tikzpicture}
\end{center}

\vspace{0.2in}

For convenience, we refer Theorem \ref{M1} as the global estimates and Theorem \ref{COM} as the local estimates.  
These two estimates are essentially equivalent.   
It is quite obviously
that Theorem \ref{M1} implies the sufficient part of the first conclusion of Theorem \ref{COM}, given the support of $\phi$ is sufficiently small. 
On the other hand, 
the global estimates can also be obtained from the local estimates via an argument of smooth partition of unity.  
Indeed, for each $(x_0,y_0)\in\supp \chi$ and 
each smooth cut-off $\phi$ supported in a sufficiently small neighborhood of it, 
Theorem \ref{COM} yields $\|T_0\|_p \leq C|\lambda|^{-\f 1 \alpha}$, provided the pair $(\f 1 {p\alpha},\f 1 {p' \alpha})$ 
lying in the reduced Newton polygon of $S$ at $(x_0,y_0)$. 
The assumption \eqref{UI1} guarantees that $(k,l)$ is one such pair 
and that \eqref{KL0} holds for each $T_0$ associated to each $(x_0, y_0)$.
 Theorem \ref{M1} is then obvious 
for $\chi$ is compactly supported. 
Therefore, it suffices for us to establish Theorem \ref{COM}, which will be accomplished via interpolation. 
Once sharp estimates were established for all endpoints, namely for all $(p,\alpha)$ such that 
$(\f 1 {p\alpha}, \f 1 {p'\alpha})$ is a vertex of $\mathcal N(S)$, all other estimates in Theorem \ref{COM}
will follow from interpolating among these endpoints and the two trivial estimates mentioned above.


Oscillatory integrals of the form (\ref{mo}) and related objects 
have attracted considerable interest during the last half century. 
Besides its intrinsic interest, the decay rate of $\|T_\lambda\|_p$ is closely related to the regularity of Radon transforms;
see \cite{PS94-2,SEE99, Y04} and the references there. 
When $(k,l) =(1,1)$, 
the phase is non-degenerate
and (\ref{KL0}) corresponds to the classical results of H\"ordmander \cite{HOR73}. 
The general degenerate cases are substantially more complicated. 
Sharp estimates for the scalar integral \eqref{VD01} were established by Varchenko \cite{VAR76} in 1976 for arbitrary analytic phases, given $\chi$ has sufficiently small support. 
He showed that the decay is characterized in terms of the Newton polygon of the phase\footnote{under an appropriate coordinate system, known as the adapted coordinates}, 
namely the so-called Newton distance, 
confirming an early hypothesis of V. I. Arnold. 
From this view of point, (\ref{UI1}) alone is not sufficient to capture the sharp decay of (\ref{VD01}). 
In another word, (\ref{VD01}) is not a perfect model for analogues of the van der Corput lemma in high dimensions.  
The operator (\ref{mo}) seems to be more suitable, for the associated estimate (\ref{KL0}) captures the assumption (\ref{UI1})
naturally. 
A systematic study for 
(\ref{mo}) for general degenerate phases was pioneered by Phong and Stein in the `90s.
 The key analytic tool to unlock the mystery is, 
 now commonly referred as, Phong-Stein's operator van der Corput \cite{PS94-1,PS94-2}. Sharp $L^2$ estimates were first established for homogeneous polynomial phases \cite{PS94-2}, extended to arbitrary real analytic phases \cite{PS97} and finally to the damped version \cite{PS97-2}. 
The first two results (without damping) can be interpreted as a particular case of Theorem \ref{COM}, namely 
the case when $p=2$ and $(\f 1 {2\alpha}, \f 1{2\alpha})$ 
is lying on the reduced Newton diagram.  
This should not be surprising for 
our proof employs various ideas from these papers and many others. 
One of the advantages of Theorem \ref{M1}, compared to Varchenko's or Phong-Stein's results, is that we do not require the full knowledge of the Newton polygon
(namely the Newton distance) 
to describe a sharp estimate. One single vertex, i.e. (\ref{UI1}) is sufficient for this purpose. 

In the category of $C^\infty$, 
almost sharp $L^2$ estimates was implicitly contained in Seeger \cites{SEE93, SEE99}, whose arguments are quite different from Phong and Stein's.  
Combining the ideas from Phong-Stein and Seeger, Rychkov \cite{RY01} was able to obtain sharp estimates in most $C^\infty$ situations. The exceptional cases are the ones when the phases are completely degenerated, which is also one of the two major difficult cases in our proof of Theorem \ref{COM}.   
The full $C^\infty$ situation is indeed quite challenging, for 
the formal power series of the phase alone may not be sufficient to capture the behavior of the singularities.  
 In \cite{GR05}, Greenblatt,  
 via a remarkable stopping time argument,
 was able to establish the desired sharp $L^2$ estimates in full generality. 
Whether Theorem \ref{COM} (and thus Theorem \ref{M1}) can be extended to full $C^\infty$ situations are still under investigation.

In a slightly different context, Phong, Stein and Sturm \cite{PSS01} 
studied uniform estimates for certain multilinear oscillatory integral operators associated to polynomials. 
The global estimates indeed correspond to their {\it bilinear} (i.e. $d=2$) setting below.  
They defined the following multilinear form 
\begin{align}\label{PSS9}
T_D(f_1,\dots,f_d) = \int_D e^{i\lambda S(x_1,\cdots, x_d)} \prod_{j=1}^d f_j(x_j)d x_1\cdots d x_d
\end{align}
and proved in particular 
\begin{align}\label{PSS8}
|T_D(f_1,\cdots,f_d)| \leq C |\log (2+|\lambda|)|^{d-\f 1 2}
|\lambda|^{-\f 1 {|\alpha|}} \prod_{j=1}^d\|f_j\|_{p_j}
\end{align}
under the assumptions that $S: [0,1]^d\to \mathbb R$ is a polynomial of degree $n$, 
that $\alpha =(\alpha_1,\cdots, \alpha_d)\in \mathbb N^d$ is a multiindex,
that $D$ is the subset of $[0,1]^d$ defined by $|S^{(\alpha)}| \geq 1$ and that $\f 1 {p_j} = 1 -\f {\alpha_j}{|\alpha|}$.  
Their estimates are uniform in 
a sense that the constant $C$ in \eqref{PSS8} depends only on $d$, $n$ and $\alpha$, but may not be sharp due to the $\log(2+|\lambda|)$ term.      
While in the {\it bilinear} setting when $p_1 $ and $p_2 $ are all equal to 2, by inserting a smooth cut-off, they were able to eliminate the $\log$ term and thus obtained sharp and uniform estimates at the same time. 
Unfortunately, their treatment seems not sufficient to eliminate the $\log$ term for all the endpoints, even when the phase is a monomial. 
We will explain this point in Section \ref{SOP}. 
Moreover, their approach makes essential use of the polynomial character of the phases, namely Bezout's Theorem and
certain uniform estimates of polynomials (see Lemma 1.2 \cite{PS94-2}), and does not seem to apply to the more general setting.  
However, there has been evidence that the endpoint estimates may be true.   
Under the assumption that $S$ in (\ref{phase01}) is a homogeneous polynomial 
of degree $n$ 
in $(x-x_0)$, $(y-y_0)$
with $c_{1,n-1}c_{n-1,1} \neq 0$, 
Yang \cite{Y04} was able to obtain sharp endpoint estimates for $T_0$; see \cite{GS99} as well.  
Very recently, Shi and Yan \cite{SY14} established sharp endpoint estimates for arbitrary homogeneous polynomial phases. 
In the real analytic category, Yang \cite{Y05} showed that if $\|T_0f\|_p\leq C|\lambda|^{-\f 1 \alpha}\|f\|_p $ then $(\f 1 {p\alpha}, \f 1 {p' \alpha})$ must lie in $\mathcal N(S)$ and established all such estimates except the endpoints. 
Theorem \ref{COM} states that all endpoint estimates are indeed true.   

Of course, the ultimate goal is to establish sharp and uniform estimates of \eqref{KL0}. However
our proof of Theorem \ref{COM} (and thus of Theorem \ref{M1}) relies crucially on the resolution algorithm developed in \cite{X2013},
which itself seems not sufficient to obtain optimal and uniform estimates simultaneously. In particular, the following question remains 
wide-open: 

\textbf{Problem:} Suppose $S$ is smooth such that $| \frac  {\p ^{k+l}}{\p_x^k\p_y^l}S| \geq 1$ in $[-1,1]^2$. Suppose also $\supp\chi$ is contained in $[-1,1]^2$. Does (\ref{KL0}) hold for some constant $C$ independent of other assumptions on $S$?    

It may be too ambitious to address this problem in such generality. 
A more realistic goal, but still quite challenging, is to prove such results under additional assumptions that $S$ is real analytic and that $C$ is allowed to depend on higher derivatives of $S$. 
We refer the readers to \cites{KA86, PSS99} for discussion concerning stable and sharp estimates for the scalar integral \eqref{VD01} and its sublevel set analogue; see \cite{PS00} as well. 
Another nice investigation of this problem can be found in Carbery, Christ and Wright \cite{CCW99}, where they obtained the uniform estimate $\|T_\lambda\|_2\leq C|\lambda|^{-\f1 {2l}}$
 when $k=1$ and $l\geq 2$.  
However, the $|\lambda|^{-\f1 {2l}}$ decay still falls short of the $|\lambda|^{-\f 1 {l+1}}$ decay rate
suggested by the optimal decay in Theorem \ref{M1}. 
In sum, there is still a huge gap between the expected optimal and uniform estimates and what is currently known.  
This line of investigation may deserve further effort. 

The rest of this paper is organized as follow. 
In the next section, we will carefully describe our strategy in proving Theorem \ref{COM}. 
We will review Phong-Stein(-Sturm)'s arguments and then explain by examples why their arguments are not sufficient 
to build up Theorem \ref{COM} in full generality. 
Section \ref{pre01} provides the analytic tools for our main course of the proof. 
In section \ref{res}, we will sketch the proof of the resolution algorithm but refer the readers to \cite{X2013}, as well as \cite{GX15} for rigorous details. 
The significance of the algorithm is twofold. 
First, it reduces our problem to the situation that the phase is essentially a monomial; see Section \ref{Red}.  
Most of the cases can be then addressed by carefully exploiting the orthogonality of the operator, 
whose details will appear in Section \ref{minor}.  
Second but most importantly, it helps to identify precisely when and where exceptional cases arise and 
deduce crucial algebraic structure for the phases in such cases. 
With this in place, the first exceptional case is addressed by a lifting trick (Lemma \ref{INT001}), which 
reduces the desired $L^p$-estimate 
to a $L^2$-estimate for a truncated non-degenerate oscillatory integral operator. 
The other exceptional case is handled 
by complex interpolation between
a $L^2$ estimate for damped oscillatory integral operator 
and a variant of $H^1$ estimate for some oscillatory singular integral operator.  
Full details will appear in Section \ref{major1} and \ref{major2} respectively.

\subsection*{Acknowledgements} 
The author would like to thank Xiaochun Li and Philip T. Gressman for many helpful suggestions in this paper, and Zuoshunhua Shi for early collaboration on this topic.

\section{Strategy of the proof}\label{SOP}
To understand our strategy, 
we will begin with reviewing Phong-Stein(-Sturm)'s arguments for handling $T_\lambda $. In particular we will illustrate, by model examples,  
why their arguments fail in two exceptional cases.  
Then we will outline the methods to settle them.  
Of course the key is to deduce useful algebraic structure for the phases when exceptional cases occur,
but this will only be clear after running both the resolution algorithm and their arguments.  

For simplicity, we begin with $p = p'=2$.  
The three basic principles are
the size-estimate, the oscillation-estimate and the almost-orthogonality:
\begin{enumerate}
\item[(A.1)] If $\chi$ is supported in a $\sigma\times\rho$ box, then $|\langle T_\lambda f, g \rangle|\lesssim \sigma^{\f1{ 2}}\rho^{\f 1 2} \|f\|_2\|g\|_{2}$;
\item[(A.2)] If $|S''_{xy}|\sim 2^{-j}$ in $\supp \chi$, then $|\langle T_\lambda f, g \rangle|\lesssim |2^{-j}\lambda|^{-1/2} \|f\|_2\|g\|_{2}$;
\item[(A.3)] If we have a sequence of operators $T_h$ defined as in \eqref{mo} with $\chi_h$ as the smooth cut-offs, and if 
both the $x$-projections and $y$-projections of $\{\supp \chi_h\}$ are pair-wisely disjointed, then 
 $ \sum_h |\langle T_h f, g\rangle | \lesssim \sup_h |\langle T_h f, g\rangle |$.
\end{enumerate} 
Rigorous statements can be found in Lemma \ref{size1}, Lemma \ref{os2} and Lemma \ref{or3}, respectively. 
The size-estimate (A.1) is used in low-oscillation regions, i.e. when $j$ is large, while (A.2) is employed in high-oscillation regions.  
The almost-orthogonality (A.3) is employed in the summation 
of the operators in the regions of same oscillation.  
The two main procedures in their arguments are decomposition and summation. 
In the first procedure, one decomposes $\chi$ into the sum of collections of cut-offs adapted to the three principles above  
\begin{enumerate}
\item each cut-off is essentially supported in a rectangular box,
\item for each $j\geq 0$, $|S_{xy}''|\sim 2^{-j}$ in the support of the cut-offs belonging to the $j$-th collection,
\item cut-offs in the same collection 
 exhibit the orthogonality properties as (A.3).
\end{enumerate}
   The operator $T_\lambda $ is decomposed into small pieces accordingly.  
This decomposition procedure is the most crucial part to bound the operator $T_\lambda $ and how to obtain such decomposition is a very delicate issue.  
   
 In the second procedure, we estimate $T_\lambda $ piece by piece using either (A.1) or 
 (A.2) (depending on which one is superior) and carefully sum over all of them.  
The almost-orthogonality (A.3) asserts that the bound obtained by estimating one single piece
is essential the same as the one obtained by 
 adding all pieces together in the same collection. 
Thus the summation (and decomposition) inside the same collection is not essential but  
cross-collection summation (and decomposition) is the main issue.  
To continue, there is a threshold value $j_0$ such that for each $j<j_0$ (i.e. in the high-oscillation regions) and for each piece in the $j$-th collection,  the oscillation-estimate (A.2) is superior and has a bound $\sim 2^{(j-j_0)/2}|2^{-j_0}\lambda|^{-1/2}$. 
The crucial extra decay term $2^{(j-j_0)/2}$ paces the way for the summation over $j< j_0$. 
As it will be observed later, 
there is some $p\neq 2$ such that there won't be such extra decay in any high-oscillation regions.   
In the $j>j_0$ case, the size-estimates (A.1) are superior with bounds $\sim 2^{\epsilon(j_0-j)}  |2^{-j_0}\lambda|^{-1/2}$ for some $\epsilon>0$.   
We can then obtain the sharp bound $ |2^{-j_0}\lambda|^{-1/2}$ by summing over all $j$ in these two regions.  
The arguments of Phong-Stein-Sturm \cite{PSS01} of handling polynomial phases are slightly different. 
The first step is to decompose the support of $\chi$ into two regions. 
In the $|S''_{xy}|\leq 2^{-j_0}$ region, they were able to control the bound of the operator by certain sublevel set estimates; see Lemma 2.1\cites{PSS01} and Lemma 3.8 \cite{CCW99}. The $|S''_{xy}|\geq  2^{-j_0}$ region was further decomposed into subregions $|S''_{xy}|\sim  2^{-j}$, $j\leq j_0$. They developed a more flexible version of the oscillation-estimate (A.2), obtaining the same bounds $\sim 2^{(j-j_0)/2}|2^{-j_0}\lambda|^{-1/2}$ for the operator 
without the need of sub-dividing $|S''_{xy}|\sim  2^{-j}$ into boxes.  However,  
the latter arguments make essentially use of the polynomial feature of the phases; see Lemma 1.2 \cite{PS94-2}.  

As long as one is able to construct a nice decomposition of the operator $T_\lambda $, 
one can generalize the above arguments to the $p\neq 2$ case. Under the same assumptions in (A.1), (A.2) and (A.3) we have the following analogues 
\begin{enumerate}
\item[(B.1)] $|\langle T_\lambda f, g \rangle|\lesssim \sigma^{\f1{ q}}\rho^{\f 1 {q'}} \|f\|_q\|g\|_{q'}$ for 
$1\leq q \leq\infty$;
\item[(B.2)] The estimates obtained by interpolating (A.2) with the $q=1$ estimates in (B.1) when $p<2$ and with the $q=\infty $ estimates when $p>2$; 
\item[(B.3)] Same as (A.3),
\end{enumerate} 
respectively. 
To get into the heart of our problem, we shall move one step further into the decomposition process, 
which is often referred as (an algorithm for) resolution of singularities; see Hironaka \cite{HI64} and many others
 \cites{PS97, PSS01, PSS99, GR04, X2013, IM11, CGP13, VAR76 }. 
Our proof of Theorem \ref{COM} is built up upon the one developed in \cite{X2013}.  
The goal of this algorithm is to decompose a small neighborhood of a singularity into finitely many subregions in each of 
which the function $S''_{xy}$ behaves like a monomial. 
The problem is reduced to bound the operator in every single subregion 
on which $S''_{xy}$ is essentially a monomial.  
We divide each subregion into rectangular boxes via a bi-dyadic decomposition in the variables.
Now in each of such box, $S''_{xy}$ is comparable to a fixed value.  
One can then apply the above mentioned strategy coupled with (B.1-B.3) to control the operator.  
It turns out that this approach works quite effectively in most situations, which we refer as the minor case. 
While in the remaining situations, although we still have good control for low-oscillation regions (using (B.1)), these arguments break down completely in the sense,
there won't be extra decay (compared to a $2^{(j-j_0)/2}$ decay for $p=2$) in any high-oscillation regions. 
Consequently, one has to give up the attempt of dividing $\supp\chi$ into regions with $S''_{xy}\sim 2^{-j}$. 
We refer such situation as the major cases and there are two of them. 
The significance of the algorithm is that, after reexamining the parameters appearing in the iterations, 
we deduce precisely when and where these two major cases will occur. Consequently, we are able to extract structural information of the phase. 
The first major case occurs when 
$k=1$ (or $l=1$ by adjointness) near the $x$-axis ($y$-axis rep.).
In another word, it happens when $S''_{xy}$ is singular in the ``highest order'' near the $x$-axis ($y$-axis rep.). 
 This case is produced in the initial stage of iterations of the algorithm and 
a model case is given by $S''_{xy}(x,y) =y^{l-1}$.      
The second case also occurs when $k=1$ (or $l=1$) near some analytic curve $y=\gamma(x)$ with $S''_{xy}(x,y)= (y-\gamma(x))^{l-1}$. In another word, it happens when $S''_{xy}$ is singular in the ``highest order'' near $y =\gamma(x)$.
This case is produced in higher stage of iterations and a model case is given by $S''_{xy}(x,y) =(y-x)^{l-1}$.
 One shall also compare this case with the completely degenerate case in \cite{RY01}.     
To see why Phong-Stein(-Sturm)'s approach does not work in the first model case, we divide the support of $\chi$ into regions with $S''_{xy} =y^{l-1} \sim 2^{-j}$, i.e. $y\sim 2^{-j/(l-1)}$.  Then $|\langle T_\lambda f, g\rangle |$ is controlled above by $|2^{-j}\lambda|^{-1/2 }\|f\|_2\|g\|_2$ and 
$|2^{-j/(l-1)} \|f\|_\infty \|g\|_1$ as well. 
By interpolation, it is then majorized by $|\lambda|^{-\f 1 l } \|f\|_{l+1} \|g\|_{\f {l+1}{l}}$, which contains no decay in $j$. 
Similar phenomenon occurs in the second model case.  
Henceforth, one must seek for alternative methods to handle these two major cases.  
\begin{enumerate}
\item  
A key idea to settle the first major case comes from Shi and Yan\cite{SY14}. They observed that, when $S''_{xy}(x,y) =y^{l-1}$, via a lifting trick from Zygmund
\cite{ZY}, 
the $L^{l+1}$ estimates can be deduced from the $L^2$ boundedness of the Fourier transform, 
which is obvious due to Plancherel's theorem. 
In the general real analytic setting, the problem is reduced to the $L^2$ estimates for a truncated non-degenerate oscillatory integral operator. This truncation prevents us from applying the results or arguments from \cite{HOR73}. 
To address this issue, we rely on a clever idea from Phong Stein and Sturm, concerning the $\log$-removal. 

\item The second major case will be addressed by, 
after some technical treatment,  
 embedding our operator into a complex family of operators. 
 Residing on one side of this family is a particular case of the $L^2$-estimates for the damped oscillatory integral operators
 by Phong-Stein \cite{PS97-2}.
We will quote their results directly. 
The other side is $H_E^1 \to L^ 1$ estimates more {\it a la} Phong-Stein \cite{PS86} and Pan \cite{PAN91, PAN95} (see \cites{RS86} for earlier related results and \cites{GS99, SY14, Y04} for recent application). 
Although Pan established such estimates for polynomial phases in all dimensions, 
they do not apply directly to our setting due to  
an extra truncation coming from the resolution algorithm. 
A complete proof will appear in Section \ref{major2}.

\end{enumerate}

\subsection*{Notation:} 
We use $X\lesssim Y$ to mean ``there exists a constant $C>0$ such that $X\leq CY $'', where in this context (and throughout the entire paper) constants $C$ may depend on the phase $S$ and the cut-off function $\chi$ or $\phi$ but must be independent of the parameter $\lambda$ and the functions $f$ and $g$. 
The expression $X\gtrsim Y$ is analogous,
and $X\sim Y$ means both $X\lesssim Y$ and $X\gtrsim Y$. 
We will also use a (rectangular) box to mean a rectangle whose sides are parallel to the coordinates.

%
%
%
%
%
%
%
%
%
%
%
%
%
%
%
\section{Preliminary: analytic lemmata}\label{pre01}
The purpose of this section is to provide the analytic tools to prove our main theorem.
Rigorous statements of (A.1-A.3) or (B.1-B.3) are given as follow: 
\begin{lemma}\label{size1}
Let $T_\lambda f$ be given in \eqref{mo} and suppose $\supp\chi$ is contained in a $\sigma\times \rho$ rectangular box, then for $1 \leq q\leq \infty$,   
$$
|\langle T_\lambda f, g\rangle | \leq  \sigma^{\f1{ q}}\rho^{\f 1 {q'}} \|f\|_q\|g\|_{q'}.  
$$
\end{lemma}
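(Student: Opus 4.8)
The plan is to discard the oscillatory factor entirely: since $|e^{i\lambda S(x,y)}| = 1$ pointwise,
$$
|\langle T_\lambda f, g\rangle| \le \iint_{\mathbb R^2} |\chi(x,y)|\,|f(y)|\,|g(x)|\,dx\,dy .
$$
Write the $\sigma\times\rho$ box containing $\supp\chi$ as $I\times J$, where $I$ is its projection onto the $x$-axis ($|I|=\sigma$) and $J$ its projection onto the $y$-axis ($|J|=\rho$); this is the orientation consistent with $f$ being a function of $y$ and $g$ of $x$. Using $\|\chi\|_{\infty}\le 1$ (which we may assume after a harmless normalization; in general the bound only picks up the factor $\|\chi\|_{\infty}$), the right-hand side is at most
$$
\Big(\int_I |g(x)|\,dx\Big)\cdot\Big(\int_J |f(y)|\,dy\Big).
$$

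Next I would apply H\"older's inequality to each of the two one-dimensional integrals separately. On $J$ one gets $\int_J |f| \le \|f\|_{L^q(J)}\,|J|^{1/q'} \le \rho^{1/q'}\|f\|_q$, and on $I$ one gets $\int_I |g| \le \|g\|_{L^{q'}(I)}\,|I|^{1/q} \le \sigma^{1/q}\|g\|_{q'}$, where I used that $q$ and $q'$ are conjugate so the exponents of the trivial factor $\mathbf 1$ come out as $1/q'$ and $1/q$ respectively. Multiplying the two estimates yields precisely $\sigma^{1/q}\rho^{1/q'}\|f\|_q\|g\|_{q'}$. The endpoint values $q=1$ (so $q'=\infty$) and $q=\infty$ are covered by the usual conventions and can also be verified directly.

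An equivalent, operator-theoretic route: $T_\lambda$ has kernel $K(x,y)=e^{i\lambda S(x,y)}\chi(x,y)$ with $|K(x,y)|\le \mathbf 1_I(x)\,\mathbf 1_J(y)$, whence $\sup_x\int|K(x,y)|\,dy\le \rho$ and $\sup_y\int|K(x,y)|\,dx\le \sigma$; Schur's test then gives $\|T_\lambda\|_{L^q\to L^q}\le \rho^{1/q'}\sigma^{1/q}$, which is the stated inequality in dual-pairing form. In either approach there is essentially no analytic obstacle; the only point requiring care is the bookkeeping of exponents, namely that the side length paired with $f$ (the $y$-side, length $\rho$) must carry the exponent $1/q'$ and the side length paired with $g$ (the $x$-side, length $\sigma$) the exponent $1/q$, matching their placement in the statement, together with the correct interpretation of the $q=1$ and $q=\infty$ endpoints.
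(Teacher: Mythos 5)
Your argument is correct and is exactly the route the paper has in mind: the paper states the lemma can be "proved by H\"older's inequality or by interpolations" and omits the details, and your Hölder/Schur-test computation is precisely that, with the exponent bookkeeping (the $\rho^{1/q'}$ paired with $\|f\|_q$, the $\sigma^{1/q}$ paired with $\|g\|_{q'}$) carried out correctly. The only point worth noting is that the clean bound requires the implicit normalization $\|\chi\|_\infty\le 1$, which you flag appropriately.
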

This lemma can be interpreted as variants of Schur's test, or proved by H\"older's inequality or by interpolations. We omit the details. 
The second one is Phong-Stein's operator van der Corput, whose proof can be found in many places; see \cite{PS94-1, PS94-2, RY01, GR04}. 
Indeed, it can be obtained by the proof of Lemma \ref{L2} in Section \ref{major1}.  
\begin{lemma}\label{os2}
Let  $\phi(x,y)$ be a smooth function supported in a strip of $x$-width and $y$-width
no more than $\delta_1$ and $\delta_2$ respectively. Assume also 
\begin{align}\label{ps1}
|\partial_y \phi(x,y)| \lesssim \delta_2^{-1} \q\q\mbox{and} \q\q|\partial^2_y \phi(x,y)| \lesssim \delta_2^{-2}. 
\end{align}
Let $\mu>0$ and $S(x,y)$ be a smooth function s.t. for all $(x,y)\in \supp(\phi)$:
\begin{align}\label{PS1}
|\partial_x\partial_yS(x,y)| \gtrsim \mu
\q\q\mbox{and}\q\q
 |\partial_x\partial_y^\alpha S(x,y)| \lesssim \frac{\mu}{\delta_2^{\alpha}} \q\mbox{for}\q \alpha=1,2\,.
\end{align}
Then the operator defined by 
\begin{align}
T_\lambda f(x)  =\int e^{i\lambda S(x,y)}f(y)\phi(x,y)dy 
\end{align}
satisfying 
\begin{align}\label{PS2}
\|T_\lambda f\|_2 \lesssim |\lambda\mu|^{-\frac 1 2 }\|f\|_2.
\end{align}
\end{lemma}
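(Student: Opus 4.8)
\textbf{Proof proposal for Lemma \ref{os2}.}

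The plan is to prove the $L^2\to L^2$ bound by the standard $TT^*$ (or $T^*T$) argument combined with a one-dimensional van der Corput / non-stationary phase estimate on the resulting kernel, keeping careful track of the geometry of the support. Set $Tf(x)=\int e^{i\lambda S(x,y)}f(y)\phi(x,y)\,dy$. I would consider $T^*T$, whose kernel is
\begin{align}
K(y,y')=\int e^{i\lambda\bigl(S(x,y)-S(x,y')\bigr)}\,\overline{\phi(x,y)}\,\phi(x,y')\,dx.
\end{align}
The phase in the $x$-integral is $\Psi(x)=S(x,y)-S(x,y')$, and $\partial_x\Psi(x)=\int_{y'}^{y}\partial_x\partial_yS(x,t)\,dt$. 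Using the lower bound $|\partial_x\partial_yS|\gtrsim\mu$ together with the fact that $\partial_x\partial_yS$ does not change sign on $\supp\phi$ (this follows from the hypotheses, since $|\partial_x\partial_y^2 S|\lesssim\mu/\delta_2$ forces $\partial_x\partial_yS$ to be essentially constant in $y$ across a $\delta_2$-strip, hence nonvanishing of a fixed sign), I get $|\partial_x\Psi(x)|\gtrsim\mu|y-y'|$. Similarly $|\partial_x^2\Psi(x)|\lesssim \mu|y-y'|/\delta_2$ is not needed directly; what I want is that $\partial_x\Psi$ is monotone in $x$, or at least has boundedly many sign changes, so that the first-derivative van der Corput lemma (Lemma \ref{VD9} with $k=1$) applies. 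Monotonicity of $\partial_x\Psi$ in $x$ should follow from a further structural hypothesis or from the fact that on a small enough box $\partial_x^2\partial_y S$ has controlled size; in the worst case one subdivides the $x$-interval into $O(1)$ pieces on which $\partial_x\Psi$ is monotone. This yields
\begin{align}
|K(y,y')|\lesssim \min\Bigl(\delta_1,\ \bigl(|\lambda|\,\mu\,|y-y'|\bigr)^{-1}\Bigr).
\end{align}

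Then I would bound the operator norm of $T^*T$ via Schur's test: $\sup_{y'}\int|K(y,y')|\,dy \lesssim \int_0^{\delta_2}\min(\delta_1,(|\lambda|\mu s)^{-1})\,ds$. Splitting the integral at $s_0=(|\lambda|\mu\delta_1)^{-1}$ gives a bound $\lesssim \delta_1 s_0 + (|\lambda|\mu)^{-1}\log(\delta_2/s_0)$; the logarithm is the usual nuisance, and I expect it is absorbed because the effective range of $|y-y'|$ is at most $\delta_2$ and $|K|$ is also trivially bounded, or — more cleanly — one uses the size estimate $|K|\lesssim \delta_1$ only on $|y-y'|\lesssim (|\lambda|\mu\delta_1)^{-1}$ and the oscillatory bound elsewhere, noting $(|\lambda|\mu)^{-1}\log$ is dominated by $(|\lambda|\mu)^{-1}$ up to adjusting constants only if the log is genuinely bounded; since $\delta_2\lesssim 1$ and the relevant window is controlled by the hypotheses, the log factor is $O(1)$. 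Hence $\|T^*T\|_{2\to 2}\lesssim (|\lambda|\mu)^{-1}$, which gives $\|T\|_{2\to2}\lesssim(|\lambda|\mu)^{-1/2}$, i.e.\ \eqref{PS2}.

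The conditions \eqref{ps1} on $\phi$ enter when one wants to avoid losing factors from differentiating the amplitude: in an integration-by-parts version of the argument one writes $e^{i\lambda\Psi}=(i\lambda\Psi')^{-1}\partial_x(e^{i\lambda\Psi})$ and integrates by parts, and the derivative hitting $\phi$ produces a term controlled by $\delta_1^{-1}$, matching the $\min$ above; the derivative hitting $(\lambda\Psi')^{-1}$ produces $\lambda\Psi''/(\lambda\Psi')^2$, and here the upper bounds in \eqref{PS1} on $\partial_x\partial_y^\alpha S$ (for $\alpha=1,2$) are exactly what is needed to show this is harmless. I would organize the proof so that a single integration by parts, justified by the sign/monotonicity discussion above, produces both the gain $(|\lambda|\mu|y-y'|)^{-1}$ and the tameness of the error terms.

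\textbf{Main obstacle.} The delicate point is controlling the number of sign changes / ensuring the requisite monotonicity of $\partial_x\Psi(x)=\int_{y'}^y \partial_x\partial_y S(x,t)\,dt$ in $x$ purely from the hypotheses \eqref{PS1}, which only bound $\partial_x\partial_y S$ and $\partial_x\partial_y^2 S$ and say nothing directly about $\partial_x^2\partial_y S$. The resolution is that $|\partial_x\partial_y S|\gtrsim\mu$ together with $|\partial_x\partial_y^2 S|\lesssim\mu/\delta_2$ pins down the \emph{sign} of $\partial_x\partial_y S$ throughout the $\delta_2$-strip, so $\partial_x\Psi$ has a well-defined sign and magnitude $\gtrsim\mu|y-y'|$; one then applies the $k=1$ van der Corput lemma after checking $\partial_x\Psi$ is monotone on $O(1)$ subintervals, which is where one either invokes an additional mild bound or simply accepts the harmless constant. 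This is precisely the mechanism of Phong–Stein's operator van der Corput, and I would follow their treatment (as the statement itself notes, this lemma also drops out of the proof of Lemma \ref{L2}).
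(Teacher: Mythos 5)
\textbf{Gap 1: wrong composition.} You form $T^*T$, whose kernel $K(y,y')=\int e^{i\lambda(S(x,y)-S(x,y'))}\overline{\phi(x,y)}\phi(x,y')\,dx$ is an \emph{$x$-integral}. To integrate by parts in $x$ (or to invoke van der Corput with an amplitude) you need, at minimum, control of $\partial_x\phi$ and of $\partial_x^2\Psi=\int_{y'}^{y}\partial_x^2\partial_y S(x,t)\,dt$, hence of $\partial_x^{2}\partial_y S$; you acknowledge this yourself. But the hypotheses \eqref{ps1}--\eqref{PS1} give you \emph{only} $y$-derivative control: $\partial_y\phi,\partial_y^2\phi$ and $\partial_x\partial_y^\alpha S$. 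Nothing bounds $\partial_x\phi$ or $\partial_x^2\partial_y S$, so neither the monotonicity of $\partial_x\Psi$ nor the amplitude-derivative error terms can be handled, and the ``$O(1)$ subdivision'' you invoke is not available. The composition that matches the hypotheses is $TT^*$: its kernel
\begin{align*}
K(x,x')=\int e^{i\lambda(S(x,y)-S(x',y))}\,\phi(x,y)\,\overline{\phi(x',y)}\,dy
\end{align*}
is a $y$-integral, the phase satisfies $\partial_y\Psi=\int_{x'}^{x}S''_{xy}(s,y)\,ds$ with $|\partial_y\Psi|\gtrsim\mu|x-x'|$, and integration by parts in $y$ hits exactly the quantities that \eqref{ps1}--\eqref{PS1} control. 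This is the set-up in the paper's own proof (of Lemma \ref{L2}), which is flagged in the text as the source of Lemma \ref{os2}.

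\textbf{Gap 2: the log is not $O(1)$.} Even granting the set-up, a single integration by parts gives kernel decay $\lesssim(\lambda\mu|\cdot|)^{-1}$, and the Schur integral you write,
\begin{align*}
\int_0^{\delta_2}\min\bigl(\delta_1,(|\lambda|\mu s)^{-1}\bigr)\,ds\ \sim\ (|\lambda|\mu)^{-1}\bigl(1+\log_+(|\lambda|\mu\,\delta_1\delta_2)\bigr),
\end{align*}
has a factor $\log(|\lambda|\mu\delta_1\delta_2)$ that tends to infinity with $|\lambda|$; it is \emph{not} absorbed by any of the mechanisms you describe. This is precisely the ``$\log(2+|\lambda|)$'' obstruction the paper repeatedly emphasizes. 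The paper's own argument removes it by integrating by parts \emph{twice} (this is why the hypotheses run to $\alpha=2$ and the amplitude is required to have bounded second $y$-derivative), giving
\begin{align*}
|K(x,x')|\ \lesssim\ \frac{\delta_2}{1+\bigl(\lambda\mu\,\delta_2|x-x'|\bigr)^2},
\end{align*}
whose Schur integral is $\int_{\mathbb R}\frac{\delta_2\,ds}{1+(\lambda\mu\delta_2 s)^2}\sim(\lambda\mu)^{-1}$ with no log. So the missing idea is: transpose to $TT^*$ and do two integrations by parts, tracking the upper bounds on $\partial_y^2\Psi$, $\partial_y^3\Psi$ and on $\partial_y^2(\phi\bar\phi)$ to get an $|x-x'|^{-2}$ tail rather than $|x-x'|^{-1}$.
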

The third one is the almost-orthogonality principle, which can be obtained by H\"older's inequality (Cauchy-Scharz's inequality).
\begin{lemma}\label{or3}
Let $\{T_h\}$ be a sequence of operators defined as in \eqref{mo} with $\chi_h$ as the smooth cut-offs. Suppose for every $y$ 
$$
\sum_{h}\int |\chi_h(x,y)| dx\lesssim 1 
$$
and the analogue also holds for every $x$. Then for $ 1\leq q \leq \infty$,  
 $$
 \sum_h |\langle T_h f, g\rangle | \lesssim \sup_h  \|T_h f\|_{q} \|g\|_{q'}.  
 $$
\end{lemma}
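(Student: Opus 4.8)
\textbf{Proof proposal for Lemma \ref{or3}.}

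The plan is to reduce everything to a single bilinear estimate by splitting the sum into two manageable pieces using the disjointness hypotheses, then apply H\"older's inequality in the form of a duality pairing. First I would fix $1\le q\le\infty$ and write $\langle T_h f, g\rangle = \int (T_hf)(x)\,\overline{g(x)}\,dx$, so that $|\langle T_hf,g\rangle|\le \|T_hf\|_q\,\|g \mathbf 1_{E_h}\|_{q'}$, where $E_h$ is the $x$-projection of $\supp\chi_h$; here I use that $T_hf$ is supported in $E_h$. The hypothesis $\sum_h\int|\chi_h(x,y)|\,dx\lesssim 1$ for every $y$ is the disjointness statement on $x$-projections (after absorbing constants), and symmetrically for $y$-projections. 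The key point is that $\sum_h \|g\mathbf 1_{E_h}\|_{q'}^{q'} = \int |g(x)|^{q'}\bigl(\sum_h \mathbf 1_{E_h}(x)\bigr)dx \lesssim \|g\|_{q'}^{q'}$ when $q'<\infty$, since the $E_h$ have bounded overlap; the case $q'=\infty$ (i.e.\ $q=1$) is even easier and handled separately.

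Next I would estimate $\sum_h \|T_hf\|_q$. Writing $D_h$ for the $y$-projection of $\supp\chi_h$, we have $T_hf = T_h(f\mathbf 1_{D_h})$, and by the bounded overlap of the $D_h$ one gets $\sum_h \|f\mathbf 1_{D_h}\|_q^q \lesssim \|f\|_q^q$. But $\|T_hf\|_q = \|T_h(f\mathbf 1_{D_h})\|_q$; to relate this to $\|f\mathbf 1_{D_h}\|_q$ I would factor $\|T_hf\|_q = \|T_h(f\mathbf 1_{D_h})\|_q \le \|T_h\|_{q\to q}\,\|f\mathbf 1_{D_h}\|_q$, but this introduces operator norms which is not what we want. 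The cleaner route is to run the duality argument symmetrically: $|\langle T_hf,g\rangle| = |\langle f, T_h^*g\rangle|$, and estimate directly
\begin{align*}
\sum_h |\langle T_hf,g\rangle|
&\le \sum_h \|T_hf\|_q\,\|g\mathbf 1_{E_h}\|_{q'}
\le \Bigl(\sup_h \|T_hf\|_q\Bigr)\sum_h \|g\mathbf 1_{E_h}\|_{q'}.
\end{align*}
When $q'<\infty$ the last sum is controlled by $\|g\|_{q'}$ times the number that measures the overlap of the $E_h$, which is $O(1)$ by hypothesis (strictly: $\sum_h\|g\mathbf 1_{E_h}\|_{q'}\le (\sum_h\|g\mathbf 1_{E_h}\|_{q'}^{q'})^{1/q'}(\#\{\text{overlaps}\})^{1/q}$, and since $\sum_h\mathbf 1_{E_h}\lesssim 1$ the overlap multiplicity is $O(1)$ a.e., so both factors are $O(\|g\|_{q'})$). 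When $q=1$, i.e.\ $q'=\infty$, one instead has $\sum_h\|g\mathbf 1_{E_h}\|_\infty \le (\sup_h\mathbf 1_{E_h}\text{-count})\,\|g\|_\infty$ — but this fails in general, so for $q=1$ I would instead bound $\sum_h|\langle T_hf,g\rangle|\le \sum_h\|T_hf\|_1\|g\|_\infty$ and use $\sum_h\|T_hf\|_1 \le \sum_h\iint|\chi_h(x,y)||f(y)|\,dy\,dx\lesssim \int|f(y)|\,dy = \|f\|_1$, which directly gives $\sum_h|\langle T_hf,g\rangle|\lesssim\|f\|_1\|g\|_\infty$, matching $\sup_h\|T_hf\|_1\|g\|_\infty$ up to the way the sup is written. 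Symmetrically $q=\infty$ is handled by putting the overlap bound on the $y$-side.

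The main obstacle I anticipate is making the endpoint cases $q=1$ and $q=\infty$ cohere with the clean $1<q<\infty$ argument, since naive bounded-overlap bounds on $\ell^\infty$-type sums are false; the fix, as sketched above, is to exploit the hypotheses asymmetrically at the endpoints (overlap of $x$-projections for $q=\infty$, of $y$-projections for $q=1$) and to bound $\sum_h\|T_hf\|_1$ or $\sum_h\|T_h^*g\|_1$ directly via the $L^1$ kernel bound rather than through the bilinear pairing. A secondary technical point is the passage $\sum_h\|g\mathbf 1_{E_h}\|_{q'}\lesssim\|g\|_{q'}$: this is just the statement that for a bounded-overlap family, the map $g\mapsto(g\mathbf 1_{E_h})_h$ is bounded from $L^{q'}$ to $\ell^{q'}(L^{q'})$, which follows from $\sum_h\mathbf 1_{E_h}\lesssim1$ pointwise by the computation $\sum_h\int|g|^{q'}\mathbf 1_{E_h} = \int|g|^{q'}\sum_h\mathbf 1_{E_h}\lesssim\int|g|^{q'}$ for $q'<\infty$. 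None of these steps is deep; the content of the lemma is entirely in organizing the two disjointness hypotheses so that one controls the sum over the output variable and the other controls the sum over the input variable.
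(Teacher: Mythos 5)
There is a genuine gap, and it occurs exactly at the step you flagged as ``secondary'': the inequality $\sum_h \|g\mathbf{1}_{E_h}\|_{q'} \lesssim \|g\|_{q'}$ is false for $q'<\infty$, even under bounded overlap. Your parenthetical Hölder step $\sum_h\|g\mathbf 1_{E_h}\|_{q'}\le \bigl(\sum_h\|g\mathbf 1_{E_h}\|_{q'}^{q'}\bigr)^{1/q'}N^{1/q}$ has $N$ equal to the \emph{number of indices} $h$ that contribute, not the overlap multiplicity of the $E_h$; these are entirely different quantities, and $N$ is unbounded. Concretely, with $g=\mathbf 1_{[0,1]}$ and $E_h=[(h-1)/N,h/N]$ (pairwise disjoint, overlap $1$), one has $\sum_{h=1}^N\|g\mathbf 1_{E_h}\|_{q'}=N^{1/q}$, which is $\gg\|g\|_{q'}=1$. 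The same example (take $\chi_h$ a bump on $[(h-1)/N,h/N]^2$ and $S\equiv 0$) shows that the literal conclusion $\sum_h|\langle T_hf,g\rangle|\lesssim(\sup_h\|T_hf\|_q)\|g\|_{q'}$ cannot hold for $1<q<\infty$, so pulling $\sup_h\|T_hf\|_q$ out of the sum is fatal. The lemma's displayed conclusion is loose shorthand — what is actually established, and what is actually used in Sections~5 and~6, is a bound of the form $\sum_h|\langle T_hf,g\rangle|\lesssim B\,\|f\|_q\|g\|_{q'}$, where $B$ is the worst per-piece constant.

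The irony is that you had the correct argument in hand and discarded it. You wrote that $T_hf=T_h(f\mathbf 1_{D_h})$ and that $\sum_h\|f\mathbf 1_{D_h}\|_q^q\lesssim\|f\|_q^q$, then abandoned this route because ``it introduces operator norms which is not what we want.'' But the per-piece operator-norm (or bilinear-form) bound is precisely the input the lemma is designed to be fed; see how it is applied in~\eqref{LE01} and the displays immediately following, where each $\mathcal B_{\mathcal R}(f,g)$ is bounded by a constant times $\|f\Id_{\mathcal R_y}\|_p\|g\Id_{\mathcal R_x}\|_{p'}$. The point of Lemma~\ref{or3} (this is what the paper means by ``H\"older's inequality (Cauchy--Schwarz)'' in the sentence preceding it) is to apply H\"older \emph{in the index $h$}, pairing the $\ell^q$-structure on the $f$-side against the $\ell^{q'}$-structure on the $g$-side:
\begin{align*}
\sum_h |\langle T_hf,g\rangle|
&=\sum_h |\langle T_h(f\mathbf 1_{D_h}),\,g\mathbf 1_{E_h}\rangle|
\;\le\; B\sum_h \|f\mathbf 1_{D_h}\|_q\,\|g\mathbf 1_{E_h}\|_{q'}\\
&\le\; B\Bigl(\sum_h \|f\mathbf 1_{D_h}\|_q^{\,q}\Bigr)^{1/q}\Bigl(\sum_h \|g\mathbf 1_{E_h}\|_{q'}^{\,q'}\Bigr)^{1/q'}
\;\lesssim\; B\,\|f\|_q\,\|g\|_{q'},
\end{align*}
with the obvious modifications at $q\in\{1,\infty\}$ (where one side of the H\"older pairing is an $\ell^\infty$ sup and the other an $\ell^1$ sum, exactly as in your endpoint discussion). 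Both disjointness hypotheses are used once and simultaneously; neither factor may be pulled out as a supremum before the H\"older step, and neither $\sum_h\|f\mathbf1_{D_h}\|_q$ nor $\sum_h\|g\mathbf1_{E_h}\|_{q'}$ is controlled individually. Your proposal is correct at the endpoints $q=1,\infty$ but fails for all $1<q<\infty$.
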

As usual, they are referred as size-estimates, oscillation-estimates and the almost-orthogonality respectively. Coupled with the resolution algorithm, they are sufficient to settle the minor case. 

We also need the following lifting trick to address the first major case.  
The oldest reference we can find is Zygmund's book \cite{ZY}; see also \cite{HP92, SY14}. 
\begin{lemma}\label{INT001}
Let $p_0>1$ and $H$ be a sublinear operator such that 
\begin{align*}
\|H(f)\|_{L^{p_0}({\mathbb R)}\to L^{p_0}{(\mathbb R)}} \leq A
\end{align*}
and
\begin{align*}
\|H(f)\|_{L^1{(\mathbb R)}\to L^\infty{(\mathbb R)}} \leq B
\end{align*}
Then for $1<p\leq p_0$ one has
\begin{align}\label{in}
\int_{\mathbb R} |Hf(x)|^p|x|^{{(p-p_0)}/{(p_0-1)}} dx \leq C^{p} \int _{\mathbb R}|f(x)|^pdx
\end{align}
where $C\sim B^{\frac{p_0/p-1}{p_0-1}}A^\frac{p_0-p_0/p}{p_0-1}$. 
\end{lemma}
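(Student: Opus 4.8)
The plan is to prove Lemma~\ref{INT001} by a real-interpolation / dyadic-decomposition argument, splitting the weight $|x|^{(p-p_0)/(p_0-1)}$ according to dyadic annuli in $x$ and on each annulus playing the two hypothesized bounds off against each other with the right relative weight. First I would fix $f$ and, by homogeneity of the claimed inequality under $f\mapsto cf$, reduce to a convenient normalization; in fact it is cleanest to \emph{not} normalize but instead to introduce a splitting of $f$ itself at a level that will depend on $x$. Concretely, for $x\neq 0$ and a threshold $t=t(x)>0$ to be chosen, write $f=f_{>t}+f_{\le t}$ where $f_{>t}=f\mathbf 1_{\{|f|>t\}}$. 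Since $H$ is sublinear, $|Hf(x)|\le |Hf_{>t}(x)|+|Hf_{\le t}(x)|$, and hence for the purpose of bounding $|Hf(x)|$ pointwise we may use the $L^1\to L^\infty$ bound on the ``large'' part and the $L^{p_0}\to L^{p_0}$ bound on the ``small'' part. The $L^1\to L^\infty$ bound gives $|Hf_{>t}(x)|\le B\|f_{>t}\|_1$, which unfortunately is not pointwise-in-$x$; so the more robust route is to run the standard Marcinkiewicz-type computation directly on the weighted integral.

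Here is the step I would actually carry out. Set $q=p_0$. For $\alpha>0$ consider the distribution function of $Hf$ against the measure $d\mu(x)=|x|^{(p-q)/(q-1)}\,dx$ — no, better: keep Lebesgue measure and handle the weight by a dyadic sum. Decompose $\ZR=\bigcup_{k\in\ZZ}\{2^k\le |x|<2^{k+1}\}=:\bigcup_k A_k$. On $A_k$ the weight is $\sim 2^{k(p-q)/(q-1)}$ (note $p-q\le 0$, so this is a negative power when $k\to+\infty$, positive when $k\to-\infty$; the integrability of the final right-hand side against $f$ is what makes the sum converge). For each $k$, choose a splitting level $t_k>0$ and estimate
\begin{align*}
\int_{A_k}|Hf(x)|^p\,dx
&\lesssim \int_{A_k}|Hf_{>t_k}(x)|^p\,dx+\int_{A_k}|Hf_{\le t_k}(x)|^p\,dx.
\end{align*}
For the first term use $|Hf_{>t_k}(x)|\le B\|f_{>t_k}\|_1$ pointwise and $|A_k|\sim 2^k$; for the second, if $p\le q$ I would use $\|Hf_{\le t_k}\|_{L^p(A_k)}\le |A_k|^{1/p-1/q}\|Hf_{\le t_k}\|_{q}\le A|A_k|^{1/p-1/q}\|f_{\le t_k}\|_{q}$ by Hölder on the finite-measure set $A_k$. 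Then optimize in $t_k$: the natural choice balances the two contributions, and one finds $t_k$ proportional to a power of $2^k$ with exponent dictated by matching $(p-q)/(q-1)$; with $C\sim B^{(q/p-1)/(q-1)}A^{(q-q/p)/(q-1)}$ as stated. Summing the resulting bounds over $k\in\ZZ$ collapses — after interchanging sums — to $C^p\int_\ZR|f|^p\,dx$, because the geometric series in $k$ telescopes against the layers of $|f|$. I would present this as: bound $\int_{A_k}|Hf|^p\,dx\lesssim C^p\int_{\{t_{k-1}<|f|\le t_{k+1}\}}|f|^p\,dx$ (up to finitely-overlapping layers), then sum.

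The main obstacle, and the place to be careful, is making the optimization/summation rigorous: the weight exponent $(p-q)/(q-1)$ must come out \emph{exactly}, not just up to epsilons, which forces the precise choice $t_k\sim 2^{k\beta}$ with $\beta=\frac{1}{q-1}\cdot\frac{?}{?}$ determined by demanding that the $k$-sum of $2^{k(p-q)/(q-1)}\big(2^k B\|f_{>t_k}\|_1\big)^p$ and of $2^{k(p-q)/(q-1)}2^{k(1-p/q)}\big(A\|f_{\le t_k}\|_q\big)^p$ both reorganize into a single clean bound by $\int|f|^p$. Getting the bookkeeping of the layers $\{t_k<|f|\le t_{k+1}\}$ to cover $\ZR$ with bounded overlap, and checking that the endpoint cases $|x|\to0$ and $|x|\to\infty$ do not cause divergence (they don't, precisely because the splitting level moves with $k$), is the delicate part. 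A secondary subtlety is that $H$ is only sublinear, not linear, so I must only ever use the triangle-type inequality $|H(g_1+g_2)|\le|Hg_1|+|Hg_2|$ and never subtract; this is compatible with everything above. I would also remark that the restriction $p>1$ is used exactly once, in applying Hölder on $A_k$ to pass from $L^q$ to $L^p$ with $|A_k|<\infty$ (and in the convergence of the geometric series), and that $p\le p_0$ is what makes $1/p-1/q\ge0$ so that Hölder goes the right way.
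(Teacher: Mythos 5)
Your plan takes a genuinely different route from the paper's, and as written it contains a gap. The paper argues by a change of target measure: set $\tilde H f(x) = |x|^{1/(p_0-1)}Hf(x)$ and $d\mu(x) = |x|^{-p_0/(p_0-1)}\,dx$; then the $L^{p_0}\to L^{p_0}$ hypothesis on $H$ becomes $\|\tilde H\|_{L^{p_0}(dx)\to L^{p_0}(d\mu)}\le A$ (the weight cancels exactly), a two-line computation shows the $L^1\to L^\infty$ hypothesis on $H$ yields $\|\tilde H\|_{L^1(dx)\to L^{1,\infty}(d\mu)}\lesssim B$, and Marcinkiewicz interpolation gives \eqref{in} directly. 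You instead attempt a self-contained proof via dyadic annuli in $x$ with a splitting level $t_k$ tied to the annulus index $k$.

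The gap lies in the summation. The per-$k$ bound you propose, $\int_{A_k}|Hf|^p\,dx\lesssim C^p\int_{\{t_{k-1}<|f|\le t_{k+1}\}}|f|^p\,dx$, is false. Take $Hg(u)=\Id_{[0,1]}(u)\int g(x)\phi(x)\,dx$ for a bounded, compactly supported $\phi$ (this satisfies both hypotheses of the lemma) and $f=M\Id_E$ a single tall spike: the right-hand side vanishes for all but the $O(1)$ values of $k$ with $t_{k-1}<M\le t_{k+1}$, while $\int_{A_k}|Hf|^p\,dx$ equals a fixed positive constant times $|A_k\cap[0,1]|>0$ for every $k\le -1$. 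The structural obstruction is that after balancing $t_k$ you must control $\sum_k 2^{k(p-1)/(p_0-1)}\|f_{>t_k}\|_1^p$ by $\|f\|_p^p$, and the $p$-th power of $\|f_{>t_k}\|_1=\sum_{j\ge k}\|f\Id_{\{t_j<|f|\le t_{j+1}\}}\|_1$ is not a sum of slab contributions; decoupling it by H\"older produces factors of the slab measures to the power $p-1$, which $\|f\|_p^p$ does not control. A repair exists — keep the distribution-function formula $\int_{A_k}|Hf|^p\,dx=p\int_0^\infty\alpha^{p-1}|\{x\in A_k:|Hf(x)|>\alpha\}|\,d\alpha$ and split $f$ at a level depending on the output height $\alpha$ rather than on the annulus index $k$ — but that amounts to re-deriving the weighted weak-type Marcinkiewicz theorem, which the paper obtains in one stroke by the change-of-measure trick.
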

We will use this lemma to lift $L^2\to L^2$ mappings to $L^p\to L^p$ mappings.
\begin{proof}[Proof of Lemma \ref{INT001} \cite{SY14, HP92}]
Let $ \tilde H(f) (x) = |x|^{1/(p_0-1)} Hf(x) $ and a measure $d\mu(x)  = |x|^{-p_0/(p_0-1)}d x$.
One can see that 
\begin{align}\label{IN1}
 \|\tilde H\|_{L^{p_0}(dx)\to L^{p_0}(d\mu)} \leq A.
\end{align}
We also have
\begin{align*}
 |\tilde Hf(x)| \leq |x|^{1/(p_0-1)} \|Hf\|_{L^\infty(dx)} \leq   B  |x|^{1/(p_0-1)} \|f\|_{L^1(d x)}.
\end{align*}
For every $\lambda >0$,  
\begin{align*}
\mu( \{x\in\mathbb R: |\tilde Hf(x)|>\lambda\}) 
&\leq \mu( \{x\in\mathbb R:B  |x|^{1/(p_0-1)} \|f\|_{L^1(d x)}>\lambda\})
\\
&
= \mu( \{x\in\mathbb R:|x|>\big(\lambda \cdot (B \|f\|_{L^1(d x)})^{-1}\big)^{p_0-1})\}
\\
&=\int _{\lambda \cdot (B \|f\|_{L^1(d x)})^{-1})^{p_0-1}} |x|^{-p_0/(p_0-1)}dx
\\
&= \frac {2}{p_0-1} B \|f\|_{L^1(d x)}/\lambda
\end{align*}
and thus 
\begin{align}\label{IN2}
\|\tilde H \|_{{L^1(d x)}\to L^{1,\infty}(d\mu ) } \leq  \frac {2}{p_0-1} B.
\end{align}
Interpolating (\ref{IN1}) and (\ref{IN2}) yields (\ref{in}) with $C \sim B^{\frac{p_0/p-1}{p_0-1}}A^\frac{p_0-p_0/p}{p_0-1}$.
\end{proof}

%
%
%
%
%
%
%
%
%
%
%
%
%
%
%

\section{Preliminary}\label{res}
In the previous section, we introduced the main analytic tools for the proof of the main theorem. 
A crucial step in applying these tools is to establish useful lower bounds of $|S''_{xy}|$, which is the objective of the current section. 
 More precisely, we employ the resolution algorithm developed in \cite{X2013} to decompose 
a neighborhood of the origin (assuming $S''_{xy}(0,0)=0$) into finitely many regions on which
$S''_{xy}$ behaves like a monomial. This section is essentially the same as Section 3 in \cites{GX15}.

To begin with, we introduce the following concepts that are related to the Newton polygon.  
Let $P(x,y)$ be a real analytic function defined on some neighborhood of the origin. In the proof of our main theorem,
we will take $P =S''_{xy}$.  
Write the Taylor series expansion of $P$ as 
\begin{align*}P(x,y)  = \sum\limits_{p,q\in \mathbb N}c_{p,q}x^py^q
\end{align*}
and drop all the terms with $c_{p,q} =0$ in the above expression.
The Newton polygon (not the reduced Newton polygon) of $P$, denoted by $\mathcal N(P)$,  is the 
convex hull of the union of $[p,\infty)\times [q,\infty)$ for all
$(p,q)$ with $c_{p,q} \neq 0$.  
The Newton diagram $\mathcal D(P)$ is the boundary of $\mathcal N(P)$.    
This diagram consists of two non-compact edges, a finite (and possibly empty) collection of compact edges $\mathcal E(P)$, and a finite collection of vertices $\mathcal V(P)$. The vertices and the edges are called the faces of the Newton polygon and the set of faces is denoted by $\mathcal F(P)$. The vertex that lies on the bisecting line $p=q$, or if no such vertex exists, the edge that intersects $p =q$ is called 
the main face of $\mathcal N(P)$.
For $F\in \mathcal F(P)$, define 
\begin{align}\label{edgeP}
P_F(x,y) = \sum\limits_{(p,q)\in F} c_{p,q} x^p y^q.
\end{align}
The set of supporting lines of $\mathcal N(P)$, denoted by $\mathcal {SL}(P)$, are the lines that intersect the boundary of $\mathcal N(P)$ and do not intersect any other points of $\mathcal N(P)$. 
Notice that each supporting line contains at least one vertex of $\mathcal N(P)$ and each edge of $\mathcal N(P)$ lies in exactly one supporting line. Thus, 
we will also identify an edge with the supporting line containing it.  
There is a one-to-one correspondence $\mathcal M$ between $\mathcal {SL}(P)$ and the set of numbers $[0,\infty]$, given by defining
$\mathcal M(L)$ to be the negative reciprocal of the slope of $L$ for each $L\in \mathcal {SL}(P)$. 
We will often use the notation $L_m\in \mathcal {SL}(P)$ to refer the supporting line with $\mathcal M (L_m)= m$.

\begin{theorem}\label{rs90}
For each analytic function $P(x,y)$ defined in a neighborhood of $0\in\mathbb R^2$, 
there is an $\epsilon>0$ such that, up to a measure zero set, one can 
partition $U= (0,\epsilon)\times (-\epsilon,\epsilon)$ into a finite collection of `curved triangular' regions
$\{
U_{n,g, {\bsb \alpha}, j}
\}
$
on each of which $P(x,y)$ behaves like a monomial in the following sense. 
For each $U_{n,g, {\bsb \alpha}, j}$, there is change of variables 
\begin{align}\label{CV01}
\rho_n^{-1}(x_n,y_n)=(x,y) 
\end{align}
given by 
\begin{align}\label{change08}
\begin{cases}
 x_n= x 
 \\ 
 y_n= \gamma_n(x) +y_n   x^{m_0+\dots+m_{n-1}}  \end{cases}
\end{align}
with $\gamma_n(x)$ either given by a convergent fractional power series  
 \begin{align}\label{gamma1x}
 \gamma_n(x) = \sum\limits_{k=0}^{\infty} r_kx^{m_{0}+m_{1}+\cdots+m_k},
 \end{align}
or  a polynomial of some fractional power of $x$
 \begin{align}\label{gamma2x}
 \gamma_n(x) = \sum\limits_{k=0}^{n-1} r_kx^{m_{0}+m_{1}+\cdots+m_k},
 \end{align}
such that for any pre-seclected $K\in \mathbb N$, for all $0\leq a, b\leq K$ and $(x,y)\in U_{n,g, {\bsb \alpha}, j}$ one has
\begin{align}
&|P_n(x_n,y_n)| \sim |x_n^{p_{n}}y_n^{q_{n}}| \label{key1}
\\
&|\partial_{x_n}^{a} \partial_{y_n}^{b}P_n(x_n,y_n)| \lesssim  \min\{1, | x_n^{p_{n}-a}y_n^{q_{n}-b} \}\label{key2}|
\end{align}
and
\begin{align}\label{new}
| \partial_{y}^{b}P(x,y)|  \lesssim \min\{1, |x^{p_{n} - b(m_0+\dots +m_{n-1})}y_n^{q_{n}-b}|\}.
\end{align}
Here $P_n$ is defined by
$$
P_n(x_n,y_n) = P (\rho_n^{-1} (x_n,y_n)) = P(x,y)
$$
and $(p_n,q_n)$ is some vertex of the Newton polygon of $P_n$. 
\end{theorem}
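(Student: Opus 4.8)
The plan is to construct the partition $\{U_{n,g,\bsb\alpha,j}\}$ iteratively, following the resolution scheme of \cite{X2013}. At each stage $n$ we work in coordinates $(x_n,y_n)$ related to $(x,y)$ by the substitutions \eqref{change08}, where the curve $y=\gamma_n(x)$ is obtained by successively peeling off the leading fractional-power terms of the roots of $P$ viewed as a (multivalued) function of $y$ over $x$. Concretely, starting from $P_0 = P$, if $P_n$ already factors (after extracting a monomial $x_n^{p_n}y_n^{q_n}$) as a unit times a product of Weierstrass-type factors whose roots vanish to positive fractional order in $x_n$, we read off the exponent $m_n$ and coefficient $r_n$ governing the lowest-order branch, substitute $y_n = r_n x_n^{m_n} + y_{n+1}x_n^{m_n}$ (equivalently the cumulative form \eqref{gamma1x}–\eqref{gamma2x}), and pass to $P_{n+1}$. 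The key structural input is that the Newton polygon of $P_{n+1}$, relative to the new vertex $(p_{n+1},q_{n+1})$ obtained by translating along the edge selected by the slope $-1/m_n$, is strictly simpler — its main face moves closer to the axes — so the algorithm terminates after finitely many steps (this is the Newton–Puiseux/Hironaka mechanism specialized to two variables). On the regions where no further branch separation is needed, i.e.\ where on the relevant bidyadic box one coordinate variable is already bounded away from the singular locus, $P_n$ is genuinely comparable to the monomial $x_n^{p_n}y_n^{q_n}$, and we freeze the partition there, subdividing the leftover angular sectors into the ``curved triangular'' pieces $U_{n,g,\bsb\alpha,j}$ indexed by the stage $n$, the chosen face $g$, the multi-exponent $\bsb\alpha$, and a dyadic parameter $j$.

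Granting the combinatorial termination, the analytic estimates \eqref{key1}–\eqref{new} are then verified region by region. Estimate \eqref{key1} is exactly the statement that, after all the branch-extracting changes of variables, the unit part of $P_n$ is bounded above and below on $U_{n,g,\bsb\alpha,j}$; this follows because the remaining roots of $P_n$ (as a function of $y_n$) are all of size $\gtrsim 1$ relative to the scale $|x_n|^{m_0+\dots+m_{n-1}}$ on that region, so no cancellation occurs. For \eqref{key2} one differentiates the factored form: each $y_n$-derivative either hits the monomial, lowering an exponent by one, or hits a root factor, and on the region in question the latter contributes only bounded factors; the $x_n$-derivatives are handled the same way using that $\gamma_n$ and its derivatives are controlled by the geometry of the Newton polygon. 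The $\min\{1,\cdot\}$ truncation simply records that $|P_n|\lesssim 1$ on a bounded neighborhood. Finally \eqref{new} is obtained by transferring \eqref{key2} back to the original $(x,y)$ coordinates through the chain rule applied to \eqref{change08}: each $\partial_y$ becomes $x^{-(m_0+\dots+m_{n-1})}\partial_{y_n}$, which accounts for the shift $p_n - b(m_0+\dots+m_{n-1})$ in the exponent of $x$, while the factor $x^{m_0+\dots+m_{n-1}}$ from $\partial y_n/\partial y$ is what makes the powers match; lower-order terms from the chain rule (derivatives of $\gamma_n$) are absorbed because they carry strictly higher powers of $x$.

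I would organize the write-up as: (i) a precise description of one step of the algorithm — selecting the edge of $\mathcal N(P_n)$ through which the bisecting line passes, or the relevant vertex, and the associated change of variables, with the resulting $P_{n+1}$; (ii) a termination lemma showing the relevant Newton-polygon invariant (e.g.\ the number of remaining compact edges, or the height of the main vertex) strictly decreases; (iii) the definition of the curved triangular regions and the verification that they cover $U$ up to measure zero; (iv) the pointwise bounds \eqref{key1}, \eqref{key2}, \eqref{new}. Since the excerpt explicitly says ``we will sketch the proof of the resolution algorithm but refer the readers to \cite{X2013}, as well as \cite{GX15} for rigorous details,'' I would present (i)–(iv) at the level of a detailed sketch, quoting \cite{X2013} for the fully rigorous bookkeeping.

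The main obstacle is step (ii) — controlling the bookkeeping of the iteration so that it provably terminates and so that at termination the monomial comparability \eqref{key1} actually holds uniformly on each region. The delicate point is that a branch of $P$ can require infinitely many Puiseux terms, so one cannot extract it completely; the algorithm must instead stop once the \emph{remaining} branches are well-separated from the current scale, and one has to argue that this separation is detected after finitely many steps by a monotone decrease of a discrete Newton-polygon quantity. Keeping the constants in \eqref{key2}–\eqref{new} uniform across the finitely many regions (the number of which is itself bounded only after the termination argument, and may grow with the preassigned $K$) is the other subtle bit, and it is here that one leans hardest on the analyticity of $P$ and on the estimates already worked out in \cite{X2013}.
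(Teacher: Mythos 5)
Your overall scheme — iterate by selecting a face of $\mathcal N(P_n)$, peel off the leading Puiseux term of a branch, change variables, and repeat — is the same as the paper's (and \cite{X2013}'s). The genuine gap is in your step (ii), the termination argument, which is where the real content of this theorem lives. You claim that $\mathcal N(P_{n+1})$ ``is strictly simpler --- its main face moves closer to the axes --- so the algorithm terminates after finitely many steps,'' and later propose a termination lemma via ``the number of remaining compact edges, or the height of the main vertex strictly decreases.'' Neither of these invariants strictly decreases. The bookkeeping identity \eqref{key88} only gives $q_{j+1,l}=s_j\leq q_j$, so the height of the leftmost vertex is non-increasing but can remain \emph{constant} for arbitrarily many stages --- exactly when the dominant compact edge is a pure power $(y_j-r_jx_j^{m_j})^{s}$ --- and the number of compact edges can likewise sit at one indefinitely. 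The paper explicitly warns: ``one may hope each such chain is finite, unfortunately this is not necessarily the case.'' Your ``Newton--Puiseux/Hironaka mechanism'' framing papers over this: Newton--Puiseux branches are genuinely infinite series, so naive iteration does not halt.

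The correct mechanism (as in the paper and \cite{X2013}) is two-pronged and different from your suggestion to ``stop once the remaining branches are well-separated from the current scale.'' First, one shows (Lemma 4.6 of \cite{X2013}) that any non-terminating chain eventually stabilizes in the pure-power regime: for $n\geq n_0$, $\mathcal N(P_n)$ has a single compact edge with $P_{n,E_n}=c_n(y_n-r_nx_n^{m_n})^{s_{n_0}}$. On such a chain one then performs the \emph{infinite} change of variables \eqref{gamma1x} --- the full convergent Puiseux series for the remaining branch --- which collapses the chain in one additional step; this is precisely why the theorem statement allows $\gamma_n$ to be either a polynomial \eqref{gamma2x} or an infinite series \eqref{gamma1x}, a distinction your sketch does not use. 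Second, overall finiteness of the decomposition comes not from a per-chain decreasing invariant but from a separate bound on the total number of branches in the iteration tree (Lemma 4.5 of \cite{X2013}). Without these two ingredients your step (ii) has no proof, and the regions $U_{n,g,\bsb\alpha,j}$ are not even well-defined as a finite collection.
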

The reason we phrased this theorem only in the right-half plane is to avoid writing the absolute value of $x$. Indeed, 
by changing $x$ to $-x$, the above theorem also applies to $U=(-\epsilon, 0)\times (-\epsilon, \epsilon)$, consequently to $U= (-\epsilon,\epsilon)\times (-\epsilon,\epsilon)$ and any its open subset. 
See \cite{X2013} for a proof of this theorem; below we provide only a sketch of the theorem for the purpose of outlining some useful ideas and introducing some terminology for later sections. In particular, we will need to further decompose each $U_{n,g, {\bsb \alpha}, j}$ into ``rectangular boxes''.

\subsection{A sketch of the proof}

Let $U$ be as in the theorem above. 
Set 
\begin{align*}
\begin{cases}
U_0 = U,
\\
P_0 = P,
\\
(x_0,y_0) = (x,y),
\end{cases}
\end{align*}
which can viewed as the original input for the algorithm below. 
The subindex 0 is used here to indicate the 0-th stage of the iteration. At each stage, iterations are always performed on some triple $[U, P, (x,y)]$. Here $P$ is a convergent (fractional) power series 
 in $(x,y)$, which are local coordinates centered at the origin, and  $U=(0,\epsilon)\times (-\epsilon,\epsilon)$ with $\epsilon>0$ being a small number depending on $P$. For convenience, we refer to such a triple $[U, P, (x,y)]$ as a standard triple. 

Choose one supporting line $L_m\in \mathcal {SL}(P)$ which contains at least one vertex $(p_0,q_0)=V\in\mathcal V(P) $. 
Let $E_l$ and $E_r$ be the two edges on the left and right of $V$, respectively. Set $m_l =\mathcal M(E_l)$ and $m_r =\mathcal M (E_r)$. Then $0\leq m_l\leq m \leq  m_r\leq \infty$. 
We will consider the region $|y|\sim x^m$ in each of the following three cases: 
\textbf{Case (1).} $m_l<m<m_r$, 
\textbf{Case (2).} $m=m_l$ and 
\textbf{Case (3).} $m=m_r$. 
In \textbf{Case (1)}, we have informally that the vertex $V$ `dominates'  $P(x,y)$, while in \textbf{Case (2)} and \textbf{Case (3)} we have that $E_l$ dominates $P(x,y)$ and $E_r$ dominates $P(x,y)$,  respectively.

In \textbf{Case (1)}, $p_0+m{q_0} < p+mq$ for any other $(p,q)$ with $c_{p,q} \neq 0$. 
Thus in the region $|y|\sim x^m$, when $|x|$ it sufficiently small, the monomial 
$$
|P_V(x,y)|=|c_{p_0,q_0}x^{p_0}y^{q_0}|\sim |x^{p_0+mq_0}|
$$ 
is the dominant term in $P(x,y)$, since 
$$
|P(x,y)-P_V(x,y)|=O(x^{p_0+mq_0+\nu}) \q \textrm {for some} \q\nu>0.
$$
 Thus 
\begin{align}
P(x,y)\sim P_V(x,y)=c_{p_0,q_0}x^{p_0}y^{q_0}
\end{align}
and we can make 
$$
\frac {|P(x,y)-P_V(x,y)|}{|P_V(x,y)|}
$$
to be arbitrarily small by choosing $\epsilon $ sufficiently small, i.e., by shrinking the region $U$.  
Moreover, for any pre-selected $a, b\geq 0$, 
\begin{align}\label{DE01}
|\p_x^a\p_y^b P(x,y)|\lesssim \min\{1, |x^{p_0-a} y^{q_0-b}|\}.
\end{align}
We refer such a region as a `good' region defined by the vertex $V$ and denote it by $U_{0,g,V}$. 
\begin{center}
\begin{tikzpicture}[scale=0.8]
\draw [<->,thick] (0,9) node (yaxis) [above] {$y$}
        |- (9,0) node (xaxis) [right] {$x$};
  \draw[help lines] (0,0) grid (8,8);
     \fill[gray!30] (2,8)--(2,4)--(3,2)--(5,1)--(8,1)--(8,8);
       \draw[thick] (5,0)--node[below left] {$l$}(0,5) ;
       \fill (5,1)   circle (2pt) (3,2) circle (2pt) (2,4)  circle (2pt);
        \draw [thick]	(9,1)--
        			(5,1) node[below left] {$V_3$}--
			(3,2)node[below left] {$V_2$}--
			(2,4)node[below left] {$V_1$}--
			(2,9);
	\node at (5,5) {Newton polygon};
	\node at (2.7,4){(2,4)};
	\node at (3.5,2.3) {(3,2)};
	\node at (5,1.5) {(5,1)};
	\node at (11,6) {$P(x,y) = x^5y-x^3y^2+x^2y^4$};
	\node at (10.2,5) {$P_{V_2}(x,y) = -x^3y^2$};
	\node at (11,4) {$|x|^{2}\lesssim |y| \lesssim |x|^{1/2}$};
	\node at (7,9.5) { \emph{Case (1): The vertex} $V_2$\emph{ is dominant, where} $1/2< m < 2$};
	\node at (4.5,-1) {Figure 1.};
\end{tikzpicture}
\end{center}

\begin{center}
\begin{tikzpicture}[scale=0.8]
\draw [<->,thick] (0,9) node (yaxis) [above] {$y$}
        |- (9,0) node (xaxis) [right] {$x$};
  \draw[help lines] (0,0) grid (8,8);
     \fill[gray!30] (2,8)--(2,4)--(3,2)--(5,1)--(8,1)--(8,8);
       \fill (5,1)   circle (2pt) (3,2) circle (2pt) (2,4)  circle (2pt);
        \draw [thick]	(9,1)--
        			(5,1) node[below left] {$V_3$}--
			(3,2)node[below left] {$V_2$}--
			(2,4)node[below left] {$V_1$}--
			(2,9);
	\node at (5,5) {Newton polygon};
	\node at (2.7,4){(2,4)};
	\node at (3.5,2.3) {(3,2)};
	\node at (5,1.5) {(5,1)};
	\node at (11,6) {$P(x,y) = x^5y-x^3y^2+x^2y^4$};
	\node at (11,5) {$P_{V_1V_2}(x,y) = -x^3y^2+x^2y^4$};
	\node at (12,4) {$|y|\sim |x|^{1/2}$};
	\draw[thick] (0,8)--(4,0);
	\node at (6,9.5) { \emph {Case(2): The edge} $V_1V_2$ \emph{ is dominant, where} $ m =1/ 2$};
		\node at (4.5,-1) {Figure 2.};

\end{tikzpicture}
\end{center}

 \textbf{Case (2)} and \textbf{Case (3)} are essentially the same, but are much more complicated than \textbf{Case (1)}.  We focus on \textbf{Case (2)}. 
 Set  $m_0 =m_l$, $P_0 =P$ and $E_0 =E_l$. One can see that $p_0+m_0q_0 =p+m_0q$ for all $(p,q)\in E_0$ and $p_0+m_0q_0 <p+m_0q$
  for all $(p,q) \notin E_0$ and $c_{p,q}\neq 0$. 
 Then for all $(p,q)\in E_0$, 
 $|x^{p}y^q|\sim |x^{p_0}y^{q_0}|$ in the region $|y| \sim |x|^{m_0}$. Set $y  = rx^{m_0}$ and let $\{r_*\} $ be the set of nonzero roots of $ P_{0,E_0}(1,y)$ with orders $\{s_*\}$.  
Let $\rho_0>\epsilon$ be a small number chosen so that the $\rho_0$ neighborhoods of 
all the roots $r_*$ will not overlap. 
 If $|r-r_*| \geq\rho_0$ for all roots $r_*$ and if $\epsilon$ sufficiently small,  then  
 \begin{align}\label{cool}
|P(x,y)|= |P_0(x,y)| \sim_{\rho_0} |P_{0,E_0}(x,y)|\sim |x^{p_0}y^{q_0}|\sim x^{p_0+m_0{q_0}},
 \end{align}
which means the edge $E_0$ dominates
 $P_0(x,y)$. Similarly, for any preselected $a, b \geq 0$, (\ref{DE01}) still holds.  
 We refer such regions as `good' regions defined by the edge $E_0$ and denote them by $U_{0, g, E_0, j}$'s. 
It is of significance to observe that one can enlarge each $U_{0, g, E_0, j}$ by decreasing the value of $\rho_0$, while 
$|P(x,y)|\sim  |x^{p_0}y^{q_0}|$ still holds but with the new implicit constant depending on the new $\rho_0$. 
However, one should not worry about the dependence on those $\rho_0$, for they will be chosen to 
rely only on the original function $P$. We will use this observation to further decompose  $U_{0, g, E_0, j}$ into rectangular boxes later.

 For each root $r_0\in \{r_*\}$, there is an associated `bad' region defined by $y=rx^{m_0}$ where $|r-r_0|< \rho_0$ and $0<x<\epsilon$.
 We say that this is a bad region defined by the edge $E_0$ (and the root $r_0$).
 Each bad region is carried to the next stage of iteration via change of variables. Set
 \begin{align*}
 \begin{cases}
 x =x_1,
 \\
 y =x_1^{m_0}(r_0+y_1).
 \end{cases}
 \end{align*}
 Notice that this bad region is contained in the set where $|y_1 |= |r-r_0| <\rho_0$, so that in $(x_1,y_1)$ coordinates, the bad region is now $0<x_1<\epsilon$ and $-\rho_0<y_1<\rho_0$. 
Set
 \begin{align*}
 P_1(x_1,y_1) = P_0(x_1, r_0 x_1^{m_0} +y_1x_1^{m_0}), 
 \end{align*}
 and for any choice of $\epsilon_1 > \rho_0$, let
 \begin{align*}
 U_1 =\{(x_1,y_1): 0<x_1<\epsilon_1, |y_1|< \epsilon_1 \}.
 \end{align*}
 Then $[U_1, P_1, (x_1,y_1)]$ is a standard triple and same arguments can be applied again. 
Notice that the edge $E_0$ in $\mathcal N(P)$ is ``collapsed'' into the leftmost vertex of $\mathcal N(P_1)$. More precisely,  
 if $(p_{1,l}, q_{1,l}) $ is the leftmost vertex of $\mathcal N(P_1)$, then 
 \begin{align}\label{key88}
(p_{1,l}, q_{1,l}) = (p_0 +m_0q_0, s_0).
 \end{align}
 This is a crucial bookkeeping identity for understanding the behavior of $P(x,y)$ in later stages of the iteration. 
Now we repeat the previous arguments. If either a vertex or an edge is dominant, then 
\begin{align*}
|P_1(x_1,y_1)| \sim |x_1^{p_1} y_1^{q_1}|
\end{align*} 
 for some vertex $(p_1,q_1)$ of $\mathcal N(P_1)$. Otherwise
there is an edge $E_1\in\mathcal E(P_1)$ and a non-zero root $r_1$ of $P_{1, E_1}(1, y_1)$ together with a neighborhood 
$(r_1 -\rho_1,r_1+\rho_1)$ that define a bad region. 
 Change variables to
  \begin{align*}
 \begin{cases}
 x_1 =x_2
 \\
 y_1 =x_2^{m_1}(r_1+y_2)
 \end{cases}
 \end{align*}
 and set 
 \begin{align*}
 P_2(x_2,y_2)  = P_1( x_2, x_2^{m_1}(r_1+y_2)).
 \end{align*}
Now we iterate the above argument. In the $k$-th stage of iteration, if either a vertex or an edge is dominant, then
\begin{align}\label{ESk}
|P(x,y)| = |P_k(x_k,y_k)|\sim |x_k^{p_k}y_k^{q_k}|
\end{align}
and 
\begin{align}\label{DEk}
|\p_{x_k}^a\p_{y_k}^b P_k(x_k,y_k) |\lesssim \min\{ 1, |x_k^{p_k-a}y_k^{q_k-b}| \}.
\end{align}
Here $(p_k,q_k)$ is a vertex of $\mathcal N(P_k)$ and 
\begin{align*}
\begin{cases}
x_{j-1} = x_{j},
\\
y_{j-1} = (r_{j-1}+y_j)x^{m_{j-1}},
\end{cases}
\end{align*}
for $1 \leq j\leq k$, i.e.,
\begin{align*}
\begin{cases}
x = x_0 =\dots = x_k
\\
y = y_0 = r_0x_0^{m_0}+r_1x_0^{m_0+m_1}+\dots +r_{k-1}x_0^{m_0+\dots+m_{k-1}}+y_kx^{m_0+\dots +m_{k-1}}.
\end{cases}
\end{align*}
The iterations above form a tree structure and each branch of this tree yields a chain of standard triples  
\begin{align}\label{chain70}
[U_0,P_0,(x_0,y_0)]\to[U_1,P_1,(x_1,y_1)]\to\dots\to [U_k,P_k,(x_k,y_k)]\to\cdots.
\end{align}
One may hope each such chain is finite, unfortunately this is not necessarily the case. For those that terminate after finitely many steps, say at the $n$-th stage of iteration, it must be the case that no bad regions are generated by $[U_n,P_n, (x_n,y_n)]$, and so we obtain Theorem \ref{rs90} with the corresponding change 
of variables given by (\ref{gamma2x}).  

For each of those branches that does not terminate,
one can show that (see Lemma 4.6 \cite{X2013}), there exists  $n_0\in\mathbb N$ such that for $n\geq n_0$, 
 $\mathcal N(P_n)$ has only one compact edge $E_n$ and $P_{n,E_n}(x_n,y_n)= c_n(y_n-r_nx_n^{m_n})^{s_{n_0}}$,
 where $s_{n_0}$ is the order of the root $r_{n_0}$. On such a branch, one should instead perform  
 the following change of variables:
 $$
y_{n_0} =  y_{n_0+1}x_{n_0}^{m_{n_0}}+ \sum\limits_{k=n_0}^{\infty} r_kx_{n_0}^{m_{n_0}+m_{n_0+1}+\cdots+m_k}.
 $$
Under this alternate change of variables, iteration along this branch stops immediately at stage $n_0+1$. This corresponds to the change of variables in (\ref{gamma1x}). 
Since the total number of branches in the iteration is bounded above (see Lemma 4.5  in \cite{X2013}) the modified algorithm fully terminates after finitely many steps. In particular, the total number of good regions are also finite. 

We now give some explanation of the subindices $({n,g, {\bsb \alpha}, j})$ in Theorem \ref{rs90}. 
The letter `$n$' represents the stage of iterations, `$g$' indicates the region is `good',  `${\bsb \alpha}$' contains the information necessary to make the change of variables 
from $[U_0,P_0,(x_0,y_0)]$ to a specific $[U_n,P_n,(x_n,y_n)]$ and `$j$' is used to list the all the `good' regions generated by  $[U_n,P_n,(x_n,y_n)]$. 
The cardinality of the tuples $({n,g, {\bsb \alpha}, j})$ is finite and depends on the original function $P$.
We often use $U_{n,g}$ to represent  $U_{n,g}= U_{n, g, \bsb\alpha,j}$ for some $\bsb\alpha$ and some $j$. 

The identity (\ref{key88}) is very useful to help estimate $P(x,y)$ in higher stages of the iteration. Indeed, 
consider the $n$-th stage of iteration and let $U_{n, g, {\bsb \alpha}, j}$ be a good region. Then 
$|P(x,y)| =|P_n(x_n,y_n)| \sim |x_n^{p_n} y_n^{q_n}|$, for some vertex $V_n=(p_n,q_n)$ of $\mathcal N(P_n)$. 
For $0 \leq j \leq n-1$,
let $(p_{j, l},q_{j,l})$ be the leftmost vertex of $\mathcal N(P_j)$, and furthermore let $(p_j,q_j)$ be the left vertex of the
edge $E_j$ and let $r_j$ be the root that governs the next stage of iteration.
Assume $s_j$ is the order of the root  $r_j$. 
 We claim that for $0\leq j\leq n-1$, 
\begin{align}\label{ind1}
\begin{cases}
p_{j+1,l} = p_{j} +q_{j}m_j , 
\\
 q_{j+1,l}  =s_j \leq q_j,
\\
p_{j}+q_jm_j \leq p_{j,l}+q_{j,l}m_j,
\\
\end{cases}
\end{align}
and if $L_{m_n}$ is a supporting line of $\mathcal N(P_n)$ through $V_n$, then 
\begin{align}\label{IT5}
p_{n}+m_n q_n \leq p_{n,l}+m_n q_{n,l}  \leq  p_0+m_0q_0 +s_0\sum_{1\leq j \leq n-1} m_j.
\end{align}
Indeed the first two identities in (\ref{ind1}) are of the same nature as the one in (\ref{key88}). 
The third one comes from the fact that the vertex $(p_{j,l},q_{j,l})$ lies on or above $E_j$, which is the
 supporting line through $(p_j,q_j)$ of slope $-1/m_j$. 
Iterating (\ref{ind1}) yields (\ref{IT5}).

Now we come to the crucial observation which is used to deduce
 structural information of the phase for the exceptional cases.  
In Section \ref{minor}, 
we will see that 
such exceptional cases occur 
in some good region
when $(p_n, q_n) = (p_{n,l}, q_{n,l})$, $p_0=0$ and $q_n =q_{0,l}$.  
Together with the second identity of \eqref{ind1}, they imply $q_{0,l}= q_0 =s_0 = q_{1,l} =q_1 =s_1 = \cdots  s_{n-1}=q_{n,l}=q_n$.  
Consequently, for $0\leq j \leq n-1$, the Newton polygon of $P_j$ has only one compact edge $E_j$ and the restriction of $P_j$ to this edge is 
$$
P_{j, E_j} (x_j, y_j)= c_jx_j^{p_j}(y_j -r_jx_j)^{s_j} = c_jx_j^{p_j}(y_j -r_jx_j)^{q_{0,l}}. 
$$
Therefore in this good region 
\begin{align}\label{cru99}
P(x,y) = P_n(x_n,y_n) \sim x_n^{p_n}y_n^{q_n} \sim (y-\gamma_n(x))^{q_{0,l}}. 
\end{align}

\subsection{A smooth partition}
Let $U$ be a small neighborhood of the origin such that one can apply Theorem \ref{rs90} to it. 
 Divide $U$ (up to a set of measure zero) into four different regions: $U_E$, $U_N$, $U_W$ and $U_S$
 (representing the east, north, west and south portions), defined by 
\begin{align}
\begin{cases}
U_E= \{(x,y)\in U: x>0, -Cx < y < Cx\}
\\
U_W = -U_E
\\
U_N = \{(x,y)\in U: C |x| < y \}
\\
U_S= -U_N.
\end{cases}
\end{align}
We choose the above constant $C>0$ such that $2^{-10}C$ is greater than the absolute value of any root of 
$P_E(1,y)$ or $P_E(-1,y)$, where $E$ is the edge of slope $-1$. 
We shall then define smooth functions $\phi_E$, $\phi_N$, $\phi_W$ and $\phi_S$ whose supports are contained in 
$\frac 2 3 U$ and such that for any function $\Psi$ supported in $\frac 1 2 U$, the
following holds :
\begin{align*}
\Psi(x,y) = (\phi_E(x,y)+\phi_N(x,y)+\phi_W(x,y)+\phi_S(x,y))\Psi(x,y) \ \ {\rm for}\,\, (x,y)\neq 0.   
\end{align*} 
Moreover, $\phi_E$ is essentially supported in $U_E$, in the sense that $\supp \phi_E\subset  U^*_E$, where 
\begin{align}
U_E^* = \{(x,y)\in U: -2Cx <y <2C x\},
\end{align} 
and similarly for $\phi_N$ and so on.  

In what follows, we focus on $\phi_E$ and $U_E$. Similar results for $U_W$, $U_N$ and $U_S$ can be obtained
by changing $x$ to $-x$, switching $y$ and $x$, and making both changes, respectively. 
Notice $|y|\lesssim |x|$ for $(x,y)\in U_E^*$, and by shrinking $U$ if necessary, one has $m_0 \geq 1$ if $U_{n,g,{\bsb\alpha},j}$ has nonempty intersection with $U_E^*$, where $m_0$ is the exponent of the first term of $\gamma_n(x)$ in (\ref{gamma1x}) or (\ref{gamma2x}). 
We will partition $\phi_E$ into a sum of smooth functions $\phi_\mathcal R$
such that each $\phi_\mathcal R$ is essentially supported in a box $\mathcal R$ which is essentially contained in one $U_{n,g,{\bsb\alpha},j}$.  
Consequently $P(x,y)$ still behaves like a constant in $\supp \phi_\mathcal R$. 
We briefly describe how to obtain such partition and refer the readers to \cite{GX15} for details. First fix one good region $U_{n,g}$, 
which is contained in $x_n^{m_n'}\lesssim y_n  \lesssim x_n^{m_n}$ and in which $|P_n(x_n,y_n)|\sim |x_n^{p_n}y_n^{q_n}|$. 
Let $\sigma$ and $\rho$ be dyadic numbers and set $U_{n,g}(\sigma,\rho)$ be a subset of $U_{n,g}$ with 
 $x_n\sim \sigma$ and $y_n\sim \rho \sigma^{m_n}$. Notice $0<\sigma\lesssim 1$, $\sigma^{m_n'-m_n}\lesssim \rho \lesssim 1$
 and $P_{n}(x_n,y_n)$ is comparable to a fixed value in $U_{n,g}(\sigma,\rho)$.  
 Let $\Delta x$  (and $\Delta y$) be the length of a typical $x$-cross ($y$-cross res.) section of $U_{n,g}(\sigma,\rho)$. Then 
 \begin{align}
\begin{cases}\label{sim07}
&\Delta x \sim   \rho{\sigma ^{m_0+\dots +m_n} \cdot  \sigma^{1-m_0}}
\\
&\Delta y \sim \rho \sigma ^{m_0+\dots +m_n},
\end{cases} 
\end{align}
if $n\geq 1$ and 
\begin{align}
\begin{cases}\label{sim08}
&\Delta x \sim \sigma 
\\
&\Delta y \sim \rho\sigma ^{m_0} 
\end{cases} 
\end{align}
if $n=0$. We then cover $U_{n,g}(\sigma,\rho)$ by a collection of $\Delta x\times \Delta y$ boxes, denoted by $\{\mathcal R\}_{\mathcal R\in U_{n,g}(\sigma,\rho)}$. 
The collection operators associated to the boxes $\{\mathcal R\}_{\mathcal R\in U_{n,g}(\sigma,\rho)}$ will exhibit the orthogonality properties
(A.3). 
 Indeed, let $\pi_x$ and $\pi_y$ denote the othogonal projections of $\mathbb R^2$ onto the $x$- and $y$-axis, respectively. 
Then there exists a positive constant $C\in\mathbb N$
 independent of $\sigma$ and $\rho$ such that 
 \begin{align}\label{OT67}
\sum_{\mathcal R\in U_{n,g} (\sigma,\rho)} \Id_{\pi_x(\mathcal R)}(x)  \leq C \q 
\textrm{and}\q \sum_{\mathcal R\in U_{n,g} (\sigma,\rho)} \Id_{\pi_y(\mathcal R)}(y) \leq C. 
 \end{align}
Notice that $C$ can be also independent of each $U_{n,g}$, for the cardinality of the collection of good regions $U_{n,g,{\bsb\alpha}, j}$ is finite. 
 
To complete the smooth partition, let $\phi_\mathcal R$ be a non-negative 
smooth function essentially supported in $\mathcal R$, in the sense that it satisfies 
 \begin{align}
\mathcal R\subset  \supp \phi_{\mathcal R} \subset \tau \mathcal R 
 \end{align}
  and 
 \begin{align}
|\partial_x^a \partial_y ^b  \phi_{\mathcal R}(x,y)| \lesssim |\Delta x|^{-a} | \Delta y|^{-b}, \q \textrm {for} \q 0\leq a, b \leq 2.
 \end{align}
 Consequently,  
 \begin{align}
  \displaystyle  \sum_{\sigma}\sum_{\rho}\,\,\,\sum_{\mathcal R\in U_{n,g} (\sigma,\rho)} \phi_{\mathcal R} (x,y)\neq 0\,,\q \textrm {for all} \q(x,y)\in U_{n,g}.
 \end{align}
 Here $\tau >1$ is some fixed number independent of $\sigma$ and $\rho$ such that the dilation $\tau \mathcal R$ is essentially contained in 
 $U_{n,g}(\sigma,\rho)$, in the sense
 $|P(x,y)|=|P_n(x_n,y_n)| \sim   |x_n^{p_n}y_n^{q_n}|$ for $(x,y)\in \tau \mathcal R$.  
We can also control upper bounds of certain derivatives of $P(x,y)$ for $(x,y)\in \tau \mathcal R$. 
Indeed, by the chain rule, $ \p_{y}^b P(x,y)$ is equal to 
 \begin{align}
\p_{y_n}^b P(x,y) \left( \frac{ \p y} {\p{y_n}}\right) ^b =  \p_{y_n}^b P(x,y) \cdot  x^{-b(m_0+\dots+m_{n-1})}.
 \end{align}
By (\ref{DEk}), we have   
for $(x,y)\in \tau \mathcal R$, 
   \begin{align}
| \p_{y}^b P(x,y) | &\lesssim |x_n^{p_n}y_n^{q_n}| \cdot \Delta y ^{-b}  \sim   |P_n(x_n,y_n)| \cdot \Delta y ^{-b}. 
 \end{align}
 Similarly, viewing $y$ as a function of $x$, one has 
 \begin{align*}
 \p_{x} P(x,y) = (\p_{x} P)(x,y)+  (\p_{y} P)(x,y) \frac{ \partial y }{\p x}. 
 \end{align*}
 Since $|\frac {\p y}{\p x}| \lesssim |x|^{m_0-1} $,  $| \p_{x} P(x,y)| $ is bounded by 
  \begin{align*}
& |x_n^{p_n} y_n^{q_n}| |x_n|^{-1} + |x_n^{p_n}y_n^{q_n}| |x^{m_0+\dots+m_{n-1}+m_n}y_n|^{-1}\cdot  |x|^{m_0-1} 
 \end{align*}
 and it is obvious that the later is dominant. Thus $ |\p_{x} P(x,y)|\lesssim |x_n^{p_n}y_n^{q_n}| \Delta x^{-1 }$. 
Analogous calculations for $a =0, 1,$ and $2$ yield that   
  \begin{align*}
| \p_{x}^a P(x,y)| \lesssim  |x_n^{p_n}y_n^{q_n}| \cdot  \Delta x^{-a}\sim |P_n(x_n,y_n)|\cdot  \Delta x^{-a}
 \end{align*}
 for $(x,y)\in \tau\mathcal R$.  
 Finally, we associate the function 
 \begin{equation}
\dfrac {\phi_\mathcal R}{ \sum_{(n,g,{\bsb\alpha},j)}\sum_{\sigma}\sum_\rho\,\,\,\sum_{\mathcal R\in U_{n,g,{\bsb\alpha},j} (\sigma,\rho)} \phi_{\mathcal R} }\cdot 
 \phi_E \label{partou} 
 \end{equation}
to ${\mathcal R}$ (and, for convenience, redefine $\phi_{\mathcal R}$ to equal \eqref{partou}) and therefore 
 \begin{align}
  \phi_E(x,y) = \sum_{(n,g,{\bsb\alpha},j)}\sum_{\sigma}\sum_\rho\,\,\,\sum_{\mathcal R\in U_{n,g,{\bsb\alpha},j} (\sigma,\rho)} \phi_{\mathcal R}(x,y). 
 \end{align}

%
%
%
%
%
%
%
%
%
%
%
%
%
%
%
%
%
%

%
%
%
%
%
%
%
%
%
%
%
%
%
%
%
%
%
%

\section{Reduction to a good region} \label{Red}
In the proof of Theorem \ref{COM}, we lose no generality in assuming $(x_0,y_0) = (0,0)$.  
Let $(k,l)\in\mathbb N^*$ be any vertex of the Newton polygon of the phase $S$ at the origin. Equivalently,  
$(k-1, l-1)$ is any vertex of the Newton polygon of $P(x,y) = S''_{xy}(x,y)$. 
The goal is to establish the following for each such $(k,l)$:
\begin{align}\label{goal77}
\|T_0f\|_{\f {k+l}{k}} \lesssim |\lambda|^{-\f 1 {k+l}}\|f\|_{\f{k+l}{k} }. 
\end{align}
Through out the rest of this paper, we let $(k,l)\in \mathbb N_*^2$ be fixed.  

As it was done in the previous section, we decompose $\phi$ into the sum of four functions supported in the east, north, west and south regions respectively, i.e.  
$$
\phi(x,y) =\phi_E(x,y) +\phi_N(x,y)+\phi_W(x,y)+\phi_S(x,y)\,\,{\rm for}\,\, (x,y)\neq 0 \,.
$$
The bilinear form can be then written as 
$$
\langle T_0f, \, g\rangle   = \mathcal B_E(f,g) +\mathcal B_N(f,g)+\mathcal B_W(f,g)+\mathcal B_S(f,g)   
$$
where 
\begin{align}\label{TR70}
\mathcal B_E(f,g) =\iint e^{i\lambda S(x,y)}f(y)g(x)\phi_E(x,y) dxdy,
\end{align}
and so on. We will focus on $ \mathcal B_E(f,g)$ and the others can be handled similarly. 
Apply the smooth partition of unity from the previous section built from the function $P(x, y) = S''_{xy}(x, y)$;
 we can write
\begin{align*}
\phi_E = \sum_{(n,g, {\bsb\alpha}, j)} \sum_{\sigma, \rho} \sum_{\mathcal R\in U_{n,g,{\bsb\alpha}, j}(\sigma,\rho)} \phi_{\mathcal R},
\end{align*}
where $\sigma$ and $\rho$ are dyadic numbers. Write 
$$
 \mathcal B_E(f,g) =\sum_{(n,g, {\bsb\alpha}, j)} \sum_{\sigma, \rho} \sum_{\mathcal R\in U_{n,g,{\bsb\alpha}, j}(\sigma,\rho)} 
 \mathcal B_{\mathcal R}(f,g)
$$
where $ \mathcal B_{\mathcal R}(f,g)$ is define as in (\ref{TR70}) with $\phi_E$ replaced by $\phi_\mathcal R$. 
Since the number of $  U_{n,g,{\bsb\alpha}, j}$ is finite, it suffices 
to prove the desired estimate in one $U_{n,g,{\bsb\alpha}, j}$. We will focus on $U_{n,g}= U_{n,g,{\bsb\alpha}, j}$ for some 
${\bsb\alpha}, j$. 
Write 
$$
 \mathcal B_{n,g}(f,g) = \sum_{\sigma, \rho} \sum_{\mathcal R\in U_{n,g}(\sigma,\rho)} 
 \mathcal B_{\mathcal R}(f,g). 
$$
Notice that for $(x,y) \in U_{n,g}$ one has 
\begin{align}
|S''_{xy}(x,y)| \sim |x_n^{p_n}y_n^{q_n}|
\end{align}
where
\begin{align}\label{yn}
\begin{cases}
x=x_0=\dots=x_{n}
\\
y =y_0 = \gamma_n(x)+y_nx^{m_0+\dots+m_{n-1}}.
\end{cases}
\end{align}
Here 
\begin{align}
\gamma_n(x) =\sum_{j=0}^* r_0x^{m_0+\dots+r_{j}}
\end{align}
with $*=n-1$ or $\infty$; see (\ref{gamma1x}) and (\ref{gamma2x}). 
Notice that $m_0\geq 1$ since $(x,y) \in \supp \phi _E$.  
Each $\mathcal R$ is a $\Delta x\times \Delta y$ rectangle. Let $\mathcal R_x $ and $\mathcal R_y$ denote the orthogonal projections of 
$\tau \mathcal R$ into the $x$- and $y$-axis respectively. Notice that $|\mathcal R_x| \sim \Delta x$ and $|\mathcal R_y| \sim \Delta y$. 
Lemma \ref{os2} implies 
 \begin{align}\label{OS01}
\| \mathcal B_\mathcal R(f,g) \|  \lesssim |\lambda|^{-\frac 1 2}  \left(\inf_{(x,y)\in \phi_\mathcal R}|P(x,y)|\right)^{-\frac 1 2}
 \left\|f\Id_{\mathcal R_y}\right\|_2\left\|g\Id_{\mathcal R_x}\right\|_2.
  \end{align}
Combining Lemma \ref{size1}, \eqref{OS01} and their interpolation, we have   
\begin{lemma}
For each $1\leq p \leq \infty$, we have 
\begin{align}\label{SZ02}
\| \mathcal B_\mathcal R(f,g) \|\lesssim   \Delta x^{\f 1 p } \Delta y^{\f 1 {p'}} 
 \left\|f\Id_{\mathcal R_y}\right\|_p\left\|g\Id_{\mathcal R_x}\right\|_{p'}.
\end{align}
For $2\leq p \leq \infty$ 
\begin{align}\label{OS02}
\| \mathcal B_\mathcal R(f,g) \|\lesssim  \Delta y ^{1-\f 2 p} 
|\lambda|^{-\frac 1 p}  \left(\inf_{(x,y)\in \phi_\mathcal R}|P(x,y)|\right)^{-\frac 1 p}
 \left\|f\Id_{\mathcal R_y}\right\|_p\left\|g\Id_{\mathcal R_x}\right\|_{p'}
\end{align}
and for $1\leq p \leq 2$
\begin{align}\label{OS02'}
\| \mathcal B_\mathcal R(f,g) \|\lesssim  \Delta x ^{1-\f 2 {p'}} 
|\lambda|^{-\frac 1 {p'}}  \left(\inf_{(x,y)\in \phi_\mathcal R}|P(x,y)|\right)^{-\frac 1 {p'}}
 \left\|f\Id_{\mathcal R_y}\right\|_p\left\|g\Id_{\mathcal R_x}\right\|_{p'}.
\end{align}
\end{lemma}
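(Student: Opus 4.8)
The plan is to get all three estimates from exactly two genuine inputs — the size estimate of Lemma \ref{size1} and the $L^2$ oscillation estimate \eqref{OS01} (which is Lemma \ref{os2} applied on $\tau\mathcal R$) — glued together by bilinear Riesz--Thorin interpolation. First I would record the harmless observation that $\mathcal B_\mathcal R(f,g)$ depends only on $f\Id_{\mathcal R_y}$ and $g\Id_{\mathcal R_x}$, since $\supp\phi_\mathcal R\subset\tau\mathcal R$ and $\mathcal R_x,\mathcal R_y$ are the coordinate projections of $\tau\mathcal R$; so I may replace $f,g$ by those restrictions throughout and regard $\mathcal B_\mathcal R$ as a bilinear form on the fixed pair $(\mathcal R_y,\mathcal R_x)$, which makes the $p=1,\infty$ endpoints below meaningful and the interpolation the textbook one.

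Estimate \eqref{SZ02} is then immediate: $\tau\mathcal R$ is a box of side lengths $\sigma\sim\Delta x$ and $\rho\sim\Delta y$, so Lemma \ref{size1} gives $|\mathcal B_\mathcal R(f,g)|\lesssim\Delta x^{1/p}\Delta y^{1/p'}\|f\Id_{\mathcal R_y}\|_p\|g\Id_{\mathcal R_x}\|_{p'}$ for all $1\le p\le\infty$. Its endpoints $p=\infty$ (bound $\Delta y$, on $L^\infty\times L^1$) and $p=1$ (bound $\Delta x$, on $L^1\times L^\infty$) will serve as interpolation data.

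For \eqref{OS02} I would interpolate, in the Riesz diagram with coordinates $(1/p,1/p')$, between $(\tfrac12,\tfrac12)$, with bound $(|\lambda|\inf_{\supp\phi_\mathcal R}|P|)^{-1/2}$ from \eqref{OS01}, and $(0,1)$, with bound $\Delta y$ from the $p=\infty$ case of \eqref{SZ02}; a point with $2\le p\le\infty$ sits on this segment at weight $t=1-2/p$ on the second endpoint, and the geometric mean $[(|\lambda|\inf_{\supp\phi_\mathcal R}|P|)^{-1/2}]^{1-t}[\Delta y]^{t}$ equals $\Delta y^{1-2/p}|\lambda|^{-1/p}(\inf_{\supp\phi_\mathcal R}|P|)^{-1/p}$, i.e. \eqref{OS02}. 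Estimate \eqref{OS02'} is the mirror image: interpolate between $(\tfrac12,\tfrac12)$ and $(1,0)$ (bound $\Delta x$, the $p=1$ case of \eqref{SZ02}); for $1\le p\le 2$ the weight on $(1,0)$ is $1-2/p'$ and the geometric mean gives $\Delta x^{1-2/p'}|\lambda|^{-1/p'}(\inf_{\supp\phi_\mathcal R}|P|)^{-1/p'}$. Alternatively \eqref{OS02'} follows from \eqref{OS02} by the transpose symmetry $f\leftrightarrow g$, $p\leftrightarrow p'$, $\mathcal R_x\leftrightarrow\mathcal R_y$.

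There is no real obstacle internal to this lemma — all the depth lies in the two cited inputs — so the only things to be careful about are the exponent bookkeeping in the interpolation (done above) and re-confirming the hypotheses behind \eqref{OS01}: that $\phi_\mathcal R$ satisfies \eqref{ps1} and $S$ satisfies \eqref{PS1} on $\tau\mathcal R$ with $\delta_1\sim\Delta x$, $\delta_2\sim\Delta y$, $\mu\sim\inf_{\supp\phi_\mathcal R}|P|$. The bounds on $\phi_\mathcal R$ come from its construction in Section \ref{res}, and the control of $\partial_x\partial_y S=P$ and $\partial_x\partial_y^2 S=\partial_y P$ on $\tau\mathcal R$ — namely $|P|\sim|x_n^{p_n}y_n^{q_n}|$ and $|\partial_y P|\lesssim|x_n^{p_n}y_n^{q_n}|\Delta y^{-1}$, both comparable to fixed values, together with $\Delta y\lesssim1$ — is exactly what the derivative estimates at the end of Section \ref{res} supply.
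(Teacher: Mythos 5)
Your proposal is correct and follows the paper's own (one-line) argument exactly: \eqref{SZ02} is Lemma \ref{size1} applied on the box $\tau\mathcal R$, and \eqref{OS02}, \eqref{OS02'} are obtained by Riesz--Thorin interpolation of the $L^2$ oscillation bound \eqref{OS01} with the $p=\infty$ and $p=1$ endpoints of \eqref{SZ02}, respectively. Your exponent bookkeeping is accurate, and your remark about verifying the hypotheses of Lemma \ref{os2} on $\tau\mathcal R$ correctly points to the derivative estimates for $\phi_\mathcal R$ and $P$ established at the end of Section \ref{res}.
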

The estimates (\ref{OS02}) are employed for the $p\geq 2$ case and (\ref{OS02'})
for the $1\leq p\leq 2$ case.
Since  the arguments for these two cases are similar, 
we only focus on the $p\geq 2$ case, i.e. $l \geq k$.  
Assume also $(k,l) \neq (1,1)$ for otherwise the phase is non-degenerate and the desired estimates were already proved in \cite{HOR73}. 
Details of the proof will appear in the next three sections. In the rest of this section, we address the sharpness of Theorem \ref{COM}.  

In what follows, we assume we have established (\ref{goal77}) for all the vertices of $\mathcal N(S)$. 
For $(\f 1 {p\alpha}, \f 1 {p'\alpha})$ lying in the Newton diagram, 
we claim that $\|T_0\|_{p}\lesssim |\lambda|^{-\f 1\alpha }$ is sharp. 
For convenience, let us use $(a,b)$ to denote $(\f 1 {p\alpha}, \f 1 {p'\alpha})$. 
Let $\mathcal L_m$ be any supporting line through $(a, b)$. Then, one can find a small number $\delta>0$ depending only on $S$ such that 
$$
|\lambda S(\delta|\lambda|^{-\f {1}{a+bm}}, \delta|\lambda|^{-\f {m}{a+bm}})|\leq 2^{-10}.
$$   
If $f$ and $g$ are characteristic functions of $[0, \delta|\lambda|^{-\f {m}{a+bm}}]$ and $[0, \delta|\lambda|^{-\f {1}{a+bm}}]$ respectively,  then 
$$
\f {|\langle T_0f, g\rangle | }
{\|f\|_p\|g\|_{p'} }\sim \delta  |\lambda|^{-\f {1}{a+b}} = \delta  |\lambda|^{-\alpha}.
$$
Consequently, all the estimates associated to points in $\mathcal D(S)$ must be sharp, 
and those associated to the interior points of the $\mathcal N(S)$ can not be sharp, 
and those associated to the points outside $\mathcal N(S)$ can not be true.  

%
%
%
%
%
%
%
%
%
%
%
%
%
%
%
%
%
%

\section{The minor case}\label{minor}
In this section, we will exploit the three basic principles: size-estimates, oscillation-estimates and almost-orthogonality 
to address the minor case. 
In conjunction with the resolution algorithm, we will deduce certain structural information for the phase of the two major cases. 
The arguments are split into two parts according to $n=0$ and $n\geq 1$.

\subsection{
\bf{Case 1: n=0}} Then $ |P(x,y)| \sim |x^{p_0} y^{q_0}|$.  
Notice that $\Delta x \sim \sigma$ and $\Delta y \sim \rho \sigma^{m_0}$. 
For convenience in later arguments, let us define the quantities 
\begin{align*}
& A_{\sigma,\rho} =  \sigma^{1/p}(\rho\sigma^{m_0})^{1/p'}  
 \\
 &B_{\sigma,\rho} = |\lambda\sigma ^{p_0}(\rho^{q_0}\sigma^{m_0)}|^{-1/p} (\rho\sigma^{m_0})^{1 - 2/p}
\end{align*}
and their convex combination 
\begin{align*}
C_{\sigma,\rho}=  A^{\frac{k-1}k}_{\sigma,\rho}B^{\frac 1 k}_{\sigma,\rho}=|\lambda|^{-\frac 1 {k+l}}  \left (\frac {\sigma ^{k-1}(\rho\sigma^{m_0})^{l-1}}{\sigma ^{p_0}(\rho\sigma^{m_0})^{q_0}}\right)^{\frac 1 {k+l}} .
 \end{align*}
 By  (\ref{SZ02}), (\ref{OS02}) and their interpolation,  
 \begin{align}\label{LE01}
 |\mathcal B_\mathcal R(f,g)| 
 &\lesssim \min\{ A_{\sigma,\rho}, B_{\sigma,\rho}, C_{\sigma,\rho} \}  \left\|f\Id_{\mathcal R_y}\right\|_p\left\|g\Id_{\mathcal R_x}\right\|_{p'}.
 \end{align}
The fact $x^{p_0}y^{q_0}$ is the dominant term in $P(x,y)$ implies $C_{\sigma,\rho}\lesssim  |\lambda|^{-\frac 1 {k+l}}$. 

The number of $\mathcal R\in U_{0,g}(\sigma,\rho)$ is bounded by a constant since $\Delta x\sim \sigma$. Consequently, we do not need to worry about summing over $\mathcal R$ in this collection. To sum over $\sigma$ and $\rho$, consider first $ (k-1, l-1)\neq (p_0,q_0)$ and let 
 \begin{align*}
 \Theta_j = \{(\sigma,\rho): \sigma ^{k-1}(\rho\sigma^{m_0})^{l-1} \sim 2^{j} \sigma ^{p_0}(\rho\sigma^{m_0})^{q_0} \}.
 \end{align*}
 Again, by the fact $x^{p_0}y^{q_0}$ is the dominant term in $P(x,y)$, $ \Theta_j  =\emptyset $  if $j$ is larger than some fixed constant $C$. 
 The fact that $(k-1,l-1)$ and $(p_0,q_0)$ are linearly independent allows us to employ the almost-orthogonality Lemma \ref{or3} 
 to sum over $(\sigma,\rho)$ in the same $\Theta_j$ without any loss.  Thus 
 \begin{align*}
|\mathcal B_{n,g}(f,g)| & \leq \sum_{j\leq C} \sum_{(\sigma,\rho)\in\Theta_j} \sum_{\mathcal R\in U_{0,g}(\sigma,\rho)} |\mathcal B_{\mathcal R}(f,g)|   
 \\
 &\leq 
  \sum_{j\leq C}  \sum_{(\sigma,\rho)\in\Theta_j}   |\lambda |^{- \frac 1 {k+l}} 2^{\frac j {k+l}} 
  \left\|f\Id_{\mathcal R_y}\right\|_p\left\|g\Id_{\mathcal R_x}\right\|_{p'} \lesssim  |\lambda |^{- \frac 1 {k+l}}  \|f\|_p \|g\|_{p'}.
 \end{align*} 
 In the case $ (p_0,q_0) =   (k-1, l-1)$ and $k \neq 1$, 
 \begin{align*}
 B_{\sigma,\rho}  = |\lambda|^{- \frac{k } {k+l}} (\sigma^{-\frac{k-1} {k+l}} \rho^{-\frac{l-1}{k+l}})^k .
 \end{align*} 
And now we set  
  \begin{align*}
 \Theta_j = \{(\sigma,\rho): B_{\sigma,\rho}  \sim 2^j |\lambda|^{-\frac 1 {k+l}}  \}.
 \end{align*}
Notice that $l\geq k \geq 2$ and there is no loss when adding all $(\sigma,\rho)$ in the same $\Theta_j$ due to Lemma \ref{or3}.  
 One can then employ the $B_{\sigma,\rho}$ estimates for $j\leq 0$ and the $A_{\sigma,\rho}$ estimates for $j >0$ in the summation of $j$.   
 It is important that $k\neq 1$ in this case, for otherwise $C_{\sigma,\rho}$ is not a true convex combination of $A_{\sigma,\rho}$ and $B_{\sigma,\rho}$, and the above arguments will result in a $\log (2+|\lambda|)$ loss.  
 This case, i.e. our first major case,  $(p_0,q_0) = (0, l-1)$ (since $k = 1$) will be addressed in Section \ref{major1} by different approachs.

 \vspace{0.2in}
 
 \subsection{\bf{Case 2: $n\geq 1$. }}
 In this case, we need only focus on one single $\sigma$.  
Indeed, notice that $|y| \sim x^{m_0}$ and if $\sigma/\sigma' >C$ for some large constant, then  
$$
\{y: (x,y)\in \cup_{\rho}U_{n,g}(\sigma, \rho)\}\cap  \{y: (x,y)\in \cup_{\rho}U_{n,g}(\sigma', \rho)\} = \empty \,.
$$ 
By the almost-orthogonality Lemma \ref{or3}, it suffices 
to establish the desired bound for each single $\sigma$, given this bound is 
independent of $\sigma$.  
 Now for $(x,y)\in\mathcal R\in U_{n,g}(\sigma,\rho)$, one has 
 \begin{align*}
 \begin{cases}
 \Delta x \sim \rho\sigma^{m_0+\dots+m_n} \sigma^{1-m_0}
\\
 \Delta y \sim \rho\sigma^{m_0+\dots+m_n} 
 \end{cases}
 \end{align*}
 and 
 \begin{align*}
 |P(x,y)| \sim \sigma^{p_n +m_nq_n }\rho^{q_n}. 
 \end{align*}
 The estimates in (\ref{LE01}) hold with 
 \begin{align*}
& A_{\sigma,\rho} =  \rho\sigma^{m_0+\dots+m_n} \sigma^{(1-m_0)/p},
 \\
 &B_{\sigma,\rho} = |\lambda\sigma ^{p_n+m_nq_n}\rho^{q_n}|^{-1/p} (\rho\sigma^{m_0+\dots+m_n} )^{1 - 2/p}
\\
&
C_{\sigma,\rho}=  A^{\frac{k-1}k}_{\sigma,\rho}B^{\frac 1 k}_{\sigma,\rho}=|\lambda|^{-\frac 1 {k+l}}  
\sigma^{\f{\mu}{k+l}} \rho^{\f \nu {k+l}}
 \end{align*}
where 
\begin{align*}
\nu &= (k-1+l-1) -q_{n}
\\
 \mu &= (k-1 +l -1)(m_0+\dots m_{n}) +(1-m_0)(k-1) -  (p_n+m_nq_n).
 \end{align*}
Applying Lemma \ref{or3} to sum $\mathcal B_\mathcal R(f,g)$ over all $\mathcal R\in U_{n,g}(\sigma,\rho)$ gives 
$$
\sum_{\mathcal R\in U_{n,g}(\sigma,\rho)} |\mathcal B_\mathcal R (f,g)| \lesssim \min\{A_{\sigma,\rho},\, B_{\sigma,\rho}, C_{\sigma, \rho}\}\|f\|_p\|g\|_{p'}.
$$
We claim that $\mu$ and $\nu$ are both non-negative. An important observation is   
\begin{align}\label{LS0}
l-1 \geq s_0,
\end{align}
which implies $\nu \geq 0$ immediately. 
Indeed (\ref{LS0}) is obvious, if the vertex $(k-1,l-1)$ is equal to or lying on the left of $(p_0,q_0)$. 
 In this case if $\nu=0$, then one must have  
 \begin{align}\label{key78}
 \begin{cases}
 k=1
 \\
 l-1=q_0 =s_0 = q_n. 
 \end{cases}
 \end{align} 
 Otherwise the vertex $(k-1, l-1)$ lies on the right of $(p_0,q_0)$. Then  
\begin{align}\label{tem01}
s_0 \leq q_0 - (l-1) \leq (q_0 - (l-1))m_0 \leq (k-1)- p_0 \leq k -1 \leq l-1.  
\end{align}
We used the facts that $m_0\geq 1$ for the second inequality, that the vertex $(k-1, l-1)$ lies on or above the supporting line passing through $(p_0,q_0)$ with slope $-1/m_0$ for the third one, and that the assumption $l \geq k$ for the last one. 
In this case $\nu $ has to be positive, for if $\nu =0$ then $k=1$ and $s_0 =k-1 =l-1$, i.e.  $k=l=1$.  
This is the non-degenerate case excluded from the beginning.   

To show $\mu \geq 0$, we employ the induction formula \eqref{IT5}:
$$
p_n+m_nq_n \leq p_0+m_0q_0 +\sum_{j=1}^{n}q_{j,l}m_j 
$$
 and 
 $$
  p_0+m_0q_0 \leq (k-1)+m_0(l-1) . 
 $$
 Then
 $$
 \mu \geq \big((k-1) +m_0(l-1)- (p_0+m_0q_0)\big) + \sum_{j=1}^n((l-1)-q_{j,l})m_j\geq 0
 $$
and a necessary condition for $\mu =0$ is $l-1 =s_0 =q_n$.   
 
If $\nu >0$, then 
$
C_{\sigma,\rho} \lesssim  |\lambda|^{-\frac 1 {k+l}}\rho ^\nu. 
$
One can sum $C_{\sigma,\rho}$ over dyadic numbers $0<\rho<1$ and obtain a bound independent of $\sigma$.

Else $\nu =0$, (\ref{key78}) implies $k-1=0$, $l-1 = s_0 =q_0=q_n $, and thus $(p_n,q_n ) = (p_{n,l}, q_{n,l})$ and $(p_0, q_0) = (0, l-1)$.  
In this good region $U_{n,g}$, the function $P_n$ behaves like 
$$
|P_n(x_n,y_n)|\sim |x_n^{p_n} y_n^{q_n}| = |(y-\gamma_n(x))^{l-1}|
$$ 
with $ |y-\gamma_n(x)| \lesssim x^{m_0+\dots+ m_{n-1}}$.  
This is our second major case that will be addressed by complex interpolation in Section \ref{major2}.

%
%
%
%
%
%
%
%
%
%
%
%
%
%
%
%
%
%

 \section{The first major case:  $n=0$ and $k =1$}  \label{major1}
In this section, we need to obtain a $|\lambda|^{-\f 1 {l+1}}$ decay rate estimate of the $L_{l+1}\to L_{l+1}$ operator norm of the operator given by 
\begin{align*}
T_{0,g}f(x) = \int e^{i\lambda S(x,y)} f(y) \phi_{0,g}(x,y)dy. 
\end{align*}  
Here $\phi_{0,g}$ is essentially supported in the good region defined by the vertex $(0,l-1)$, i.e. the region where $y^{l-1}$ is the dominant term in $|P(x,y)|$. Our strategy is as follows. First, it suffices to establish the same decay estimate for the $L^{\f {l+1} l}$ operator norm of its adjoint. 
This will be done by first proving a sharp $L^2$-estimate for some truncated non-degenerate oscillatory integral operators, and second by lifting this estimate to the desired  
$L^{\f {l+1} l}$-estimate via Lemma \ref{INT001}.

We now go to the details.  
First, if $(0,l-1)$ is the only vertex in the Newton polygon, then $y^{l-1}$ is always the dominant term and $\phi_{0,g}(x,y) =\phi(x,y)$. 
This is a simple case that can be proved by quoting H\"ormander's result \cite{HOR73} directly after applying change of variables and the lifting trick below;
 see Lemma \ref{L2} and its proof below.  
In what follows, we focus on the difficult case that $\mathcal N(P)$ has vertices other than $(0,l-1)$. 
Then the good region defined by it 
has two portions, one lies in $y\gtrsim 
|x|^{m_0}$ and the other in $y \lesssim -|x|^{m_0}$. 
We can then write $\phi_{0,g}  =\phi_{0,g}^++\phi_{0,g}^-$ and $T_{0,g} =T_{0,g}^++T_{0,g}^-$.  
The treatments for them are the same and we will only handle $T_{0,g}^+$ (i.e. $y\geq |x|^{m_0}$).  
For notation simplicity, we still use $\phi_{0,g}$ and $T_{0,g}$ to denote 
$\phi_{0,g}^+$ and $T_{0,g}^+$ respectively.

 Notice that  for $\alpha = 0,1, 2$ and for all $(x,y)\in \supp \phi_{0,g}$,
\begin{align}\label{BD90}
|\partial_x^{\alpha}\phi_{0,g}(x,y)| \lesssim |y|^{-\frac \alpha {m_0}}. 
\end{align} 
Here $-\f 1 {m_0}$ is the slope of the compact edge containing $(0,l-1)$. 
It suffices to establish
\begin{align}\label{TP37}
\|T_{0,g}^*\|_{{\frac {l+1}{l}}} \lesssim |\lambda|^{-1/(l+1)},
\end{align} 
where $T_{0,g}^*$ is the adjoint of $T_{0,g}$, i.e. 
\begin{align*}
T_{0,g}^*f(y) = \int e^{-i\lambda S(x,y)} f(x) \phi_{0,g}(x,y)dx. 
\end{align*}  
Change variable $u = y^{1/ l}$ and set 
\begin{align*}
Hf(u) = \int e^{-i\lambda R(x,u)} f(x) \psi(x,u)dx
\end{align*}  
where 
\begin{align*}
\begin{cases}
R(x,u ) = S(x, u^{1/l})
\\
\psi(x,u) = \phi_{0,g}(x,u^{1/l}). 
\end{cases}
\end{align*}
We claim that 
\begin{lemma}\label{L2}
Under the above setting, the followings hold
\begin{align}\label{T35}
\|Hf\|_\infty \lesssim  \|f\|_1
\q {\rm and} 
\q\|Hf\|_2 \lesssim |\lambda|^{-1/2}\|f\|_2.
\end{align}
\end{lemma}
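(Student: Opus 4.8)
The plan is to prove the two estimates in Lemma \ref{L2} separately, the first being trivial and the second being the real content. For the $L^1\to L^\infty$ bound, I would simply note that the kernel of $H$ is $e^{-i\lambda R(x,u)}\psi(x,u)$, which is bounded in absolute value by $|\psi(x,u)|\le \|\phi_{0,g}\|_\infty<\infty$; hence $|Hf(u)|\le \|\psi\|_\infty\|f\|_1$ and $\|Hf\|_\infty\lesssim\|f\|_1$. This uses nothing but the compact support and boundedness of the cut-off.

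For the $L^2\to L^2$ bound I would apply the operator van der Corput principle, i.e.\ the method behind Lemma \ref{os2} (which, as the paper remarks, can be re-derived here). The key point is to compute the relevant mixed derivative of the new phase. Since $u=y^{1/l}$, i.e.\ $y=u^l$, the chain rule gives $\partial_u R(x,u)=\partial_y S(x,y)\cdot l\,u^{l-1}$, and so
\begin{align*}
\partial_x\partial_u R(x,u) = l\,u^{l-1}\,\partial_x\partial_y S(x,y) = l\,u^{l-1}\,P(x,y).
\end{align*}
On $\supp\phi_{0,g}$ we have $|P(x,y)|\sim |y^{l-1}|=|u^{l-1}|^{?}$ — more precisely $|P(x,y)|\sim y^{l-1}=u^{l(l-1)}$, since $y^{l-1}$ is the dominant monomial there. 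Therefore $|\partial_x\partial_u R(x,u)|\sim u^{l-1}\cdot u^{l(l-1)}$, a pure (fractional) power of $u$, so on each dyadic strip $u\sim 2^{-\nu}$ the mixed second derivative is comparable to a constant $\mu_\nu$. On such a strip one also needs the side conditions of Lemma \ref{os2}: that $\psi$ and its $y$-derivatives (here $u$-derivatives) are controlled by the reciprocal of the strip width, and that $|\partial_x\partial_u^\alpha R|\lesssim \mu_\nu/\delta_2^\alpha$ for $\alpha=1,2$ with $\delta_2$ the strip width. These follow from \eqref{BD90} (rewritten in the $u$ variable via the chain rule) together with the bounds $|\partial_y^b P|\lesssim |x^{p_n}y_n^{q_n}|\Delta y^{-b}$-type estimates from the resolution construction, specialized to the $n=0$, $(p_0,q_0)=(0,l-1)$ situation. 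Lemma \ref{os2} then yields $\|H_\nu f\|_2\lesssim |\lambda\mu_\nu|^{-1/2}\|f\|_2$ on the $\nu$-th strip.

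Finally I would sum over $\nu$. The operators $H_\nu$ act on disjoint $u$-strips, so their ranges are orthogonal in $u$; similarly one can arrange the $x$-supports to have bounded overlap (or simply use the crude triangle inequality after the orthogonality in $u$, since the strip bounds form a geometric series). The exponents are chosen exactly so that $|\lambda\mu_\nu|^{-1/2}$ together with the competing trivial $L^2$ bound from the strip width sum to $|\lambda|^{-1/2}$; concretely, on the low-oscillation strips one uses the size estimate and on the high-oscillation strips the van der Corput estimate, the crossover happening where $\mu_\nu\sim\lambda^{-1}$, and the two geometric tails add up to $|\lambda|^{-1/2}$ with no logarithmic loss because $l\ge 2$ makes the relevant exponents strictly separated. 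I expect the main obstacle to be the bookkeeping in verifying the hypotheses \eqref{ps1} and \eqref{PS1} of Lemma \ref{os2} uniformly in $\nu$ — in particular checking that the derivative bounds on $\psi$ and on the higher $u$-derivatives of $R$ survive the substitution $y=u^l$ (which is singular at $u=0$) with constants independent of the strip — and then confirming that the strip-by-strip bounds sum geometrically rather than producing a $\log|\lambda|$; this is precisely the place where the hypothesis that $y^{l-1}$ is the \emph{dominant} term (so that $|P|\sim y^{l-1}$ with no smaller powers of $y$ competing) is essential.
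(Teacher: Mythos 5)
The $L^1\to L^\infty$ half is indeed trivial and you handle it correctly. The $L^2$ half is where your plan both inverts a key computation and, more importantly, would not close even after the inversion is repaired.

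First, the change of variables is reversed. The paper's $H$ is built from $R(x,u)=S(x,u^{1/l})$ and $\psi(x,u)=\phi_{0,g}(x,u^{1/l})$, i.e.\ $y=u^{1/l}$, equivalently $u=y^{l}$; the line ``change variable $u=y^{1/l}$'' is a slip, as ``\eqref{TP37} follows by setting $u=y^{l}$'' and the displayed weight $|u|^{1/l-1}\,du\sim dy$ confirm. You took the typo literally and set $y=u^{l}$, obtaining $R''_{xu}=l\,u^{l-1}P(x,y)\sim u^{l^{2}-1}$, a \emph{degenerate} mixed Hessian. With the correct substitution $R''_{xu}=\tfrac1l u^{1/l-1}S''_{xy}(x,u^{1/l})$, and since $|S''_{xy}|\sim y^{l-1}=u^{(l-1)/l}$ on $\supp\phi_{0,g}$, one gets $|R''_{xu}|\sim 1$. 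The entire point of substituting $u=y^l$ is to render the phase nondegenerate; in your version this purpose is lost, and in fact the dyadic crossover you propose would then yield a decay strictly \emph{worse} than $|\lambda|^{-1/2}$.

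Second, and more fundamentally, even after correcting to $|R''_{xu}|\sim1$ the strategy ``decompose into dyadic $u$-strips, apply Lemma \ref{os2} on each, sum a geometric series'' does not close. Since $|R''_{xu}|\sim1$, the oscillation estimate gives the \emph{same} bound $|\lambda|^{-1/2}$ on every one of the $\sim\log|\lambda|$ high-oscillation strips; there is no extra decay in the strip index for the tail to sum against. Nor can you invoke Lemma \ref{or3}: the $x$-projections of the strips are the nested intervals $|x|\lesssim u^{1/(lm_0)}$, so $\sum_\nu\|f\,\Id_{I_\nu}\|_2^2\sim\int|f(x)|^2\log(1/|x|)\,dx$, which is not $\lesssim\|f\|_2^2$. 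The net effect is exactly the $\log|\lambda|$ loss that Section 2 describes as the obstruction in this first major case; your remark that ``$l\ge2$ makes the relevant exponents strictly separated'' is not correct, because the relevant exponent (the power of $u$ in $R''_{xu}$) is zero. The paper avoids dyadic decomposition altogether: it passes to $HH^{*}$, proves the pointwise kernel bound $|K(u,z)|\lesssim\delta/(1+(\lambda\delta|u-z|)^{2})$ with $\delta=\min(|u|^{1/(lm_0)},|z|^{1/(lm_0)})$ (via \eqref{K1} and \eqref{DD89}), splits $K\lesssim K_1+K_2$ according to which of $\delta(u),\delta(z)$ is used, and then — this is the Phong--Stein--Sturm log-removal device — dominates $\int K_i(\cdot,z)f(z)\,dz$ by $\lambda^{-1}Mf$ with $M$ the Hardy--Littlewood maximal operator, rather than applying Schur's test. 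That replacement of Schur's test by the maximal function is the ingredient your proposal is missing, and it is precisely what eliminates the logarithm.
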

Then \eqref{TP37} follows from this lemma and Lemma \ref{INT001}. 
Indeed, notice that \eqref{T35} and $p_0 =2$ in Lemma \ref{INT001} imply 
\begin{align*}
(\int |Hf(u)|^p|u|^{p-2} du)^{1/p}  \lesssim  |\lambda|^{1/p-1}
\end{align*}
for $1<p\leq 2$. Setting $p =\frac {l+1} l$ yields
\begin{align*}
\left(\int \left |\int e^{i\lambda S(x,u^{1/l})} \phi(x,u^{1/l})f(x)dx\right|^{(l+1)/l}|u|^{1/l-1} du \right)^{l/(l+1)} \lesssim  |\lambda|^{-1/(l+1)}
\end{align*}
and (\ref{TP37}) follows by setting $u = y^l$.  

The first estimate in (\ref{T35}) is obvious due to the compact support of $\psi$ and what remains is the verification of the $L^2$-estimate. 
Notice that in the case $(0,l-1)$ is the only vertex, $\psi$ is just a normal cut-off (without the truncation). This $L^2$-estimate is just the classical result of H\"ormander. However, in this truncated version, if one follows the arguments from \cite{HOR73} line by line, 
there will be a $\log (2+|\lambda|)$ loss.  
As it was observed by Phong, Stein and Sturm \cite{PSS01}, 
a key to avoid this loss is to couple $TT^*$ arguments with the Hardy-Littlewood maximal functions, rather than 
Schur's test.  Details are given below.  

\begin{proof}[Proof of the $L^2$-estimate]
Let $H^*$ denote the adjoint operator of $H$, then
\begin{align*}
HH^*f(u) = \int K(u,z) f(z)dz 
\end{align*}
where
\begin{align}\label{MK}
K(u,z) = \int e^{-i\lambda(R(x,u) -R(x,z))}\psi(x,z)\psi(x,u)dx.
\end{align}
Set $\delta(\cdot) = |\cdot|^{\frac 1  {l \cdot m_0}}$ and $\delta  = \min\{\delta(u), \delta(z)\}$. One has  
$
|x| \lesssim \delta
$
given $\psi(x,z)\psi(x,u) \neq 0$. Consequently,   
\begin{align}\label{K1}
|K(u, z)| \lesssim \delta. 
\end{align}
Another estimate we need is 
\begin{align}\label{DD89}
|K(u, z)| \lesssim \delta ^{-1}  ( \lambda |u-z|)^{-2}. 
\end{align}
Together, they yield 
\begin{align}\label{KKK}
|K(u,z) |  \lesssim  \frac  \delta {1+ (\lambda \delta |u-z|)^2}.
\end{align}
We postpone the proof of \eqref{DD89} but focus on the more interesting part:  the deduction of the $L^2$-estimate from \eqref{KKK}.  
As it was mentioned above, the routine approach, namely the Schur test, is not efficient in estimating the kernel $K(u,z)$.  
A key is to exploit the Hardy-Littlewood maximal function. 
First, 
one can majorize the left hand side of \eqref{KKK} by the sum of 
\begin{align*}
K_1(u,z) =  \frac  {\delta(u) }{1+ (\lambda \delta(u) |u-z|)^2}
\q {\rm and}\q 
K_2(u,z) =  \frac  {\delta(z)} {1+ (\lambda \delta(z) |u-z|)^2}.
\end{align*}
If $M$ is the Hardy-Littlewood maximal operator, then 
\begin{align*}
\left |\int K_1(u,z) f(z) dz \right | \lesssim \lambda^{-1} Mf(u)
\q {\rm and }\q 
\left|\int K_2(u,z) g(u) du \right| \lesssim \lambda^{-1} Mg(z).
\end{align*}
As a consequence,  
\begin{align*}
\iint |K(u,z) f(z)g(u) |du dz  \lesssim \lambda^{-1} \left(\int Mf(u) |g(u)| du +\int |f(z)| Mg(z)dz\right).
\end{align*}
By the Cauchy-Schwarz inequality and the Hardy-Littlewood maximal functions, the above is
 bounded (up to multiple constant) by $\lambda ^{-1} \|f\|_2\|g\|_2$. It is obvious this estimate yields the $L^2$-estimate  
of (\ref{T35}).

What remains is the routine homework to verify \eqref{DD89}.  
Set 
\begin{align*}
\begin{cases}
A(x) = \left(\partial_x (R(x,u) -R(x,z))\right)^{-1}
\\
B(x) = \psi(x,z)\psi(x,u).
\end{cases}
\end{align*}
By applying integration by parts twice, (\ref{MK}) becomes
\begin{align}
K(u,z) = \int \left(\frac{1}{-i\lambda} \right)^2 e^{-i\lambda(R(x,u) -R(x,z))} \Big((A(x)B(x))'A(x)\Big )' dx, 
\end{align}
 that is 
\begin{align}\label{OP90}
|K(u,z)| \lesssim \lambda^{-2} \delta  \left\|\Big((AB)' A\Big)'\right\|_\infty.
\end{align}
Thus we need to find upper bounds for $\|A^{(\alpha)}\|_\infty$ and $\|B^{(\alpha)}\|_{\infty}$ for $\alpha =0, 1, 2$.  
In what follows, $\alpha$ is equal to $0$, $1$ or $2$.  
The estimates (\ref{BD90}) imply 
\begin{align*}
\|B^{(\alpha)}\|_\infty \lesssim \delta ^{-\alpha}. 
\end{align*}
By the chain rule, 
$ R''_{xu}(x,u) =   S''_{xy}(x,u^{1/l})(u^{1/l})' $. Notice also $|S''_{xy}(x,y)| \sim y^{l-1}$. Thus 
 \begin{align}
 | R''_{xu}(x,u)| \sim 1. \label{R7}
 \end{align}
 We also have 
\begin{align}\label{R8}
|\partial_x^{\alpha} R''_{xu}(x,u) | \lesssim |u|^{-\frac \alpha {l\dot m_0}}.
\end{align}
Indeed, consider the Taylor expansion of $S''_{xy}$
\begin{align*}
S''_{xy} (x,y) = \sum c_{p,q} x^py^q.
\end{align*}
Since 
$(0,l-1)$ lies in the compact edge (a supporting line) of slope $-1/m_0$, for each $(p,q)$ with $c_{p,q}\neq 0$, we have   
\begin{align}
 \frac{p}{m_0}+q  \geq (l-1)\label{VE23}
\end{align}
and equality holds if and only if $(p,q)$ is in this edge. 
Then
\begin{align*}
 R''_{xu}(x,u) = \sum{c_{p,q}'}x^p u^{q/l +(1/l-1)}  
\end{align*}
and  
\begin{align*}
\partial_{x}^{\alpha} R''_{xu}(x,u) = \sum{c_{p,q}''}x^{p-\alpha} u^{q/l +(1/l-1)}
\end{align*}
for some constants $c_{p,q}' $ and $c_{p,q}''$.  
By (\ref{VE23}), in the region $|x|\lesssim (u^{\frac 1 l})^{\frac1{m_0}}(=y^{\f1 {m_0}})$, 
\begin{align*}
|\partial_{x}^\alpha R''_{xu}(x,u) |\lesssim  \sum{c_{p,q}''}|(u^{\frac{1}{l m_0}})^{p-\alpha} u^{q/l +(1/l-1)} | \lesssim |u|^{-\frac{\alpha}{l m_0}} ,
\end{align*}
 which is (\ref{R8}).

 Employing the mean value theorem gives  
\begin{align*}
\|A^{(\alpha)}\|_\infty \lesssim \delta^{-\alpha} |u-z|^{-1}.
\end{align*}
Then (\ref{DD89}) follows from (\ref{OP90}) , the product rule and above estimates for $\|A^{(\alpha)}\|_\infty$ and $\|B^{(\alpha)}\|_\infty$.

\end{proof}

%
%
%
%
%
%
%
%
%
%
%
%
%
%
%
%
%
%

 \section{The second major case: $n\geq 1$ and $\nu = 0$} \label{major2}
We need to handle the case $\nu =0$ and $(p_n,q_n) =(p_{n,l}, q_{n,l})$, which implies  
\begin{align*}
\begin{cases}
k =  1
\\
l-1 = s_0 = \dots = s_{n-1}=q_{n,l}. 
\end{cases}
\end{align*}
In this good region $U_{n,g}$, one has 
\begin{align*}
\label{fac09}
|P(x,y)|\sim |x_n^{p_{n,l}}y_n^{q_{n,l}}| \sim |(y -\gamma(x))^{l-1}|.
\end{align*}
Here 
\begin{align*}
\gamma (x) = r_0x^{m_0} +\dots+ r_{n-1} x^{m_0+\dots+m_{n-1}}+\xi(x)
\end{align*}
where $\xi(x)$ is the sum of the remaining terms (if any) of higher degree in $x$,
and $U_{n,g}$ is contained in   
\begin{align}
|y -\gamma(x)| \lesssim  |x^{m_0+\dots+m_{n-1}}| \lesssim  |x^{m_0}| \,.
\end{align}
We only dyadically decompose $U_{n,g}$ along the $x$ variable but not the $y$ variable. 
Set 
\begin{align}\label{up90}
\phi_{n,g,\sigma}(x,y) = \sum_{\rho}\sum_{\mathcal R\in U_{n,g}(\sigma,\rho)}\phi_{\mathcal R}(x,y) 
\end{align}
and 
\begin{align}
T_{n,g,\sigma} f(x) = \int e^{i\lambda S(x,y)} f(y) \phi_{n,g,\sigma}(x,y) dy. 
\end{align}
By the same almost-orthogonality arguments from Section \ref{minor}, it suffices to handle a fixed $\sigma$.
We need to show there is a constant $C$ uniform for all small $\sigma$ such that 
\begin{align}\label{TP1}
\|T_{n,g,\sigma} f \|_{l+1} \leq C |\lambda|^{-\frac 1 {1+l}} \|f\|_{l+1}. 
\end{align}
We now let $\sigma$ be a fixed small number through out this section.   
Given the form of $P(x,y)$ in \eqref{fac09},  it is natural to make the change of variable   
 $u = \gamma (x)$ 
and set
\begin{align}
\begin{cases}
\psi_\sigma(u,y) = \phi_{n,g,\sigma}( \gamma^{-1}(u), y)
\\
R(u,y)  =\sigma^{m_0-1} S(\gamma^{-1}(u), y). 
\end{cases}
\end{align}
Define a new operator 
\begin{align}
\mathcal E_\sigma f(u) = \int e^{i\lambda R(u,y)} f(y) \psi_\sigma(u,y) dy. 
\end{align}
The following lemma implies (\ref{TP1}). 
\begin{lemma}\label{L7}
Under the above notation, we have 
\begin{align}\label{TP2}
\|\mathcal E_\sigma f \|_{l+1} \lesssim |\lambda|^{-\frac 1 {1+l}} \|f\|_{l+1}
\end{align}
with the implicit constant uniform for all small $\sigma$. 
\end{lemma}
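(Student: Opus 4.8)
The plan is to prove Lemma~\ref{L7} by embedding $\mathcal E_\sigma$ into an analytic family of damped oscillatory integral operators and running Stein interpolation between a damped $L^2$ estimate (following Phong-Stein \cite{PS97-2}) and a variant $H^1_E\to L^1$ estimate (following Phong-Stein \cite{PS86} and Pan \cite{PAN91,PAN95}), keeping all bounds uniform in $\sigma$. The first task is to record the geometry after the substitution $u=\gamma(x)$. Since $R''_{uy}=\sigma^{m_0-1}\,S''_{xy}(\gamma^{-1}(u),y)\,(\gamma^{-1})'(u)$, $(\gamma^{-1})'(u)\sim\sigma^{1-m_0}$, and $|S''_{xy}(\gamma^{-1}(u),y)|=|P(\gamma^{-1}(u),y)|\sim|u-y|^{l-1}$ by the structure of $P$ in this good region, the powers of $\sigma$ cancel and
\[
|R''_{uy}(u,y)|\sim|u-y|^{l-1}
\]
with constants independent of $\sigma$ (the factor $\sigma^{m_0-1}$ in $R$ was inserted precisely for this; equivalently $p_n=(l-1)(m_0+\dots+m_{n-1})$). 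The transported forms of \eqref{key2} and \eqref{new} bound the higher $u$- and $y$-derivatives of $R$ by the corresponding powers of $|u-y|$, and $\psi_\sigma$ is a normalized cutoff supported where $|u-y|$ is comparable to a fixed positive power of $\sigma$; thus $\mathcal E_\sigma$ is, up to this $\sigma$-dependent truncation, an operator of the model type $S''_{xy}(x,y)=(y-x)^{l-1}$.

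Next I would introduce, for $-1\le\mathrm{Re}\,z\le\tfrac{l-1}{2}$,
\[
\mathcal E_\sigma^{z}f(u)=e^{z^{2}}\int e^{i\lambda R(u,y)}\,|u-y|^{z}\,f(y)\,\psi_\sigma(u,y)\,dy ,
\]
so that $\mathcal E_\sigma^{0}=\mathcal E_\sigma$. On the line $\mathrm{Re}\,z=\tfrac{l-1}{2}$ the damping satisfies $|u-y|^{(l-1)/2}\sim|R''_{uy}|^{1/2}$, so the bound $\|\mathcal E_\sigma^{z}f\|_2\lesssim|\lambda|^{-1/2}\|f\|_2$ (uniform in $\sigma$, with admissible growth in $\mathrm{Im}\,z$) is a special case of Phong-Stein's $L^2$ estimate for damped oscillatory integral operators with analytic phase \cite{PS97-2}, which I would quote. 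To make this uniform in $\sigma$ one first rescales $(u,y)$ so that $R$ becomes a fixed, or compactly varying, analytic phase, which is the ``technical treatment'' alluded to in Section~\ref{SOP}, and is where the algebraic identity $|R''_{uy}|\sim|u-y|^{l-1}$ enters, the leading part of $R$ being a constant multiple of $(u-y)^{l+1}$.

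On the line $\mathrm{Re}\,z=-1$ the kernel $e^{i\lambda R(u,y)}|u-y|^{-1+i\,\mathrm{Im}\,z}\psi_\sigma(u,y)$ is of Calder\'on-Zygmund type, so $\mathcal E_\sigma^{-1+it}$, and equally its adjoint, is an oscillatory singular integral operator; here I would prove $\|(\mathcal E_\sigma^{-1+it})^{*}\|_{H^1_E\to L^1}\lesssim(1+|t|)^{N}$ uniformly in $\lambda$ and $\sigma$, where $H^1_E$ is the oscillation-adapted variant of $H^1$, following \cite{PS86} and \cite{PAN91,PAN95}. Pan's polynomial-phase theorems do not apply verbatim: the $\sigma$-truncation of $\psi_\sigma$ forces one to first truncate the Taylor series of $R$ to a polynomial (the remaining tail being negligible on the small support) and then rerun the atomic argument for kernels supported in the relevant annulus.

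Finally, Stein interpolation applied to the family $(\mathcal E_\sigma^{z})^{*}$ between these two endpoints with $\theta=\tfrac{2}{l+1}$, so that $(1-\theta)(-1)+\theta\tfrac{l-1}{2}=0$, lands on $L^{(l+1)/l}\to L^{(l+1)/l}$ with decay interpolated from $|\lambda|^{0}$ and $|\lambda|^{-1/2}$, i.e.\ $|\lambda|^{-\theta/2}=|\lambda|^{-1/(l+1)}$; dualizing gives $\|\mathcal E_\sigma f\|_{l+1}\lesssim|\lambda|^{-1/(l+1)}\|f\|_{l+1}$ with constant independent of $\sigma$, which is \eqref{TP2}. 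The step I expect to be the main obstacle is exactly this uniformity in $\sigma$ at the two endpoints: Phong-Stein's constant for analytic phases and Pan's polynomial-phase hypotheses must both be compressed into one $\sigma$-independent constant, which is what necessitates the rescaling/compactness reduction at the $L^2$ endpoint and the careful reworking of the oscillatory singular integral estimate for the truncated kernels; a secondary technical point is checking that $H^1_E$ interpolates with $L^2$ onto the expected Lebesgue space.
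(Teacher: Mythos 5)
Your outline is the same as the paper's: embed $\mathcal E_\sigma$ into an analytic family, establish a damped $L^2$ estimate at $\mathrm{Re}\,z=\tfrac{l-1}{2}$ and an $H^1_E\to L^1$ estimate at $\mathrm{Re}\,z=-1$, and interpolate with $\theta=\tfrac{2}{l+1}$. The endpoint choices, the computation $|R''_{uy}|\sim|u-y|^{l-1}$ with the $\sigma^{m_0-1}$ normalization, and the use of the oscillation-adapted space $H^1_E$ all match. However, there is one technical device you omit that the paper leans on heavily: the damping factor is not $|u-y|^{z}$ but $\bigl(|u-y|+|\lambda|^{-1/(l+1)}\bigr)^{z}$. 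This $\lambda$-dependent regularization does real work at both endpoints. At $\mathrm{Re}\,z=\tfrac{l-1}{2}$ it splits the estimate into a near-diagonal piece $|u-y|\lesssim|\lambda|^{-1/(l+1)}$ handled trivially by the size estimate (Lemma~\ref{size1}) and a far piece where the damping genuinely matches $|R''_{uy}|^{1/2}$; with your unregularized weight, the damping vanishes at the diagonal and you must invoke the full Phong--Stein damped theorem on the whole (degenerate) region, which is where uniformity in $\sigma$ becomes delicate. More importantly, at $\mathrm{Re}\,z=-1$ the paper's kernel is \emph{bounded} (by $|\lambda|^{1/(l+1)}$), which makes the $L^2$ bound $\|U^*_{-1}\|_2\lesssim1$ elementary; that $L^2$ bound is then used to dispose of the near-atom region $|y-A|\lesssim\delta$ before the atomic/mean-value argument kicks in. With your genuinely singular $|u-y|^{-1}$ kernel, that near-diagonal $L^2$ step becomes a non-trivial oscillatory singular integral estimate in its own right.

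Two smaller points. First, the paper does \emph{not} truncate the Taylor series of $R$ to a polynomial before running the atomic argument; it works directly with the analytic phase and uses mean-value estimates on $R''_{uy}$ together with the transported derivative bounds on $\psi_\sigma$ (the chain-rule computation showing $|\partial_u\psi_\sigma|\lesssim|y-A|^{-1}$). So your proposed ``truncate to a polynomial, then apply Pan'' detour, while plausible, is not needed and would introduce its own uniformity headaches. Second, your remark that the leading part of $R$ is essentially a multiple of $(u-y)^{l+1}$ after rescaling overstates the structure: what is controlled is $R''_{uy}\sim(u-y)^{l-1}$ together with upper bounds on higher derivatives, not that $R$ itself is this model; the paper's argument never reduces to the exact monomial. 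If you incorporate the $|\lambda|^{-1/(l+1)}$-regularization into the damping factor and drop the polynomial-truncation step, the rest of your plan matches the paper's proof.
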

Indeed, applying this lemma yield   
\begin{align*}
&\int \left|\int e^{i\lambda S(x,y)} f(y) \phi_\sigma(x,y) dy\right |^{l+1} dx
\\
=&  \int \left|\int e^{i\lambda S(\gamma^{-1}(u),y)} f(y) \phi_\sigma(\gamma^{-1}(u) ,y) dy\right |^{l+1} d\gamma^{-1}(u)
\\
\sim &   \sigma^{m_0-1} \int \left|\int e^{i\lambda  \sigma^{m_0 -1}  R(u,y)} f(y) \psi_\sigma(u ,y) dy\right |^{l+1} du
\\
\lesssim &    \sigma^{m_0-1}    (\lambda  \sigma^{m_0 -1} )^{-1} \|f\|^{l+1}_{l+1},
\end{align*}
as desired.  

The rest of this section is denoted to the proof of Lemma \ref{L7}, which is accomplished via complex interpolation in the following two subsections. 
The first one consists of the basic setups, leaving the technical details to the second one.

\subsection{The setups}
Embed $\mathcal E_\sigma$ into the following family of operators   
\begin{align*}
U_z f(u) = \int e^{i\lambda R(u,y)} f(y) K_z(u,y) dy
\end{align*}
where $z$ is a complex number and where 
\begin{align*}
K_z(u,y) = \psi_\sigma(u,y)  (|u-y| +|\lambda|^{-\frac {1}{l+1}})^{z}.
\end{align*}
Its adjoint is given by 
\begin{align*}
U^*_z f(y) = \int e^{-i\lambda R(u,y)} f(u) K_{\bar z}(u,y) du.
\end{align*}
Notice that when $z=0$, $U_0 =\mathcal E_\sigma$. Next we define a variant of the $H^1$ space; see \cite{PS86} and \cite{PAN91}.
\begin{definition}
Let $I$ be an internal with center $C_I$. An atom is a function $a(y)$ which is supported on $I$, so that
\begin{align*}
|a(y)| \leq \frac 1 {|I|}
\q{\rm and }\q
\int_I e^{-i\lambda R(u,C_I) }a(u) du= 0. 
\end{align*}
\end{definition}
The space $H_E^1$ consists of the subspace of $L^1$ of functions $f$ which can be written as $f = \sum_j{\mu_j} a_j$, 
where $a_j$ are atoms, and $\mu_j\in\mathbb C$, with $\sum|\mu_j| <\infty$. Consequently, we define ${\rm BMO}_E$ as
the dual space of $H_E^1$.  

Lemma \ref{L7} is then a consequence of the following lemma and complex interpolation; see \cites{PS86, FS72}. 

\begin{lemma}\label{CP01}
Under the above setting, one has
\begin{enumerate}
\item  When ${\rm Re}(z) =-1$,  $U_z$ maps from $L^\infty$ to ${\rm BMO}_E$ with operator norm $O(1+|z|^2)$ and its adjoint $U_z^*$
maps from $H^1_E$ to $L^1$ with the same norm.  
\item  When ${\rm Re}(z) = \frac {l-1} 2$,  $U_z$ maps from $L^2$ to itself with operator norm $O(|\lambda|^{-1/2}|z|^2)$. 
\end{enumerate} 
\end{lemma}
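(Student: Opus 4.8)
The plan is to prove Lemma \ref{CP01} by establishing the two endpoint estimates separately, then noting that Lemma \ref{L7} follows by complex interpolation (Stein's interpolation theorem for analytic families of operators applied between $(L^\infty,\mathrm{BMO}_E)$ and $(L^2,L^2)$, equivalently between $H^1_E\to L^1$ and $L^2\to L^2$ for the adjoints; see \cites{PS86, FS72}). The key point is that at $\mathrm{Re}(z)=(l-1)/2$ the weight $(|u-y|+|\lambda|^{-1/(l+1)})^{z}$ has modulus $\sim(|u-y|+|\lambda|^{-1/(l+1)})^{(l-1)/2}$, which \emph{damps} the oscillatory operator enough to reach into the $L^2$ theory of Phong--Stein \cite{PS97-2}; while at $\mathrm{Re}(z)=-1$ the weight is $\sim(|u-y|+|\lambda|^{-1/(l+1)})^{-1}$, a Calder\'on--Zygmund-type singularity that is exactly what the $H^1_E$ machinery of Phong--Stein \cite{PS86} and Pan \cite{PAN91, PAN95} is designed to handle. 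Interpolating with $\theta$ chosen so that $(1-\theta)(-1)+\theta\frac{l-1}{2}=0$, i.e. $\theta=\frac{2}{l+1}$, lands the exponent at $p=l+1$ on the $L^p$ scale and produces the decay $(|\lambda|^{-1/2})^{\theta}=|\lambda|^{-1/(l+1)}$, which is precisely \eqref{TP2}.

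For part (2), the $L^2$ estimate at $\mathrm{Re}(z)=(l-1)/2$: I would recall that in $U_{n,g}$ we have $|S''_{xy}(x,y)|\sim|(y-\gamma(x))^{l-1}|$, so after the change of variables $u=\gamma(x)$ the phase $R(u,y)$ satisfies $|R''_{uy}(u,y)|\sim|u-y|^{l-1}$ (using $m_0-1$ scaling factor and the chain rule, together with the fact that $\gamma'(x)\sim x^{m_0-1}\sim\sigma^{m_0-1}$ is essentially constant on the fixed-$\sigma$ piece). The kernel $K_z$ on this line has $|K_z(u,y)|\sim(|u-y|+|\lambda|^{-1/(l+1)})^{(l-1)/2}$, which is comparable to $|R''_{uy}|^{1/2}$ away from the diagonal and bounded by $|\lambda|^{-(l-1)/(2(l+1))}$ near it. This is exactly the structure of a damped oscillatory integral operator: one writes $U_z=U_z'+U_z''$ where $U_z'$ is supported on $|u-y|\gtrsim|\lambda|^{-1/(l+1)}$ and carries the damping factor $|R''_{uy}|^{1/2}$, handled by the sharp damped $L^2$ estimate of Phong--Stein \cite{PS97-2} (quoted directly, noting their hypotheses are met here since the phase is analytic and $R''_{uy}$ is essentially a monomial in $u-y$), yielding the gain $|\lambda|^{-1/2}$; and $U_z''$ is supported near the diagonal where one uses the trivial size bound plus the small support to absorb things, again producing $|\lambda|^{-1/2}$. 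The polynomial factor $|z|^2$ in the operator norm comes from differentiating $K_z$ (the weight, after integration by parts, produces factors of $z$ and $z-1$).

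For part (1), the $L^\infty\to\mathrm{BMO}_E$ estimate at $\mathrm{Re}(z)=-1$: equivalently I establish $U_z^*:H^1_E\to L^1$ with norm $O(1+|z|^2)$, and it suffices to bound $\|U_z^* a\|_1\lesssim 1+|z|^2$ uniformly over atoms $a$ supported on an interval $I$. The standard dichotomy: on a fixed multiple of $I$ one uses $\|U_z^* a\|_{L^1(CI)}\lesssim|I|^{1/2}\|U_z^* a\|_{L^2}\lesssim|I|^{1/2}\cdot|\lambda|^{-1/2}\cdot|I|^{-1/2}\cdot\text{(size of weight)}$ — here one needs a crude $L^2$ bound for $U_z$ valid at $\mathrm{Re}(z)=-1$, obtained from the van der Corput / Phong--Stein operator estimate since $|R''_{uy}|$ is bounded below off-diagonal — giving something $O(1)$ after checking the weight contributes acceptably; away from $CI$ one exploits the cancellation condition $\int_I e^{-i\lambda R(u,C_I)}a(u)\,du=0$ in the atom, subtracting the value of $e^{-i\lambda R(u,y)}K_{\bar z}(u,y)$ at the center $C_I$, and estimating the difference by the mean value theorem. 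This is where the factor $|z|^2$ is generated: the $y$-derivative of $K_{\bar z}(u,y)=\psi_\sigma(u,y)(|u-y|+|\lambda|^{-1/(l+1)})^{\bar z}$ produces $\bar z(|u-y|+\cdots)^{\bar z-1}$, so the difference bound carries a factor $\sim|z|$, and after summing over a dyadic decomposition of $\{|y-C_I|\sim 2^j|I|\}$ one may pick up one more power, hence $O(1+|z|^2)$. The main obstacle — and the reason this is delicate — is the extra truncation $\psi_\sigma$ coming from the resolution algorithm: Pan's theorem is stated for polynomial phases without a cutoff varying at the scale $\sigma$, so I must reprove the $H^1_E\to L^1$ bound by hand, carefully tracking that all the implicit constants are uniform in $\sigma$ (and in $\lambda$). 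Verifying this $\sigma$-uniformity, together with checking that the $H^1_E$ atoms adapted to the \emph{non-standard} phase-dependent cancellation interact correctly with the derivative bounds on $\psi_\sigma$ and on $R$ inherited from \eqref{DEk}--\eqref{new}, is the technical heart of the argument and will occupy the bulk of the second subsection.
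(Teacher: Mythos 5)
Your part (2) matches the paper's approach in essence: the weight at $\mathrm{Re}(z)=\frac{l-1}{2}$ plays the role of the damping factor $|R''_{uy}|^{1/2}$ off the diagonal, a trivial size estimate handles the diagonal strip $|u-y|\lesssim|\lambda|^{-1/(l+1)}$, and one invokes Phong--Stein's damped $L^2$ theory for the rest; the growth in $|z|$ comes from differentiating the weight in integration-by-parts steps. Part (1), however, contains a genuine gap.

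The problem is your treatment of the far region $|y-C_I|\gg|I|$. You rely entirely on the atom's cancellation plus the mean value theorem, in standard Calder\'on--Zygmund fashion. Write $\delta=|I|$ and $A=C_I$ and track the two sources of error when you subtract $e^{-i\lambda R(C_I,\cdot)}K_{\bar z}(C_I,y)$ from $e^{-i\lambda R(u,\cdot)}K_{\bar z}(u,y)$. The error from the kernel $K_{\bar z}$ alone decays like $|z|\,\delta\,|y-A|^{-2}$, which integrates to $O(|z|)$ as you say. But the error coming from the \emph{phase} does not decay: the quantity $R(u,y)-R(u,A)-(R(A,y)-R(A,A))$ is of size $|u-A|\,|y-A|\,|R''_{uy}|\sim\delta\,|y-A|\cdot|y-A|^{l-1}=\delta|y-A|^{l}$, so after multiplying by $\lambda$ and by $|K_{-1}(u,y)|\sim|y-A|^{-1}$ and integrating against $\|a\|_1\lesssim 1$, the $j$-th dyadic annulus $|y-A|\sim 2^j\delta$ contributes $\sim\lambda\delta\,(2^j\delta)^{l}$. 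This grows geometrically in $j$, so the sum over all annuli in the far region diverges; it is not $O(1+|z|^2)$. The paper handles precisely this: it introduces the threshold $r=(\lambda\delta)^{-1/l}$ (chosen so that $\lambda\delta\,r^{l}=1$) and uses the cancellation argument only for $100\delta<|y-A|\leq r$, where the phase-error term telescopes to $O(1)$. For $|y-A|\geq r$ the oscillation is too strong for a Taylor expansion of the exponential to be useful, and one instead decomposes dyadically $|y-A|\sim\tau$, applies the operator van der Corput estimate (Lemma~\ref{os2}) using $|R''_{uy}|\sim\tau^{l-1}$ to get an $L^2$ bound $\lesssim(\lambda\delta\tau^{l+1})^{-1/2}$ on each annulus, and sums via Cauchy--Schwarz; the geometric series then converges \emph{because} $\tau\geq r$. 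Without this high-oscillation regime you cannot close the far-region estimate.

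A secondary, smaller issue: your near-region accounting is muddled. You invoke an $L^2$ bound with a $|\lambda|^{-1/2}$ gain, but the weight $(|u-y|+|\lambda|^{-1/(l+1)})^{-1}$ at $\mathrm{Re}(z)=-1$ is exactly designed to \emph{eat} that gain; what one actually proves (and all one needs) is the $\lambda$-uniform bound $\|U^*_{-1}\|_{L^2\to L^2}\lesssim 1$, which together with Cauchy--Schwarz and $\|a\|_2\lesssim|I|^{-1/2}$ gives the $O(1)$ contribution on $CI$ directly.
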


By duality, 
the two statements in the first part are equivalent and it suffices to establish the $H^1_E$ to $L^1$ boundedness of $U_z^*$. 
In the proof, we will also assume ${\rm Im}(z) =0$ and when ${\rm Im} (z)\neq 0$ there will be an extra $O(1+|{\rm Im} (z)|^2)$ factor.  
The proof will appear in the next subsection.  

The second part of this lemma is essentially the damped oscillatory integral operators studied by Phong-Stein \cites{PS94-2, PS97-2}.  
In the region $|y-u|\lesssim |\lambda|^{-\f 1 {l+1}}$, 
the desired estimate can be obtained by 
passing the absolute value into the integral of $U_zf$ 
and applying the size estimate Lemma \ref{size1}.
 In the region $|y-u|\gtrsim |\lambda|^{-\f 1 {l+1}}$, $K_{\frac{l-1}{2}}(u,y)$ is essentially the damping factor $|R''_{uy}(u,y)|^{\f 1 2}$
 since   $
|R''_{uy}(u,y)| \sim  |u-y|^{l-1}. 
$
We refer the readers to \cite{PS94-2, PS97-2} for its estimate.

\subsection{Proof of the $H^1_E\to L^1$ estimate}

Let $b$ be an atom, i.e. a function supported in some interval $[A -\delta, A+\delta]$ with
\begin{align*}
&|b(u)| \leq \frac 1 {2\delta}\quad \mbox{and} \quad
 \int e^{-i\lambda R(u,A)} b(u) du =0. 
\end{align*}
To prove the first part of Lemma \ref{CP01}, it suffices to show that there is a constant $C$ independent of $b$ such that 
\begin{align*}
\int |U^*_{-1} b (y) |dy  \leq C.
\end{align*}

Following same arguments in Section \ref{minor} it is not difficult to show $\|U^*_{-1}\|_{2} \lesssim 1$.
Coupled with the Cauchy-Schwarz inequality, this estimate yields  
\begin{align*}
\int_{|y-A| \leq 100 \delta } |U^*_{-1} b (y) |dy   \lesssim \delta^{1/2} \|b\|_2  \lesssim 1. 
\end{align*}
Hence, in what follows, we focus on $|y-A| >100 \delta \geq 100 |u-A|$, which also implies $|y-A | \sim |y -u|$ for $u \in \supp b$. 
Let $\tau >100 \delta $ be any dyadic number and let $\chi_{A,\tau} $ denote a non-negative smooth function supported in
$|y-A| \sim \tau$.
Define  
$r = (\lambda \delta )^{- 1 /l }$ 
and split the discussion into two regions as follow:  
\begin{enumerate}
\item[] \textbf{Region 1:} $ \tau\geq r$ 
\item[] \textbf{Region 2:} $100 \delta  <\tau \leq r$ 
\end{enumerate} 

Region 1 is handled via $L^2$ theory by exploiting the high-oscillation feature of the phase. 
For each such $\tau$, in the support of $\chi_{A,\tau}$ one has  
\begin{align*}
| R''_{uy}(u,y)| \sim  |u-y|^{l-1}\sim |y-A|^{l-1}\sim \tau ^{l-1}.
\end{align*}
Consequently, by applying Lemma \ref{os2},  the operator given by 
\begin{align*}
\chi_{A,\tau}( y) U^*_{-1} b(y) =\chi_{A,\tau}( y)  \int e^{-i\lambda R(u,y)} b(u) \psi_\sigma(u,y)  (|y-u| +|\lambda|^{-1/(l+1)})^{-1} du
\end{align*} 
satisfying 
\begin{align*}
\left(\int |\chi_{A,\tau}( y) U^*_{-1} b(y)|^2 dy\right)^{1/2} \leq (\lambda |\tau|^{l-1})^{-1/2} \tau^{-1} \|b\|_2 \sim (\lambda \delta \tau^{l+1})^{-1/2}.
\end{align*}
Thus, Cauchy-Schwarz' inequality implies 
\begin{align*}
\sum_{\tau \geq r }\int | \chi_{A,\tau}( y) U^*_{-1} b(y)  |dy \lesssim   \sum_{\tau \geq r}  \tau^{1/2}  \| \chi_{A,\tau}(\cdot)U^*_{-1} b(\cdot)\|_2 \lesssim  1.  
\end{align*}

To handle Region 2, we need to exploit the ``mean zero" property of $b$ as well   
as the low-oscillation feature of the phase.  
We divide $U_{-1}b(y) $ into the sum of 
\begin{align*}
I_1 = \int (e^{-i\lambda R(u,y)} - e^{-i\lambda (R(u,A)+R(A, y)  -R(A,A))  } )b(u) K_{-1}(u,y) du
\end{align*}
and 
\begin{align*}
I_2 = \int  e^{-i\lambda (R(u,A)+R(A, y)  -R(A,A))  } b(u) K_{-1}(u,y) du.
\end{align*}
To address the first part, we apply the mean value theorem and obtain 
\begin{align*}
 | R(u,y) - (R(u,A)+R(A, y)  -R(A,A)) |
=|u-A| |y-A| \partial_u\partial_y R(u_0,y_0) | \,,
\end{align*}
for some $u_0$ between $u$ and $A$ and some $y_0$ between $A$ and $y$.  
This is bounded above by  $ \delta |y-A|^l$ since 
$|y_0 -u_0| \lesssim |y-A| $
and 
\begin{align*}
|R(u_0,y_0) | \sim |y_0 -u_0|^{l-1} \lesssim |y-A|^{l-1}.
\end{align*}
 Passing the absolute value into the integral, applying 
Taylor's expansion to the phase and 
utilizing the facts $|K_{-1}(u,y)| \lesssim |y-A|^{-1}$ and $  \|b\|_1 \lesssim 1$,
we obtain 
\begin{align*}
|I_1| \lesssim   \int \lambda \delta |y-A|^l |b(u)|  |K_{-1}(u,y) | du \leq  \lambda \delta |y-A|^{l-1}.
\end{align*}
Consequently,  
\begin{align*}
\int_{|y-A|<r} |I_1| dy  \lesssim 1. 
\end{align*}
To control $I_2$, notice that 
\begin{align*}
|I_2| = \left|\int  e^{-i\lambda R(u,A) } b(u) K_{-1}(u,y) du\right| 
\end{align*}
which is majorized by the sum of  
\begin{align*}
I_3  =\left |\int  e^{-i\lambda R(u,A) } b(u) (K_{-1}(u,y) -K_{-1}(A,y)) du\right| 
\end{align*}
and 
\begin{align*}
I_4  =\left |K_{-1}(A,y)\int  e^{-i\lambda R(u,A) } b(u)  du\right|.
\end{align*}
Notice that $I_4 = 0$, since $b$ is an atom. 
We also claim that $I_3$ is bounded by $\delta |y-A|^{-2}$, which gives the desired estimate since 
\begin{align*}
\int_{|y-A| \geq 100 \delta } I_3 dy  \lesssim \int_{|y-A| \geq 100 \delta }\delta |y-A|^{-2} dy  \lesssim 1. 
\end{align*}
It remains to verify this bound for $I_3$.  
Indeed, 
by the mean value theorem again,  
\begin{align*}
I_3 \lesssim  |u-A|\,\, |\partial_ u K_{-1} (u_0, y)| \,\, \|b\|_1 \sim \delta \, |\partial_ u K_{-1} (u_0, y)|
\end{align*}
for some $u_0$ between $u$ and $A$. The product rule implies that $|\partial_ u K_{-1} (u_0, y)|$ 
is controlled by 
\begin{align*}
 (|y-u_0| +\lambda^{-1/(l+1)})^{-2} |\psi_\sigma( u_0,y)|+(|y-u_0| +\lambda^{-1/(l+1)})^{-1} \cdot 
|\partial_u\psi_\sigma( u_0,y)|\, .
\end{align*}
The fact $|y-u_0| \sim |y-A|$ implies $(|y-u_0| +\lambda^{-1/(l+1)})^{-1} \lesssim |y-A|^{-1}$.
The first term is majorized by 
$|y-A|^{-2}$ since $|\psi_\sigma| \lesssim 1$.
To prove the same estimate for the second term, 
we only need to verify $|\partial_u\psi_\sigma( u_0,y)|  \lesssim   |y-A| ^{-1}$.   

First, notice that if $\psi_\sigma( u_0,y) \neq 0$, then there is a $x_0$ such that $u_0 =\gamma(x_0)$ and 
$(x_0,y) \in \supp \phi_{n,g,\sigma}$. In particular,  
$\phi_\mathcal R(x_0,y) \neq 0 $ for some $\mathcal R$. Note that there are finitely many such $\mathcal R$, whose sizes are essentially 
the same, 
 denoted by $\Delta x \times \Delta y$.   
In particular, $|y-A|\sim |y-u_0|  = |y -\gamma(x_0)| \lesssim \Delta y$
and 
$$
|\partial_x \phi_{n,g,\sigma}(x_0,y)| \lesssim |\p_x \phi_\mathcal R(x_0,y)| \lesssim \Delta x^{-1}.
$$
Since $x_0\sim \sigma$ and 
$|(\gamma^{-1})'(u_0)| = |{\gamma'(x_0)}|^{-1}\sim  \sigma^{1-m_0}
$, the chain rule implies 
\begin{align*}
|\partial_u\psi_\sigma(u_0,y) | =  |\partial_x\phi_{n,g,\sigma} (x_0,y) | |(\gamma^{-1})' (u_0)| \lesssim \Delta x ^{-1} \sigma^{1-m_0} =\Delta y ^{-1}
\lesssim |y-A|^{-1},
\end{align*}
as desired.

\begin{bibdiv}
\begin{biblist}

\bib{CCW99}{article}{
      author={Carbery, Anthony},
      author={Christ, Michael},
      author={Wright, James},
       title={Multidimensional van der {C}orput and sublevel set estimates},
        date={1999},
        ISSN={0894-0347},
     journal={J. Amer. Math. Soc.},
      volume={12},
      number={4},
       pages={981\ndash 1015},
         url={http://dx.doi.org/10.1090/S0894-0347-99-00309-4},
      review={\MR{1683156 (2000h:42010)}},
}

\bib{CW02}{inproceedings}{
      author={Carbery, Anthony},
      author={Wright, James},
       title={What is van der {C}orput's lemma in higher dimensions?},
        date={2002},
   booktitle={Proceedings of the 6th {I}nternational {C}onference on {H}armonic
  {A}nalysis and {P}artial {D}ifferential {E}quations ({E}l {E}scorial, 2000)},
       pages={13\ndash 26},
         url={http://dx.doi.org/10.5565/PUBLMAT_Esco02_01},
      review={\MR{1964813 (2004a:42016)}},
}

\bib{CLTT05}{article}{
      author={Christ, Michael},
      author={Li, Xiaochun},
      author={Tao, Terence},
      author={Thiele, Christoph},
       title={On multilinear oscillatory integrals, nonsingular and singular},
        date={2005},
        ISSN={0012-7094},
     journal={Duke Math. J.},
      volume={130},
      number={2},
       pages={321\ndash 351},
}

\bib{CGP13}{article}{
      author={Collins, Tristan~C.},
      author={Greenleaf, Allan},
      author={Pramanik, Malabika},
       title={A multi-dimensional resolution of singularities with applications
  to analysis},
        date={2013},
        ISSN={0002-9327},
     journal={Amer. J. Math.},
      volume={135},
      number={5},
       pages={1179\ndash 1252},
         url={http://dx.doi.org/10.1353/ajm.2013.0042},
      review={\MR{3117305}},
}

\bib{FS72}{article}{
      author={Fefferman, C.},
      author={Stein, E.~M.},
       title={{$H^{p}$} spaces of several variables},
        date={1972},
        ISSN={0001-5962},
     journal={Acta Math.},
      volume={129},
      number={3-4},
       pages={137\ndash 193},
      review={\MR{0447953 (56 \#6263)}},
}

\bib{GR04}{article}{
      author={Greenblatt, Michael},
       title={A direct resolution of singularities for functions of two
  variables with applications to analysis},
        date={2004},
        ISSN={0021-7670},
     journal={J. Anal. Math.},
      volume={92},
       pages={233\ndash 257},
         url={http://dx.doi.org/10.1007/BF02787763},
      review={\MR{2072748 (2005f:42021)}},
}

\bib{GR05}{article}{
      author={Greenblatt, Michael},
       title={Sharp {$L^2$} estimates for one-dimensional oscillatory integral
  operators with {$C^\infty$} phase},
        date={2005},
        ISSN={0002-9327},
     journal={Amer. J. Math.},
      volume={127},
      number={3},
       pages={659\ndash 695},
  url={http://muse.jhu.edu/journals/american_journal_of_mathematics/v127/127.3greenblatt.pdf},
      review={\MR{2141648 (2006e:42028)}},
}

\bib{GRB11}{article}{
      author={Greenblatt, Michael},
       title={Resolution of singularities in two dimensions and the stability
  of integrals},
        date={2011},
        ISSN={0001-8708},
     journal={Advances in Mathematics},
      volume={226},
      number={2},
       pages={1772 \ndash  1802},
  url={http://www.sciencedirect.com/science/article/pii/S0001870810003245},
}

\bib{GS99}{article}{
      author={Greenleaf, Allan},
      author={Seeger, Andreas},
       title={On oscillatory integral operators with folding canonical
  relations},
        date={1999},
        ISSN={0039-3223},
     journal={Studia Math.},
      volume={132},
      number={2},
       pages={125\ndash 139},
      review={\MR{1669698 (2000g:58040)}},
}

\bib{GR8}{article}{
      author={Gressman, Philip~T},
       title={Uniform estimates for cubic oscillatory integrals},
        date={2008},
     journal={Indiana university mathematics journal},
      volume={57},
      number={7},
       pages={3419\ndash 3442},
}

\bib{GR11}{article}{
      author={Gressman, Philip~T},
       title={Uniform geometric estimates of sublevel sets},
        date={2011},
     journal={Journal d'Analyse Math{\'e}matique},
      volume={115},
      number={1},
       pages={251\ndash 272},
}

\bib{GX15}{article}{
      author={Gressman, Philip~T.},
      author={Xiao, Lechao},
       title={Maximal decay inequalities for trilinear oscillatory integrals of
  convolution type},
     journal={In preparation},
}

\bib{HI64}{article}{
      author={Hironaka, Heisuke},
       title={Resolution of singularities of an algebraic variety over a field
  of characteristic zero. {I}, {II}},
        date={1964},
        ISSN={0003-486X},
     journal={Ann. of Math. (2) 79 (1964), 109--203; ibid. (2)},
      volume={79},
       pages={205\ndash 326},
      review={\MR{0199184 (33 \#7333)}},
}

\bib{HOR73}{article}{
      author={H{\"o}rmander, Lars},
       title={Oscillatory integrals and multipliers on {$FL^{p}$}},
        date={1973},
        ISSN={0004-2080},
     journal={Ark. Mat.},
      volume={11},
       pages={1\ndash 11},
}

\bib{HP92}{article}{
      author={Hu, Yue},
      author={Pan, Yibiao},
       title={Boundedness of oscillatory singular integrals on {H}ardy spaces},
        date={1992},
        ISSN={0004-2080},
     journal={Ark. Mat.},
      volume={30},
      number={2},
       pages={311\ndash 320},
         url={http://dx.doi.org/10.1007/BF02384877},
      review={\MR{1289758 (97k:42031)}},
}

\bib{IM11}{article}{
      author={Ikromov, Isroil~A.},
      author={M{\"u}ller, Detlef},
       title={On adapted coordinate systems},
        date={2011},
        ISSN={0002-9947},
     journal={Trans. Amer. Math. Soc.},
      volume={363},
      number={6},
       pages={2821\ndash 2848},
         url={http://dx.doi.org/10.1090/S0002-9947-2011-04951-2},
      review={\MR{2775788 (2012g:58074)}},
}

\bib{KA86}{article}{
      author={Karpushkin, VN},
       title={A theorem concerning uniform estimates of oscillatory integrals
  when the phase is a function of two variables},
        date={1986},
     journal={Journal of Soviet Mathematics},
      volume={35},
      number={6},
       pages={2809\ndash 2826},
}

\bib{PAN91}{article}{
      author={Pan, Yibiao},
       title={Hardy spaces and oscillatory singular integrals},
        date={1991},
        ISSN={0213-2230},
     journal={Rev. Mat. Iberoamericana},
      volume={7},
      number={1},
       pages={55\ndash 64},
         url={http://dx.doi.org/10.4171/RMI/105},
      review={\MR{1109480 (92e:42009)}},
}

\bib{PAN95}{article}{
      author={Pan, Yibiao},
       title={Hardy spaces and oscillatory singular integrals. {II}},
        date={1995},
        ISSN={0030-8730},
     journal={Pacific J. Math.},
      volume={168},
      number={1},
       pages={167\ndash 182},
         url={http://projecteuclid.org/euclid.pjm/1102620681},
      review={\MR{1331997 (96c:42033)}},
}

\bib{PS86}{article}{
      author={Phong, D.~H.},
      author={Stein, E.~M.},
       title={Hilbert integrals, singular integrals, and {R}adon transforms.
  {I}},
        date={1986},
        ISSN={0001-5962},
     journal={Acta Math.},
      volume={157},
      number={1-2},
       pages={99\ndash 157},
         url={http://dx.doi.org/10.1007/BF02392592},
      review={\MR{857680 (88i:42028a)}},
}

\bib{PS94-2}{article}{
      author={Phong, D.~H.},
      author={Stein, E.~M.},
       title={Models of degenerate {F}ourier integral operators and {R}adon
  transforms},
        date={1994},
        ISSN={0003-486X},
     journal={Ann. of Math. (2)},
      volume={140},
      number={3},
       pages={703\ndash 722},
         url={http://dx.doi.org/10.2307/2118622},
      review={\MR{1307901 (96c:35206)}},
}

\bib{PS94-1}{article}{
      author={Phong, D.~H.},
      author={Stein, E.~M.},
       title={Operator versions of the van der {C}orput lemma and {F}ourier
  integral operators},
        date={1994},
        ISSN={1073-2780},
     journal={Math. Res. Lett.},
      volume={1},
      number={1},
       pages={27\ndash 33},
      review={\MR{1258486 (94k:35347)}},
}

\bib{PS97}{article}{
      author={Phong, D.~H.},
      author={Stein, E.~M.},
       title={The {N}ewton polyhedron and oscillatory integral operators},
        date={1997},
        ISSN={0001-5962},
     journal={Acta Math.},
      volume={179},
      number={1},
       pages={105\ndash 152},
         url={http://dx.doi.org/10.1007/BF02392721},
      review={\MR{1484770 (98j:42009)}},
}

\bib{PS97-2}{article}{
      author={Phong, D.~H.},
      author={Stein, E.~M.},
       title={Damped oscillatory integral operators with analytic phases},
        date={1998},
        ISSN={0001-8708},
     journal={Adv. Math.},
      volume={134},
      number={1},
       pages={146\ndash 177},
         url={http://dx.doi.org/10.1006/aima.1997.1704},
      review={\MR{1612395 (2000b:42009)}},
}

\bib{PSS99}{article}{
      author={Phong, D.~H.},
      author={Stein, E.~M.},
      author={Sturm, Jacob},
       title={On the growth and stability of real-analytic functions},
        date={1999},
        ISSN={00029327, 10806377},
     journal={American Journal of Mathematics},
      volume={121},
      number={3},
       pages={519\ndash 554},
         url={http://www.jstor.org/stable/25098936},
}

\bib{PSS01}{article}{
      author={Phong, D.~H.},
      author={Stein, E.~M.},
      author={Sturm, Jacob},
       title={Multilinear level set operators, oscillatory integral operators,
  and {N}ewton polyhedra},
        date={2001},
        ISSN={0025-5831},
     journal={Math. Ann.},
      volume={319},
      number={3},
       pages={573\ndash 596},
         url={http://dx.doi.org/10.1007/PL00004450},
      review={\MR{1819885 (2002f:42019)}},
}

\bib{PS00}{article}{
      author={Phong, D.~H.},
      author={Sturm, Jacob},
       title={Algebraic estimates, stability of local zeta functions, and
  uniform estimates for distribution functions},
        date={2000},
        ISSN={0003-486X},
     journal={Ann. of Math. (2)},
      volume={152},
      number={1},
       pages={277\ndash 329},
         url={http://dx.doi.org/10.2307/2661384},
      review={\MR{1792297 (2002f:11180)}},
}

\bib{RS86}{article}{
      author={Ricci, Fulvio},
      author={Stein, E.~M.},
       title={Harmonic analysis on nilpotent groups and singular integrals.
  {I}. {O}scillatory integrals},
        date={1987},
        ISSN={0022-1236},
     journal={J. Funct. Anal.},
      volume={73},
      number={1},
       pages={179\ndash 194},
         url={http://dx.doi.org/10.1016/0022-1236(87)90064-4},
      review={\MR{890662 (88g:42023)}},
}

\bib{RY01}{article}{
      author={Rychkov, Vyacheslav~S.},
       title={Sharp {$L^2$} bounds for oscillatory integral operators with
  {$C^\infty$} phases},
        date={2001},
        ISSN={0025-5874},
     journal={Math. Z.},
      volume={236},
      number={3},
       pages={461\ndash 489},
         url={http://dx.doi.org/10.1007/PL00004838},
}

\bib{SEE93}{article}{
      author={Seeger, Andreas},
       title={Degenerate {F}ourier integral operators in the plane},
        date={1993},
        ISSN={0012-7094},
     journal={Duke Math. J.},
      volume={71},
      number={3},
       pages={685\ndash 745},
         url={http://dx.doi.org/10.1215/S0012-7094-93-07127-X},
      review={\MR{1240601 (94h:35292)}},
}

\bib{SEE99}{article}{
      author={Seeger, Andreas},
       title={Radon transforms and finite type conditions},
        date={1998},
        ISSN={0894-0347},
     journal={J. Amer. Math. Soc.},
      volume={11},
      number={4},
       pages={869\ndash 897},
         url={http://dx.doi.org/10.1090/S0894-0347-98-00280-X},
}

\bib{SY14}{article}{
      author={Shi, Zuoshunhua},
      author={Yan, Dunyan},
       title={Sharp {$L^p$}-boundedness of oscillatory integral operators with
  polynomial phases},
     journal={arXiv preprint arXiv:1602.06123},
            
}

\bib{ST93}{book}{
      author={Stein, Elias~M},
       title={Harmonic analysis: real-variable methods, orthogonality, and
  oscillatory integrals},
   publisher={Princeton University Press},
        date={1993},
      volume={3},
}

\bib{VAR76}{article}{
      author={Var{\v{c}}enko, A.~N.},
       title={Newton polyhedra and estimates of oscillatory integrals},
        date={1976},
        ISSN={0374-1990},
     journal={Funkcional. Anal. i Prilo\v zen.},
      volume={10},
      number={3},
       pages={13\ndash 38},
      review={\MR{0422257 (54 \#10248)}},
}

\bib{X2013}{article}{
      author={Xiao, Lechao},
       title={Sharp estimates for trilinear oscillatory integrals and an
  algorithm of two-dimensional resolution of singularities},
        date={2013},
     journal={arXiv preprint arXiv:1311.3725},
}

\bib{Y04}{article}{
      author={Yang, Chan~Woo},
       title={Sharp {$L^p$} estimates for some oscillatory integral operators
  in {$\Bbb R^1$}},
        date={2004},
        ISSN={0019-2082},
     journal={Illinois J. Math.},
      volume={48},
      number={4},
       pages={1093\ndash 1103},
         url={http://projecteuclid.org/euclid.ijm/1258138501},
      review={\MR{2113667 (2005k:44007)}},
}

\bib{Y05}{article}{
      author={Yang, Chan~Woo},
       title={{$L^p$} improving estimates for some classes of {R}adon
  transforms},
        date={2005},
        ISSN={0002-9947},
     journal={Trans. Amer. Math. Soc.},
      volume={357},
      number={10},
       pages={3887\ndash 3903 (electronic)},
         url={http://dx.doi.org/10.1090/S0002-9947-05-03807-9},
      review={\MR{2159692 (2006g:44006)}},
}

\bib{ZY}{book}{
      author={Zygmund, Antoni},
       title={Trigonometrical series},
   publisher={Chelsea Publishing Co., New York},
        date={1952},
        note={2nd ed},
      review={\MR{0076084 (17,844d)}},
}

\end{biblist}
\end{bibdiv}

\end{document}